\newtheorem{theorem}{Theorem}
\newtheorem{corollary}{Corollary}
\newtheorem{definition}{Definition}
\newtheorem{lemma}{Lemma}
\newtheorem{proposition}{Proposition}
\newtheorem{remark}{Remark}
\newenvironment{proof}[1][Proof]{\textbf{#1.} }{\ \rule{0.5em}{0.5em}}
\begin{document}

\begin{center}
{\LARGE Linearized} {\LARGE Boltzmann Collision Operator: II. Polyatomic
Molecules Modeled by a }

{\LARGE Continuous Internal Energy Variable}\bigskip

{\large Niclas Bernhoff\smallskip }

Department of Mathematics, Karlstad University, 65188 Karlstad, Sweden

niclas.bernhoff@kau.se
\end{center}

\textbf{Abstract:}{\small \ The linearized collision operator of the
Boltzmann equation for single species can be written as a sum of a positive
multiplication operator, the collision frequency, and a compact integral
operator. This classical result has more recently, been extended to
multi-component mixtures and polyatomic single species with the
polyatomicity modeled by a discrete internal energy variable. In this work
we prove compactness of the integral operator for polyatomic single species,
with the polyatomicity modeled by a continuous internal energy variable, and
the number of internal degrees of freedom greater or equal to two. The terms
of the integral operator are shown to be, or be the uniform limit of,
Hilbert-Schmidt integral operators. Self-adjointness of the linearized
collision operator follows. Coercivity of the collision frequency are shown
for hard-sphere like and hard potential with cut-off like models, implying
Fredholmness of the linearized collision operator.}

\textbf{Keywords:}{\small \ Boltzmann equation, Polyatomic gases, Linearized
collision operator, Hilbert-Schmidt integral operator}

\textbf{MSC Classification:}{\small \ 82C40, 35Q20, 35Q70, 76P05, 47G10}

\section{Introduction\label{S1}}

The Boltzmann equation is a fundamental equation of kinetic theory of gases.
Considering deviations of an equilibrium, or Maxwellian, distribution, a
linearized collision operator is obtained. The linearized collision operator
can in a natural way be written as a sum of a positive multiplication
operator, the collision frequency, and an integral operator $-K$. Compact
properties of the integral operator $K$ (for angular cut-off kernels) are
extensively studied for monatomic single species, see e.g. \cite{Gr-63,
Dr-75, Cercignani-88, LS-10}. The integral operator can be written as the
sum of two compact integral operators, in the form of a Hilbert-Schmidt
integral operator and an approximately Hilbert-Schmidt integral operator,
i.e. an operator, which is the uniform limit of Hilbert-Schmidt integral
operators (cf. Lemma $\ref{LGD}$ in Section $\ref{PT1}$) \cite{Glassey}, and
so compactness of the integral operator $K$ follows. More recently,
compactness results were also obtained for monatomic multi-component
mixtures \cite{BGPS-13}, see also \cite{Be-21a} for a different approach,
and for polyatomic single species, where the polyatomicity is modeled by a
discrete internal energy variable \cite{Be-21a}. In this work, we consider
polyatomic single species, where the polyatomicity is modeled by a
continuous internal energy variable \cite{BDLP-94, BBBD-18, GP-20}. We
restrict ourselves to the physical case when the number of internal degrees
of freedom is greater or equal to two. The compactness property in the case
when the molecules is restricted to undergo resonant collisions \cite{BRS-22}%
, i.e. collisions where the sum of the internal energies is conserved under
the collision, is recently considered in \cite{BBS-22}.

Motivated by an approach by Kogan in \cite[Sect. 2.8]{Kogan} for the
monatomic single species case, a probabilistic formulation of the collision
operator is considered as the starting point. With this approach, cf. \cite%
{Be-21a}, it is shown, based on modified arguments in the monatomic case,
that the integral operator $K$ can be written as a sum of a Hilbert-Schmidt
integral operator and an operator, which is the uniform limit of
Hilbert-Schmidt integral operators (and even might be an Hilbert-Schmidt
integral operator itself) - and so compactness of the integral operator $K$
follows. The operator $K$ is self-adjoint, as well as the collision
frequency, why the linearized collision operator as the sum of two
self-adjoint operators of which one is bounded, is also self-adjoint.

For hard sphere like models and hard potential with cut-off like models,
bounds on the collision frequency are obtained. Then the collision frequency
is coercive and becomes a Fredholm operator. The set of Fredholm operators
is closed under addition with compact operators, why also the linearized
collision operator becomes a Fredholm operator by the compactness of the
integral operator $K$. For hard sphere like models, as well as, "super hard"
potential like models, the linearized collision operator satisfies all the
properties of the general linear operator in the abstract half-space problem
considered in \cite{Be-21}.

The rest of the paper is organized as follows. In Section $\ref{S2}$, the
model considered is presented. The probabilistic formulation of the
collision operator considered and its relation to a more classical
formulation \cite{BDLP-94, BBBD-18, GP-20} is accounted for in Section $\ref%
{S2.1}$ \ Some classical results for the collision operator in Section $\ref%
{S2.2}$ and the linearized collision operator in Section $\ref{S2.3}$ are
reviewed. Section $\ref{S3}$ is devoted to the main results of this paper. A
proof of compactness of the integral operator is presented in Section $\ref%
{PT1}$, while a proof of the bounds on the collision frequency appears in
Section $\ref{PT2}$,

\section{Kinetic model\label{S2}}

In this section the model considered is presented. A probabilistic
formulation of the collision operator, cf. \cite{Kogan, SNB-85, BPS-90,
Be-21a}, is considered, whose relation to a more classical formulation is
accounted for. Known properties of the model and corresponding linearized
collision operator are also reviewed.

Consider a single species of polyatomic molecules with mass $m$, where the
polyatomicity is modeled by an internal energy variable $I\in $ $\mathbb{R}%
_{+}$. The distribution functions are nonnegative functions of the form $%
f=f\left( t,\mathbf{x},\boldsymbol{\xi },I\right) $, with $t\in \mathbb{R}%
_{+}$, $\mathbf{x}=\left( x,y,z\right) \in \mathbb{R}^{3}$, and $\boldsymbol{%
\xi }=\left( \xi _{x},\xi _{y},\xi _{z}\right) \in \mathbb{R}^{3}$.

Moreover, consider the real Hilbert space $\mathcal{\mathfrak{h:}}%
=L^{2}\left( d\boldsymbol{\xi \,}\mathbf{\,}dI\right) $, with inner product%
\begin{equation*}
\left( f,g\right) =\int_{\mathbb{R}^{3}\times \mathbb{R}_{+}}fg\,d%
\boldsymbol{\xi \,}dI\text{ for }f,g\in L^{2}\left( d\boldsymbol{\xi \,}%
\mathbf{\,}dI\right) \text{.}
\end{equation*}

The evolution of the distribution functions is (in the absence of external
forces) described by the Boltzmann equation%
\begin{equation}
\frac{\partial f}{\partial t}+\left( \boldsymbol{\xi }\cdot \nabla _{\mathbf{%
x}}\right) f=Q\left( f,f\right) \text{,}  \label{BE1}
\end{equation}%
where the collision operator $Q=Q\left( f,f\right) $ is a quadratic bilinear
operator that accounts for the change of velocities and internal energies of
particles due to binary collisions (assuming that the gas is rarefied, such
that other collisions are negligible).

\subsection{Collision operator\label{S2.1}}

The collision operator in the Boltzmann equation $\left( \ref{BE1}\right) $
for polyatomic molecules can be written in the form 
\begin{eqnarray*}
Q(f,f) &=&\int_{\left( \mathbb{R}^{3}\times \mathbb{R}_{+}\right) ^{3}}W(%
\boldsymbol{\xi },\boldsymbol{\xi }_{\ast },I,I_{\ast }\left\vert 
\boldsymbol{\xi }^{\prime },\boldsymbol{\xi }_{\ast }^{\prime },I^{\prime
},I_{\ast }^{\prime }\right. ) \\
&&\times \left( \frac{f^{\prime }f_{\ast }^{\prime }}{\left( I^{\prime
}I_{\ast }^{\prime }\right) ^{\delta /2-1}}-\frac{ff_{\ast }}{\left(
II_{\ast }\right) ^{\delta /2-1}}\right) \,d\boldsymbol{\xi }_{\ast }d%
\boldsymbol{\xi }^{\prime }d\boldsymbol{\xi }_{\ast }^{\prime }dI_{\ast
}dI^{\prime }dI_{\ast }^{\prime }\text{.}
\end{eqnarray*}%
Here and below the abbreviations%
\begin{equation}
f_{\ast }=f\left( t,\mathbf{x},\boldsymbol{\xi }_{\ast },I_{\ast }\right) 
\text{, }f^{\prime }=f\left( t,\mathbf{x},\boldsymbol{\xi }^{\prime
},I^{\prime }\right) \text{, and }f_{\ast }^{\prime }=f\left( t,\mathbf{x},%
\boldsymbol{\xi }_{\ast }^{\prime },I_{\ast }^{\prime }\right)  \label{a1}
\end{equation}%
are used, and $\delta $, with $\delta \geq 2$, denote the number of internal
degrees of freedom.

The transition probabilities $W(\boldsymbol{\xi },\boldsymbol{\xi }_{\ast
},I,I_{\ast }\left\vert \boldsymbol{\xi }^{\prime },\boldsymbol{\xi }_{\ast
}^{\prime },I^{\prime },I_{\ast }^{\prime }\right. )$ are of the form, cf. 
\cite{Kogan, SNB-85, BPS-90} for the monatomic case,%
\begin{eqnarray}
&&W(\boldsymbol{\xi },\boldsymbol{\xi }_{\ast },I,I_{\ast }\left\vert 
\boldsymbol{\xi }^{\prime },\boldsymbol{\xi }_{\ast }^{\prime },I^{\prime
},I_{\ast }^{\prime }\right. )  \notag \\
&=&4m\left( I^{\prime }I_{\ast }^{\prime }\right) ^{\delta /2-1}\sigma
^{\prime }\frac{\left\vert \mathbf{g}^{\prime }\right\vert }{\left\vert 
\mathbf{g}\right\vert }\delta _{3}\left( \boldsymbol{\xi }+\boldsymbol{\xi }%
_{\ast }-\boldsymbol{\xi }^{\prime }-\boldsymbol{\xi }_{\ast }^{\prime
}\right)  \notag \\
&&\times \delta _{1}\left( \frac{m}{2}\left( \left\vert \boldsymbol{\xi }%
\right\vert ^{2}+\left\vert \boldsymbol{\xi }_{\ast }\right\vert
^{2}-\left\vert \boldsymbol{\xi }^{\prime }\right\vert ^{2}-\left\vert 
\boldsymbol{\xi }_{\ast }^{\prime }\right\vert ^{2}\right) -\Delta I\right) 
\notag \\
&=&4m\left( II_{\ast }\right) ^{\delta /2-1}\sigma \frac{\left\vert \mathbf{g%
}\right\vert }{\left\vert \mathbf{g}^{\prime }\right\vert }\delta _{3}\left( 
\boldsymbol{\xi }+\boldsymbol{\xi }_{\ast }-\boldsymbol{\xi }^{\prime }-%
\boldsymbol{\xi }_{\ast }^{\prime }\right)  \notag \\
&&\times \delta _{1}\left( \frac{m}{2}\left( \left\vert \boldsymbol{\xi }%
\right\vert ^{2}+\left\vert \boldsymbol{\xi }_{\ast }\right\vert
^{2}-\left\vert \boldsymbol{\xi }^{\prime }\right\vert ^{2}-\left\vert 
\boldsymbol{\xi }_{\ast }^{\prime }\right\vert ^{2}\right) -\Delta I\right) 
\text{, }  \notag \\
&&\text{where }\sigma ^{\prime }=\sigma \left( \left\vert \mathbf{g}^{\prime
}\right\vert ,\left\vert \cos \theta \right\vert ,I^{\prime },I_{\ast
}^{\prime },I,I_{\ast }\right) >0\text{ and}  \notag \\
&&\sigma =\sigma \left( \left\vert \mathbf{g}\right\vert ,\left\vert \cos
\theta \right\vert ,I,I_{\ast },I^{\prime },I_{\ast }^{\prime }\right) >0%
\text{ \ a.e., with }\cos \theta =\frac{\mathbf{g}\cdot \mathbf{g}^{\prime }%
}{\left\vert \mathbf{g}\right\vert \left\vert \mathbf{g}^{\prime
}\right\vert }\text{,}  \notag \\
&&\text{ }\mathbf{g}=\boldsymbol{\xi }-\boldsymbol{\xi }_{\ast }\text{, }%
\mathbf{g}^{\prime }=\boldsymbol{\xi }^{\prime }-\boldsymbol{\xi }_{\ast
}^{\prime }\text{, and }\Delta I=I^{\prime }+I_{\ast }^{\prime }-I-I_{\ast }%
\text{,}  \label{tp}
\end{eqnarray}%
where $\delta _{3}$ and $\delta _{1}$ denote the Dirac's delta function in $%
\mathbb{R}^{3}$ and $\mathbb{R}$, respectively; taking the conservation of
momentum and total energy into account. Here it is assumed that the
scattering cross sections $\sigma $ satisfy the microreversibility condition%
\begin{eqnarray}
&&\left( II_{\ast }\right) ^{\delta /2-1}\left\vert \mathbf{g}\right\vert
^{2}\sigma \left( \left\vert \mathbf{g}\right\vert ,\left\vert \cos \theta
\right\vert ,I,I_{\ast },I^{\prime },I_{\ast }^{\prime }\right)  \notag \\
&=&\left( I^{\prime }I_{\ast }^{\prime }\right) ^{\delta /2-1}\left\vert 
\mathbf{g}^{\prime }\right\vert ^{2}\sigma \left( \left\vert \mathbf{g}%
^{\prime }\right\vert ,\left\vert \cos \theta \right\vert ,I^{\prime
},I_{\ast }^{\prime },I,I_{\ast }\right) \text{.}  \label{mr}
\end{eqnarray}%
Furthermore, to have invariance of change of particles in a collision, it is
assumed that the scattering cross sections $\sigma $ satisfy the symmetry
relations%
\begin{eqnarray}
\sigma \left( \left\vert \mathbf{g}\right\vert ,\left\vert \cos \theta
\right\vert ,I,I_{\ast },I^{\prime },I_{\ast }^{\prime }\right) &=&\sigma
\left( \left\vert \mathbf{g}\right\vert ,\left\vert \cos \theta \right\vert
,I,I_{\ast },I_{\ast }^{\prime },I^{\prime }\right)  \notag \\
&=&\sigma \left( \left\vert \mathbf{g}\right\vert ,\left\vert \cos \theta
\right\vert ,I_{\ast },I,I_{\ast }^{\prime },I^{\prime }\right) \text{.}
\label{sr}
\end{eqnarray}%
The invariance under change of particles in a collision, which follows by
the definition of the transition probability $\left( \ref{tp}\right) $ and
the symmetry relations $\left( \ref{sr}\right) $ for the collision
frequency, and the microreversibility of the collisions $\left( \ref{mr}%
\right) $, imply that the transition probabilities $\left( \ref{tp}\right) $
satisfy the relations%
\begin{eqnarray}
W(\boldsymbol{\xi },\boldsymbol{\xi }_{\ast },I,I_{\ast }\left\vert 
\boldsymbol{\xi }^{\prime },\boldsymbol{\xi }_{\ast }^{\prime },I^{\prime
},I_{\ast }^{\prime }\right. ) &=&W(\boldsymbol{\xi }_{\ast },\boldsymbol{%
\xi },I_{\ast },I\left\vert \boldsymbol{\xi }_{\ast }^{\prime },\boldsymbol{%
\xi }^{\prime },I_{\ast }^{\prime },I^{\prime }\right. )  \notag \\
W(\boldsymbol{\xi },\boldsymbol{\xi }_{\ast },I,I_{\ast }\left\vert 
\boldsymbol{\xi }^{\prime },\boldsymbol{\xi }_{\ast }^{\prime },I^{\prime
},I_{\ast }^{\prime }\right. ) &=&W(\boldsymbol{\xi }^{\prime },\boldsymbol{%
\xi }_{\ast }^{\prime },I^{\prime },I_{\ast }^{\prime }\left\vert 
\boldsymbol{\xi },\boldsymbol{\xi }_{\ast },I,I_{\ast }\right. )  \notag \\
W(\boldsymbol{\xi },\boldsymbol{\xi }_{\ast },I,I_{\ast }\left\vert 
\boldsymbol{\xi }^{\prime },\boldsymbol{\xi }_{\ast }^{\prime },I^{\prime
},I_{\ast }^{\prime }\right. ) &=&W(\boldsymbol{\xi },\boldsymbol{\xi }%
_{\ast },I,I_{\ast }\left\vert \boldsymbol{\xi }_{\ast }^{\prime },%
\boldsymbol{\xi }^{\prime },I_{\ast }^{\prime },I^{\prime }\right. )\text{.}
\label{rel1}
\end{eqnarray}%
Applying known properties of Dirac's delta function, the transition
probabilities may be transformed to 
\begin{align*}
& W(\boldsymbol{\xi },\boldsymbol{\xi }_{\ast },I,I_{\ast }\left\vert 
\boldsymbol{\xi }^{\prime },\boldsymbol{\xi }_{\ast }^{\prime },I^{\prime
},I_{\ast }^{\prime }\right. ) \\
=& \frac{m}{2}\left( I^{\prime }I_{\ast }^{\prime }\right) ^{\delta
/2-1}\sigma ^{\prime }\frac{\left\vert \mathbf{g}^{\prime }\right\vert }{%
\left\vert \mathbf{g}\right\vert }\delta _{3}\left( \mathbf{G}-\mathbf{G}%
^{\prime }\right) \delta _{1}\left( \frac{m}{4}\left( \left\vert \mathbf{g}%
\right\vert ^{2}-\left\vert \mathbf{g}^{\prime }\right\vert ^{2}\right)
-\Delta I\right) \\
=\,& \frac{m}{2}\left( I^{\prime }I_{\ast }^{\prime }\right) ^{\delta
/2-1}\sigma ^{\prime }\frac{\left\vert \mathbf{g}^{\prime }\right\vert }{%
\left\vert \mathbf{g}\right\vert }\delta _{3}\left( \mathbf{G}-\mathbf{G}%
^{\prime }\right) \delta _{1}\left( E-E^{\prime }\right) \\
=& \,\frac{\left( I^{\prime }I_{\ast }^{\prime }\right) ^{\delta /2-1}}{%
\left\vert \mathbf{g}\right\vert }\sigma ^{\prime }\delta _{3}\left( \mathbf{%
G}-\mathbf{G}^{\prime }\right) \delta _{1}\left( \sqrt{\left\vert \mathbf{g}%
\right\vert ^{2}-\frac{4}{m}\Delta I}-\left\vert \mathbf{g}^{\prime
}\right\vert \right) \mathbf{1}_{m\left\vert \mathbf{g}\right\vert
^{2}>4\Delta I} \\
=& \left( II_{\ast }\right) ^{\delta /2-1}\sigma \frac{\left\vert \mathbf{g}%
\right\vert }{\left\vert \mathbf{g}^{\prime }\right\vert ^{2}}\delta
_{3}\left( \mathbf{G}-\mathbf{G}^{\prime }\right) \delta _{1}\left( \sqrt{%
\left\vert \mathbf{g}\right\vert ^{2}-\frac{4}{m}\Delta I}-\left\vert 
\mathbf{g}^{\prime }\right\vert \right) \mathbf{1}_{m\left\vert \mathbf{g}%
\right\vert ^{2}>4\Delta I}\text{, with} \\
& \mathbf{G}=\frac{\boldsymbol{\xi }+\boldsymbol{\xi }_{\ast }}{2},\text{ }%
\mathbf{G}^{\prime }=\frac{\boldsymbol{\xi }^{\prime }+\boldsymbol{\xi }%
_{\ast }^{\prime }}{2}\text{, }E=\frac{m}{4}\left\vert \mathbf{g}\right\vert
^{2}+I+I_{\ast }\text{,}\ E^{\prime }=\frac{m}{4}\left\vert \mathbf{g}%
^{\prime }\right\vert ^{2}+I^{\prime }+I_{\ast }^{\prime }\text{,}
\end{align*}

A series of change of variables: first $\left\{ \boldsymbol{\xi }^{\prime },%
\boldsymbol{\xi }_{\ast }^{\prime }\right\} \rightarrow \!\left\{ \mathbf{g}%
^{\prime }=\boldsymbol{\xi }^{\prime }-\boldsymbol{\xi }_{\ast }^{\prime }%
\text{,}\mathbf{G}^{\prime }=\dfrac{\boldsymbol{\xi }^{\prime }+\boldsymbol{%
\xi }_{\ast }^{\prime }}{2}\!\right\} \!$, followed by a change to spherical
coordinates $\left\{ \mathbf{g}^{\prime }\right\} \rightarrow \left\{
\left\vert \mathbf{g}^{\prime }\right\vert ,\boldsymbol{\omega \,}=\dfrac{%
\mathbf{g}^{\prime }}{\left\vert \mathbf{g}^{\prime }\right\vert }\right\} $%
, and then $\left\{ \left\vert \mathbf{g}^{\prime }\right\vert ,I^{\prime
},I_{\ast }^{\prime }\right\} \rightarrow \left\{ R=\dfrac{m\left\vert 
\mathbf{g}^{\prime }\right\vert ^{2}}{4E},r=\dfrac{I^{\prime }}{(1-R)E}%
,E^{\prime }=\dfrac{m}{4}\left\vert \mathbf{g}^{\prime }\right\vert
^{2}+I^{\prime }+I_{\ast }^{\prime }\right\} $, observing that%
\begin{eqnarray}
&&d\boldsymbol{\xi }^{\prime }d\boldsymbol{\xi }_{\ast }^{\prime }dI^{\prime
}dI_{\ast }^{\prime }=d\mathbf{g}^{\prime }d\mathbf{G}^{\prime }dI^{\prime
}dI_{\ast }^{\prime }=\left\vert \mathbf{g}^{\prime }\right\vert
^{2}d\left\vert \mathbf{g}^{\prime }\right\vert d\boldsymbol{\omega \,}d%
\mathbf{G}^{\prime }dI^{\prime }dI_{\ast }^{\prime }  \notag \\
&=&\frac{4}{m^{3/2}}E^{5/2}(1-R)R^{1/2}dRd\boldsymbol{\omega \,}d\mathbf{G}%
^{\prime }\boldsymbol{\,}dr\boldsymbol{\,}dE^{\prime }  \notag \\
&=&\frac{4E^{\delta +1/2}}{m^{3/2}\left( I^{\prime }I_{\ast }^{\prime
}\right) ^{\delta /2-1}}\left( r(1-r)\right) ^{\delta /2-1}\left( 1-R\right)
^{\delta -1}R^{1/2}\boldsymbol{\,}dE^{\prime }d\mathbf{G}^{\prime }d%
\boldsymbol{\omega \,}dr\boldsymbol{\,}dR\text{,}  \notag \\
&&\text{ where }I^{\prime }=r\left( 1-R\right) E\text{ and }I_{\ast
}^{\prime }=\left( 1-r\right) \left( 1-R\right) E\text{,}  \label{df1}
\end{eqnarray}%
results in a more familiar form of the Boltzmann collision operator for
polyatomic molecules modeled with a continuous internal energy variable \cite%
{BDLP-94, BBBD-18, GP-20} 
\begin{eqnarray*}
Q(f,f) &=&\int_{\left( \mathbb{R}^{3}\right) ^{2}\times \left( \mathbb{R}%
_{+}\right) ^{2}\times \lbrack 0,1]^{2}\mathbb{\times S}^{2}}W(\boldsymbol{%
\xi },\boldsymbol{\xi }_{\ast },I,I_{\ast }\left\vert \boldsymbol{\xi }%
^{\prime },\boldsymbol{\xi }_{\ast }^{\prime },I^{\prime },I_{\ast }^{\prime
}\right. ) \\
&&\times \frac{4E^{\delta +1/2}}{m^{3/2}\left( I^{\prime }I_{\ast }^{\prime
}\right) ^{\delta /2-1}}\left( \frac{f^{\prime }f_{\ast }^{\prime }}{\left(
I^{\prime }I_{\ast }^{\prime }\right) ^{\delta /2-1}}-\frac{ff_{\ast }}{%
\left( II_{\ast }\right) ^{\delta /2-1}}\right) \\
&&\times \left( r(1-r)\right) ^{\delta /2-1}\left( 1-R\right) ^{\delta
-1}R^{1/2}\boldsymbol{\,}dE^{\prime }d\mathbf{G}^{\prime }d\boldsymbol{%
\omega \,}dr\boldsymbol{\,}dR\boldsymbol{\,}d\boldsymbol{\xi }_{\ast }%
\boldsymbol{\,}dI_{\ast } \\
&=&\int_{\mathbb{R}^{3}\times \mathbb{R}_{+}\times \lbrack 0,1]^{2}\mathbb{%
\times S}^{2}}B\left( \frac{f^{\prime }f_{\ast }^{\prime }}{\left( I^{\prime
}I_{\ast }^{\prime }\right) ^{\delta /2-1}}-\frac{ff_{\ast }}{\left(
II_{\ast }\right) ^{\delta /2-1}}\right) \\
&&\times \left( r(1-r)\right) ^{\delta /2-1}\left( 1-R\right) ^{\delta
-1}R^{1/2}\left( II_{\ast }\right) ^{\delta /2-1}d\boldsymbol{\omega \,}dr%
\boldsymbol{\,}dR\boldsymbol{\,}d\boldsymbol{\xi }_{\ast }\boldsymbol{\,}%
dI_{\ast },
\end{eqnarray*}%
with the collision kernel 
\begin{eqnarray}
B &=&\frac{2\sigma \left\vert \mathbf{g}\right\vert E^{\delta +1/2}\mathbf{1}%
_{m\left\vert \mathbf{g}\right\vert ^{2}>4\Delta I}}{\sqrt{m}\sqrt{%
\left\vert \mathbf{g}\right\vert ^{2}-\frac{4}{m}\Delta I}\left( I^{\prime
}I_{\ast }^{\prime }\right) ^{\delta /2-1}}=\frac{\sigma \left\vert \mathbf{g%
}\right\vert E^{2}\mathbf{1}_{E>0}}{\left( 1-R\right) ^{\delta
-2}R^{1/2}\left( r(1-r)\right) ^{\delta /2-1}}\text{, where}  \notag \\
E &=&\frac{m\left\vert \mathbf{g}\right\vert ^{2}}{4}-\Delta I\text{ and }%
\Delta I=I^{\prime }+I_{\ast }^{\prime }-I-I_{\ast }\text{,}  \label{ck1}
\end{eqnarray}%
and%
\begin{equation*}
\left\{ 
\begin{array}{l}
\boldsymbol{\xi }^{\prime }=\dfrac{\boldsymbol{\xi }+\boldsymbol{\xi }_{\ast
}}{2}+\dfrac{\sqrt{\left\vert \boldsymbol{\xi }-\boldsymbol{\xi }_{\ast
}\right\vert ^{2}-\dfrac{4}{m}\Delta I}}{2}\omega =\mathbf{G}+\dfrac{\sqrt{%
\left\vert \mathbf{g}\right\vert ^{2}-\dfrac{4}{m}\Delta I}}{2}\omega
\medskip \\ 
\boldsymbol{\xi }_{\ast }^{\prime }=\dfrac{\boldsymbol{\xi }+\boldsymbol{\xi 
}_{\ast }}{2}-\dfrac{\sqrt{\left\vert \boldsymbol{\xi }-\boldsymbol{\xi }%
_{\ast }\right\vert ^{2}-\dfrac{4}{m}\Delta I}}{2}\omega =\mathbf{G}-\dfrac{%
\sqrt{\left\vert \mathbf{g}\right\vert ^{2}-\dfrac{4}{m}\Delta I}}{2}\omega%
\end{array}%
\right. \text{, }\omega \in S^{2}\text{.}
\end{equation*}

\begin{remark}
\label{RRC}By multiplying the transition probability $\left( \ref{tp}\right) 
$ with an extra Dirac's delta function in $\mathbb{R}$, namely%
\begin{equation*}
\delta _{1}\left( \Delta I\right)
\end{equation*}%
we obtain the case where the molecules are assumed to undergo resonant
collisions \cite{BRS-22}.
\end{remark}

\subsection{Collision invariants and Maxwellian distributions\label{S2.2}}

The following lemma follows directly by the relations $\left( \ref{rel1}%
\right) $.

\begin{lemma}
\label{L0}The measure 
\begin{equation*}
dA=W(\boldsymbol{\xi },\boldsymbol{\xi }_{\ast },I,I_{\ast }\left\vert 
\boldsymbol{\xi }^{\prime },\boldsymbol{\xi }_{\ast }^{\prime },I^{\prime
},I_{\ast }^{\prime }\right. )\,d\boldsymbol{\xi }\,d\boldsymbol{\xi }_{\ast
}d\boldsymbol{\xi }^{\prime }d\boldsymbol{\xi }_{\ast }^{\prime }dIdI_{\ast
}dI^{\prime }dI_{\ast }^{\prime }
\end{equation*}%
is invariant under the interchanges of variables%
\begin{eqnarray}
\left( \boldsymbol{\xi },\boldsymbol{\xi }_{\ast },I,I_{\ast }\right)
&\leftrightarrow &\left( \boldsymbol{\xi }^{\prime },\boldsymbol{\xi }_{\ast
}^{\prime },I^{\prime },I_{\ast }^{\prime }\right) \text{,}  \notag \\
\left( \boldsymbol{\xi },I\right) &\leftrightarrow &\left( \boldsymbol{\xi }%
_{\ast },I_{\ast }\right) \text{, and}  \notag \\
\left( \boldsymbol{\xi }^{\prime },I^{\prime }\right) &\leftrightarrow
&\left( \boldsymbol{\xi }_{\ast }^{\prime },I_{\ast }^{\prime }\right) \text{%
,}  \label{tr1}
\end{eqnarray}%
respectively.
\end{lemma}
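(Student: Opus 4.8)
The plan is to reduce everything to the symmetry relations $(\ref{rel1})$ together with the trivial observation that the eightfold Lebesgue measure $d\boldsymbol{\xi}\,d\boldsymbol{\xi}_{\ast}\,d\boldsymbol{\xi}^{\prime}\,d\boldsymbol{\xi}_{\ast}^{\prime}\,dI\,dI_{\ast}\,dI^{\prime}\,dI_{\ast}^{\prime}$ is invariant under every permutation of its integration variables (it is a product of one-dimensional Lebesgue measures, so relabeling the variables changes nothing). Hence, for each of the three substitutions in $(\ref{tr1})$, it suffices to verify that the density $W$ is carried to itself.

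First I would treat the substitution $(\boldsymbol{\xi},\boldsymbol{\xi}_{\ast},I,I_{\ast})\leftrightarrow(\boldsymbol{\xi}^{\prime},\boldsymbol{\xi}_{\ast}^{\prime},I^{\prime},I_{\ast}^{\prime})$: after relabeling, the density in $dA$ becomes $W(\boldsymbol{\xi}^{\prime},\boldsymbol{\xi}_{\ast}^{\prime},I^{\prime},I_{\ast}^{\prime}\mid\boldsymbol{\xi},\boldsymbol{\xi}_{\ast},I,I_{\ast})$, which by the second identity in $(\ref{rel1})$ (the microreversibility relation) coincides with $W(\boldsymbol{\xi},\boldsymbol{\xi}_{\ast},I,I_{\ast}\mid\boldsymbol{\xi}^{\prime},\boldsymbol{\xi}_{\ast}^{\prime},I^{\prime},I_{\ast}^{\prime})$; thus $dA$ is unchanged. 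Next, for $(\boldsymbol{\xi}^{\prime},I^{\prime})\leftrightarrow(\boldsymbol{\xi}_{\ast}^{\prime},I_{\ast}^{\prime})$ the density becomes $W(\boldsymbol{\xi},\boldsymbol{\xi}_{\ast},I,I_{\ast}\mid\boldsymbol{\xi}_{\ast}^{\prime},\boldsymbol{\xi}^{\prime},I_{\ast}^{\prime},I^{\prime})$, which equals the original by the third identity in $(\ref{rel1})$. Finally, for $(\boldsymbol{\xi},I)\leftrightarrow(\boldsymbol{\xi}_{\ast},I_{\ast})$ the density becomes $W(\boldsymbol{\xi}_{\ast},\boldsymbol{\xi},I_{\ast},I\mid\boldsymbol{\xi}^{\prime},\boldsymbol{\xi}_{\ast}^{\prime},I^{\prime},I_{\ast}^{\prime})$; here I would use the first identity of $(\ref{rel1})$ to write $W(\boldsymbol{\xi},\boldsymbol{\xi}_{\ast},I,I_{\ast}\mid\boldsymbol{\xi}^{\prime},\boldsymbol{\xi}_{\ast}^{\prime},I^{\prime},I_{\ast}^{\prime})=W(\boldsymbol{\xi}_{\ast},\boldsymbol{\xi},I_{\ast},I\mid\boldsymbol{\xi}_{\ast}^{\prime},\boldsymbol{\xi}^{\prime},I_{\ast}^{\prime},I^{\prime})$ and then the third identity to restore the order of the primed arguments, giving exactly the density after relabeling. (Equivalently, one notes from $(\ref{tp})$ that $\left\vert\mathbf{g}\right\vert$, $\left\vert\mathbf{g}^{\prime}\right\vert$, $\mathbf{G}$, $\Delta I$ and the two Dirac factors are manifestly symmetric under $(\boldsymbol{\xi},I)\leftrightarrow(\boldsymbol{\xi}_{\ast},I_{\ast})$, while the symmetry relations $(\ref{sr})$ handle $\sigma$.)

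I do not expect any genuine obstacle: the lemma is essentially bookkeeping. The only point requiring a little care is that the invariance under $(\boldsymbol{\xi},I)\leftrightarrow(\boldsymbol{\xi}_{\ast},I_{\ast})$ (and likewise its primed counterpart) is not literally one of the displayed identities in $(\ref{rel1})$ but a one-line composition of two of them, so one must make sure the primed and unprimed factors of the Lebesgue measure are permuted consistently with the rearrangement of the arguments of $W$.
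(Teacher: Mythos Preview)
Your proposal is correct and takes exactly the approach the paper indicates: the paper simply states that the lemma ``follows directly by the relations $(\ref{rel1})$,'' and you have spelled out precisely how each of the three interchanges is handled by those relations (together with the obvious permutation-invariance of the product Lebesgue measure). Your observation that the invariance under $(\boldsymbol{\xi},I)\leftrightarrow(\boldsymbol{\xi}_{\ast},I_{\ast})$ requires composing the first and third identities of $(\ref{rel1})$ is the only nontrivial bookkeeping step, and you have handled it correctly.
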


The weak form of the collision operator $Q(f,f)$ reads%
\begin{eqnarray*}
\left( Q(f,f),g\right) &=&\int_{\left( \mathbb{R}^{3}\times \mathbb{R}%
_{+}\right) ^{4}}\left( \frac{f^{\prime }f_{\ast }^{\prime }}{\left(
I^{\prime }I_{\ast }^{\prime }\right) ^{\delta /2-1}}-\frac{ff_{\ast }}{%
\left( II_{\ast }\right) ^{\delta /2-1}}\right) g\,dA \\
&=&\int_{\left( \mathbb{R}^{3}\times \mathbb{R}_{+}\right) ^{4}}\left( \frac{%
f^{\prime }f_{\ast }^{\prime }}{\left( I^{\prime }I_{\ast }^{\prime }\right)
^{\delta /2-1}}-\frac{ff_{\ast }}{\left( II_{\ast }\right) ^{\delta /2-1}}%
\right) g_{\ast }\,dA \\
&=&-\int_{\left( \mathbb{R}^{3}\times \mathbb{R}_{+}\right) ^{4}}\left( 
\frac{f^{\prime }f_{\ast }^{\prime }}{\left( I^{\prime }I_{\ast }^{\prime
}\right) ^{\delta /2-1}}-\frac{ff_{\ast }}{\left( II_{\ast }\right) ^{\delta
/2-1}}\right) g^{\prime }\,dA \\
&=&-\int_{\left( \mathbb{R}^{3}\times \mathbb{R}_{+}\right) ^{4}}\left( 
\frac{f^{\prime }f_{\ast }^{\prime }}{\left( I^{\prime }I_{\ast }^{\prime
}\right) ^{\delta /2-1}}-\frac{ff_{\ast }}{\left( II_{\ast }\right) ^{\delta
/2-1}}\right) g_{\ast }^{\prime }\,dA,
\end{eqnarray*}%
for $g=g\left( \boldsymbol{\xi },I\right) $, such that the first integral is
defined, while the following integrals are obtained by applying Lemma $\ref%
{L0}$.

We have the following proposition.

\begin{proposition}
\label{P1}Let $g=g\left( \boldsymbol{\xi },I\right) $ be such that%
\begin{equation*}
\int_{\left( \mathbb{R}^{3}\times \mathbb{R}_{+}\right) ^{4}}\left( \frac{%
f^{\prime }f_{\ast }^{\prime }}{\left( I^{\prime }I_{\ast }^{\prime }\right)
^{\delta /2-1}}-\frac{ff_{\ast }}{\left( II_{\ast }\right) ^{\delta /2-1}}%
\right) g\,dA
\end{equation*}
is defined. Then%
\begin{equation*}
\left( Q(f,f),g\right) =\frac{1}{4}\int_{\left( \mathbb{R}^{3}\times \mathbb{%
R}_{+}\right) ^{4}}\left( \frac{f^{\prime }f_{\ast }^{\prime }}{\left(
I^{\prime }I_{\ast }^{\prime }\right) ^{\delta /2-1}}-\frac{ff_{\ast }}{%
\left( II_{\ast }\right) ^{\delta /2-1}}\right) \left( g+g_{\ast }-g^{\prime
}-g_{\ast }^{\prime }\right) \,dA.
\end{equation*}
\end{proposition}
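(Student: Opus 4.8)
The plan is to derive the identity by the classical symmetrization argument, using nothing beyond Lemma~\ref{L0}; no genuinely new ingredient is required.

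First I would recall the four equivalent expressions for the weak form $\left( Q(f,f),g\right)$ displayed immediately before the statement. Starting from the defining integral $\left( Q(f,f),g\right) =\int_{\left( \mathbb{R}^{3}\times \mathbb{R}_{+}\right) ^{4}}\left( \frac{f^{\prime }f_{\ast }^{\prime }}{\left( I^{\prime }I_{\ast }^{\prime }\right) ^{\delta /2-1}}-\frac{ff_{\ast }}{\left( II_{\ast }\right) ^{\delta /2-1}}\right) g\,dA$, I would apply in turn the three measure-preserving interchanges in $\left( \ref{tr1}\right)$ and track how the bracket $\frac{f^{\prime }f_{\ast }^{\prime }}{\left( I^{\prime }I_{\ast }^{\prime }\right) ^{\delta /2-1}}-\frac{ff_{\ast }}{\left( II_{\ast }\right) ^{\delta /2-1}}$ transforms: the swap $\left( \boldsymbol{\xi },I\right) \leftrightarrow \left( \boldsymbol{\xi }_{\ast },I_{\ast }\right)$ leaves both $dA$ and the bracket unchanged and sends $g\mapsto g_{\ast }$; the swap $\left( \boldsymbol{\xi },\boldsymbol{\xi }_{\ast },I,I_{\ast }\right) \leftrightarrow \left( \boldsymbol{\xi }^{\prime },\boldsymbol{\xi }_{\ast }^{\prime },I^{\prime },I_{\ast }^{\prime }\right)$ leaves $dA$ unchanged, reverses the sign of the bracket, and sends $g\mapsto g^{\prime }$; composing the latter with $\left( \boldsymbol{\xi }^{\prime },I^{\prime }\right) \leftrightarrow \left( \boldsymbol{\xi }_{\ast }^{\prime },I_{\ast }^{\prime }\right)$ keeps the sign change and sends $g\mapsto g_{\ast }^{\prime }$. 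This is exactly what yields $\left( Q(f,f),g\right)$ equal also to $\int \left( \cdots \right) g_{\ast }\,dA$, to $-\int \left( \cdots \right) g^{\prime }\,dA$, and to $-\int \left( \cdots \right) g_{\ast }^{\prime }\,dA$.

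Next I would simply add these four identities and divide by $4$: the left-hand sides are all $\left( Q(f,f),g\right)$, while the right-hand sides sum to $\frac{1}{4}\int \left( \cdots \right) \left( g+g_{\ast }-g^{\prime }-g_{\ast }^{\prime }\right) \,dA$, which is the asserted formula.

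The only point requiring care — and the closest thing to an obstacle — is the additivity invoked in the final step, i.e. splitting the integral of the sum into four separate integrals. This is legitimate because $dA$ is a nonnegative measure and the maps in $\left( \ref{tr1}\right)$ are measure-preserving involutions, so the four integrands $\left( \cdots \right) g$, $\left( \cdots \right) g_{\ast }$, $\left( \cdots \right) g^{\prime }$, $\left( \cdots \right) g_{\ast }^{\prime }$ all have the same $dA$-integrability; hence as soon as the first integral is defined (in particular whenever $g$ is chosen so that the expression is absolutely integrable, e.g. a collision invariant or a suitable polynomial weight), the other three are too and the rearrangement is valid. Apart from this bookkeeping there is no substantive difficulty: the content of the proposition is carried entirely by the invariance statement of Lemma~\ref{L0}, just as in the monatomic case.
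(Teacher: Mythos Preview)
Your proposal is correct and follows exactly the approach of the paper: the four equivalent weak forms displayed just before the proposition are obtained from Lemma~\ref{L0} precisely as you describe, and the proposition then follows by adding them and dividing by $4$. The paper in fact leaves this final averaging step implicit, so your write-up is if anything slightly more explicit than the original.
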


\begin{definition}
A function $g=g\left( \boldsymbol{\xi },I\right) \ $is a collision invariant
if%
\begin{equation*}
\left( g+g_{\ast }-g^{\prime }-g_{\ast }^{\prime }\right) W(\boldsymbol{\xi }%
,\boldsymbol{\xi }_{\ast },I,I_{\ast }\left\vert \boldsymbol{\xi }^{\prime },%
\boldsymbol{\xi }_{\ast }^{\prime },I^{\prime },I_{\ast }^{\prime }\right.
)=0\text{ a.e. .}
\end{equation*}
\end{definition}

Then it is clear that $1,$ $\xi _{x},$ $\xi _{y},$ $\xi _{z},$ and $%
m\left\vert \boldsymbol{\xi }\right\vert ^{2}+2I$ are collision invariants -
corresponding to conservation of mass, momentum, and total energy - and, in
fact we have the following proposition, cf. Proposition 1 in \cite{BDLP-94}.

\begin{proposition}
\label{P2}The vector space of collision invariants is generated by 
\begin{equation*}
\left\{ 1,\xi _{x},\xi _{y},\xi _{z},m\left\vert \boldsymbol{\xi }%
\right\vert ^{2}+2I\right\} \text{.}
\end{equation*}
\end{proposition}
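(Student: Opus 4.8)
The aim is to establish the inclusion ``$\subseteq$'': every collision invariant $g=g\left( \boldsymbol{\xi },I\right) $ lies in the span of $1,\xi _{x},\xi _{y},\xi _{z},m\left\vert \boldsymbol{\xi }\right\vert ^{2}+2I$; the reverse inclusion is already noted above. By definition $g+g_{\ast }=g^{\prime }+g_{\ast }^{\prime }$ holds a.e.\ on the set where $W>0$, i.e.\ whenever $\left( \boldsymbol{\xi },\boldsymbol{\xi }_{\ast },I,I_{\ast }\right) $ and $\left( \boldsymbol{\xi }^{\prime },\boldsymbol{\xi }_{\ast }^{\prime },I^{\prime },I_{\ast }^{\prime }\right) $ are linked by a collision, so that both $\mathbf{G}=\dfrac{\boldsymbol{\xi }+\boldsymbol{\xi }_{\ast }}{2}$ and $E=\dfrac{m}{4}\left\vert \boldsymbol{\xi }-\boldsymbol{\xi }_{\ast }\right\vert ^{2}+I+I_{\ast }$ are preserved. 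Using the parametrization of the outgoing state recalled in Section $\ref{S2.1}$, namely $\boldsymbol{\xi }^{\prime }=\mathbf{G}+\dfrac{\left\vert \mathbf{g}^{\prime }\right\vert }{2}\omega $, $\boldsymbol{\xi }_{\ast }^{\prime }=\mathbf{G}-\dfrac{\left\vert \mathbf{g}^{\prime }\right\vert }{2}\omega $, $\dfrac{m}{4}\left\vert \mathbf{g}^{\prime }\right\vert ^{2}=RE$, $I^{\prime }=r\left( 1-R\right) E$, $I_{\ast }^{\prime }=\left( 1-r\right) \left( 1-R\right) E$, the right-hand side $g^{\prime }+g_{\ast }^{\prime }$ is a function of $\left( \mathbf{G},E,R,r,\omega \right) $ alone, whereas the left-hand side does not depend on $\left( R,r,\omega \right) $. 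Since for a.e.\ incoming state the collision lies in the support of $W$ for a.e.\ $\left( R,r,\omega \right) \in \left( 0,1\right) ^{2}\times S^{2}$ (that support being where $\sigma >0$ and $\left\vert \mathbf{g}^{\prime }\right\vert >0$, the latter condition being exactly $m\left\vert \mathbf{g}\right\vert ^{2}>4\Delta I$), one concludes that
\[
g\left( \boldsymbol{\xi },I\right) +g\left( \boldsymbol{\xi }_{\ast },I_{\ast }\right) =\Phi \left( \mathbf{G},E\right)
\]
for some function $\Phi $.

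Next I would isolate the internal-energy dependence. For fixed $\boldsymbol{\xi }$ and $\boldsymbol{\xi }_{\ast }$ the value of $E$ differs from $I+I_{\ast }$ by a constant, so $g\left( \boldsymbol{\xi },I\right) +g\left( \boldsymbol{\xi }_{\ast },I_{\ast }\right) $ depends on $I,I_{\ast }$ only through $I+I_{\ast }$; a measurable solution of $u\left( I\right) +v\left( I_{\ast }\right) =w\left( I+I_{\ast }\right) $ has $u$ and $v$ affine with one and the same slope. Hence $g\left( \boldsymbol{\xi },I\right) =a\left( \boldsymbol{\xi }\right) I+b\left( \boldsymbol{\xi }\right) $, and since the slope $a\left( \boldsymbol{\xi }\right) $ must coincide with $a\left( \boldsymbol{\xi }_{\ast }\right) $ for every pair, $a\left( \boldsymbol{\xi }\right) \equiv a$ is a constant, so $g\left( \boldsymbol{\xi },I\right) =aI+b\left( \boldsymbol{\xi }\right) $.

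Substituting this form back into $g+g_{\ast }=g^{\prime }+g_{\ast }^{\prime }$ and using total-energy conservation, $I^{\prime }+I_{\ast }^{\prime }=I+I_{\ast }+\dfrac{m}{2}\left( \left\vert \boldsymbol{\xi }\right\vert ^{2}+\left\vert \boldsymbol{\xi }_{\ast }\right\vert ^{2}-\left\vert \boldsymbol{\xi }^{\prime }\right\vert ^{2}-\left\vert \boldsymbol{\xi }_{\ast }^{\prime }\right\vert ^{2}\right) $, to eliminate the internal energies, the terms $a\left( I+I_{\ast }\right) $ cancel and there remains
\[
c\left( \boldsymbol{\xi }\right) +c\left( \boldsymbol{\xi }_{\ast }\right) =c\left( \boldsymbol{\xi }^{\prime }\right) +c\left( \boldsymbol{\xi }_{\ast }^{\prime }\right) ,\qquad c\left( \boldsymbol{\xi }\right) :=b\left( \boldsymbol{\xi }\right) -\frac{am}{2}\left\vert \boldsymbol{\xi }\right\vert ^{2},
\]
now required merely whenever $\boldsymbol{\xi }+\boldsymbol{\xi }_{\ast }=\boldsymbol{\xi }^{\prime }+\boldsymbol{\xi }_{\ast }^{\prime }$ — the energy constraint having been absorbed into the internal energies, which can be taken large enough that $I^{\prime },I_{\ast }^{\prime }>0$ and the collision lies in the support of $W$. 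Thus $c\left( \boldsymbol{\xi }\right) +c\left( \boldsymbol{\xi }_{\ast }\right) $ depends on $\boldsymbol{\xi },\boldsymbol{\xi }_{\ast }$ only through $\boldsymbol{\xi }+\boldsymbol{\xi }_{\ast }$, and the analogous (now vector-valued) argument gives $c\left( \boldsymbol{\xi }\right) =\mathbf{p}\cdot \boldsymbol{\xi }+q$ for a constant vector $\mathbf{p}\in \mathbb{R}^{3}$ and a constant $q$. Therefore $g\left( \boldsymbol{\xi },I\right) =aI+\dfrac{am}{2}\left\vert \boldsymbol{\xi }\right\vert ^{2}+\mathbf{p}\cdot \boldsymbol{\xi }+q=\dfrac{a}{2}\left( m\left\vert \boldsymbol{\xi }\right\vert ^{2}+2I\right) +\mathbf{p}\cdot \boldsymbol{\xi }+q$, which lies in the asserted span; this would complete the proof.

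The algebra above is routine; the one genuinely delicate point is regularity. The defining identity holds only a.e.\ on the collision manifold, so the reductions ``$u\left( I\right) +v\left( I_{\ast }\right) $ depends only on $I+I_{\ast }$'' $\Rightarrow$ affine and ``$c\left( \boldsymbol{\xi }\right) +c\left( \boldsymbol{\xi }_{\ast }\right) $ depends only on $\boldsymbol{\xi }+\boldsymbol{\xi }_{\ast }$'' $\Rightarrow$ affine must be invoked in their measurable versions, which rests on the classical theory of measurable solutions of Cauchy's functional equation together with a Fubini/Steinhaus-type argument passing from the a.e.\ statement in the full collision variables to a.e.\ statements in the reduced variables; alternatively, restricting attention to continuous $g$ makes everything elementary. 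I expect this measure-theoretic bookkeeping — not any step of the algebra — to be the main obstacle. Note, incidentally, that the only property of $\delta $ used is that the weights $\left( r\left( 1-r\right) \right) ^{\delta /2-1}$ and $\left( 1-R\right) ^{\delta -1}R^{1/2}$ are a.e.\ positive on $\left( 0,1\right) $, so that the full range of the redistribution parameters $\left( R,r\right) $ is available.
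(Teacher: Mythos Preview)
The paper does not actually prove Proposition~\ref{P2}; it merely states the result with a pointer ``cf.\ Proposition~1 in \cite{BDLP-94}'' and moves on. So there is no in-paper argument to compare against.

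Your sketch is a correct and standard route to the result. The reduction to $g(\boldsymbol{\xi},I)+g(\boldsymbol{\xi}_{\ast},I_{\ast})=\Phi(\mathbf{G},E)$, the splitting $g=aI+b(\boldsymbol{\xi})$ via the one-variable Cauchy equation in $I$, and the subsequent elimination of the kinetic-energy constraint by absorbing it into the internal energies (so that $c=b-\tfrac{am}{2}|\boldsymbol{\xi}|^{2}$ satisfies the \emph{monatomic} invariant identity under momentum conservation alone) are all valid. The last step---$c(\boldsymbol{\xi})+c(\boldsymbol{\xi}_{\ast})$ depending only on $\boldsymbol{\xi}+\boldsymbol{\xi}_{\ast}$ forces $c$ affine---is exactly the measurable Cauchy argument you name. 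You have also correctly identified the only genuine subtlety: the passage from an a.e.\ identity on the collision manifold to a.e.\ identities in the reduced variables, which needs a Fubini argument and the a.e.\ positivity of $\sigma$ and of the Borgnakke--Larsen weights. Since the paper offers no proof of its own, your outline in fact supplies more detail than the paper does; the cited reference \cite{BDLP-94} proceeds along broadly similar lines.
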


Define%
\begin{equation*}
\mathcal{W}\left[ f\right] :=\left( Q(f,f),\log \left( I^{1-\delta
/2}f\right) \right) .
\end{equation*}%
It follows by Proposition $\ref{P1}$ that%
\begin{eqnarray*}
&&\mathcal{W}\left[ f\right] \\
&=&-\frac{1}{4}\int\limits_{\left( \mathbb{R}^{3}\times \mathbb{R}%
_{+}\right) ^{4}}\!\frac{ff_{\ast }}{\left( II_{\ast }\right) ^{\delta /2-1}}%
\left( \frac{\left( II_{\ast }\right) ^{\delta /2-1}f^{\prime }f_{\ast
}^{\prime }}{ff_{\ast }\left( I^{\prime }I_{\ast }^{\prime }\right) ^{\delta
/2-1}}-1\right) \log \left( \frac{\left( II_{\ast }\right) ^{\delta
/2-1}f^{\prime }f_{\ast }^{\prime }}{ff_{\ast }\left( I^{\prime }I_{\ast
}^{\prime }\right) ^{\delta /2-1}}\right) dA\text{.}
\end{eqnarray*}%
Since $\left( x-1\right) \mathrm{log}\left( x\right) \geq 0$ for $x>0$, with
equality if and only if $x=1$,%
\begin{equation*}
\mathcal{W}\left[ f\right] \leq 0\text{,}
\end{equation*}%
with equality if and only if 
\begin{equation}
\left( \frac{ff_{\ast }}{\left( II_{\ast }\right) ^{\delta /2-1}}-\frac{%
f^{\prime }f_{\ast }^{\prime }}{\left( I^{\prime }I_{\ast }^{\prime }\right)
^{\delta /2-1}}\right) W(\boldsymbol{\xi },\boldsymbol{\xi }_{\ast
},I,I_{\ast }\left\vert \boldsymbol{\xi }^{\prime },\boldsymbol{\xi }_{\ast
}^{\prime },I^{\prime },I_{\ast }^{\prime }\right. )=0\text{ a.e.,}
\label{m1}
\end{equation}%
or, equivalently, if and only if%
\begin{equation*}
Q(f,f)\equiv 0\text{.}
\end{equation*}

For any equilibrium, or Maxwellian, distribution $M$, $Q(M,M)\equiv 0$, why
it follows, by the relation $\left( \ref{m1}\right) $, that 
\begin{multline*}
\left( \log \frac{M}{I^{\delta /2-1}}+\log \frac{M_{\ast }}{I_{\ast
}^{\delta /2-1}}-\log \frac{M^{\prime }}{\left( I^{\prime }\right) ^{\delta
/2-1}}-\log \frac{M_{\ast }^{\prime }}{\left( I_{\ast }^{\prime }\right)
^{\delta /2-1}}\right) \\
\times W(\boldsymbol{\xi },\boldsymbol{\xi }_{\ast },I,I_{\ast }\left\vert 
\boldsymbol{\xi }^{\prime },\boldsymbol{\xi }_{\ast }^{\prime },I^{\prime
},I_{\ast }^{\prime }\right. )=0\text{ a.e. .}
\end{multline*}%
Hence, $\log \dfrac{M}{I^{\delta /2-1}}$ is a collision invariant, and the
Maxwellian distributions are of the form 
\begin{equation*}
M=\dfrac{nI^{\delta /2-1}m^{3/2}}{\left( 2\pi \right) ^{3/2}T^{\left( \delta
+3\right) /2}\Gamma \left( \frac{\delta }{2}\right) }e^{-\left( m\left\vert 
\boldsymbol{\xi }-\mathbf{u}\right\vert ^{2}+2I\right) /\left( 2T\right) }%
\text{, }
\end{equation*}%
where $n=\left( M,1\right) $, $\mathbf{u}=\dfrac{1}{n}\left( M,\boldsymbol{%
\xi }\right) $, and $T=\dfrac{m}{3n}\left( M,\left\vert \boldsymbol{\xi }-%
\mathbf{u}\right\vert ^{2}\right) $, while $\Gamma =\Gamma (s)$ denotes the
Gamma function $\Gamma (s)=\int_{0}^{\infty }x^{s-1}e^{-x}\,dx$.

Note that by equation $\left( \ref{m1}\right) $ any Maxwellian distribution $%
M$ satisfies the relations 
\begin{equation}
\left( \frac{M^{\prime }M_{\ast }^{\prime }}{\left( I^{\prime }I_{\ast
}^{\prime }\right) ^{\delta /2-1}}-\frac{MM_{\ast }}{\left( II_{\ast
}\right) ^{\delta /2-1}}\right) W(\boldsymbol{\xi },\boldsymbol{\xi }_{\ast
},I,I_{\ast }\left\vert \boldsymbol{\xi }^{\prime },\boldsymbol{\xi }_{\ast
}^{\prime },I^{\prime },I_{\ast }^{\prime }\right. )=0\text{ a.e. }.
\label{M1}
\end{equation}

\begin{remark}
Introducing the $\mathcal{H}$-functional%
\begin{equation*}
\mathcal{H}\left[ f\right] =\left( f,\log \left( I^{1-\delta /2}f\right)
\right) \text{,}
\end{equation*}%
an $\mathcal{H}$-theorem can be obtained, cf. \cite{BDLP-94, BBBD-18, GP-20}.
\end{remark}

\subsection{Linearized collision operator\label{S2.3}}

Considering deviations of a Maxwellian distribution $M=\dfrac{I^{\delta
/2-1}m^{3/2}}{\left( 2\pi \right) ^{3/2}\Gamma \left( \frac{\delta }{2}%
\right) }e^{-\frac{m\left\vert \boldsymbol{\xi }\right\vert ^{2}}{2}-I}$ of
the form%
\begin{equation}
f=M+M^{1/2}h  \label{s1}
\end{equation}%
results, by insertion in the Boltzmann equation $\left( \ref{BE1}\right) $,
in the equation%
\begin{equation}
\frac{\partial h}{\partial t}+\left( \boldsymbol{\xi }\cdot \nabla _{\mathbf{%
x}}\right) h+\mathcal{L}h=\Gamma \left( h,h\right) \text{,}  \label{LBE}
\end{equation}%
where the linearized collision operator $\mathcal{L}$ is given by \ 
\begin{eqnarray}
\mathcal{L}h &=&-\!M^{-1/2}\left( Q(M,M^{1/2}h)+Q(M^{1/2}h,M)\right)  \notag
\\
&=&\!M^{-1/2}\int_{\left( \mathbb{R}^{3}\times \mathbb{R}_{+}\right) ^{3}}%
\frac{\left( MM_{\ast }M^{\prime }M_{\ast }^{\prime }\right) ^{1/2}}{\left(
II_{\ast }I^{\prime }I_{\ast }^{\prime }\right) ^{\delta /4-1/2}}W(%
\boldsymbol{\xi },\boldsymbol{\xi }_{\ast },I,I_{\ast }\left\vert 
\boldsymbol{\xi }^{\prime },\boldsymbol{\xi }_{\ast }^{\prime },I^{\prime
},I_{\ast }^{\prime }\right. )  \notag \\
&&\!\times \left( \frac{h}{M^{1/2}}+\frac{h_{\ast }}{M_{\ast }^{1/2}}-\frac{%
h^{\prime }}{\left( M^{\prime }\right) ^{1/2}}-\frac{h_{\ast }^{\prime }}{%
\left( M_{\ast }^{\prime }\right) ^{1/2}}\right) \,d\boldsymbol{\xi }_{\ast
}d\boldsymbol{\xi }^{\prime }d\boldsymbol{\xi }_{\ast }^{\prime }dI_{\ast
}dI^{\prime }dI_{\ast }^{\prime }  \notag \\
&=&\upsilon h-K\left( h\right) \text{,}  \label{dec2}
\end{eqnarray}%
with%
\begin{eqnarray}
\upsilon &=&\int_{\left( \mathbb{R}^{3}\times \mathbb{R}_{+}\right) ^{3}}\!%
\frac{M_{\ast }}{\left( II_{\ast }\right) ^{\delta /2-1}}W(\boldsymbol{\xi },%
\boldsymbol{\xi }_{\ast },I,I_{\ast }\left\vert \boldsymbol{\xi }^{\prime },%
\boldsymbol{\xi }_{\ast }^{\prime },I^{\prime },I_{\ast }^{\prime }\right. )d%
\boldsymbol{\xi }_{\ast }d\boldsymbol{\xi }^{\prime }d\boldsymbol{\xi }%
_{\ast }^{\prime }dI_{\ast }dI^{\prime }dI_{\ast }^{\prime }\!\text{,} 
\notag \\
K\left( h\right) &=&M^{-1/2}\int_{\left( \mathbb{R}^{3}\times \mathbb{R}%
_{+}\right) ^{3}}\frac{\left( MM_{\ast }M^{\prime }M_{\ast }^{\prime
}\right) ^{1/2}}{\left( II_{\ast }I^{\prime }I_{\ast }^{\prime }\right)
^{\delta /4-1/2}}W(\boldsymbol{\xi },\boldsymbol{\xi }_{\ast },I,I_{\ast
}\left\vert \boldsymbol{\xi }^{\prime },\boldsymbol{\xi }_{\ast }^{\prime
},I^{\prime },I_{\ast }^{\prime }\right. )  \notag \\
&&\times \left( \frac{h^{\prime }}{\left( M^{\prime }\right) ^{1/2}}+\frac{%
h_{\ast }^{\prime }}{\left( M_{\ast }^{\prime }\right) ^{1/2}}-\frac{h_{\ast
}}{M_{\ast }^{1/2}}\right) \,d\boldsymbol{\xi }_{\ast }d\boldsymbol{\xi }%
^{\prime }d\boldsymbol{\xi }_{\ast }^{\prime }dI_{\ast }dI^{\prime }dI_{\ast
}^{\prime }\text{,}  \label{dec1}
\end{eqnarray}%
while the quadratic term $\Gamma $ is given by%
\begin{equation}
\Gamma \left( h,h\right) =M^{-1/2}Q(M^{1/2}h,M^{1/2}h)\text{.}  \label{nl1}
\end{equation}%
The following lemma follows immediately by Lemma $\ref{L0}$.

\begin{lemma}
\label{L1}The measure 
\begin{equation*}
d\widetilde{A}=\frac{\left( MM_{\ast }M^{\prime }M_{\ast }^{\prime }\right)
^{1/2}}{\left( II_{\ast }I^{\prime }I_{\ast }^{\prime }\right) ^{\delta
/4-1/2}}dA
\end{equation*}%
is invariant under the interchanges of variables $\left( \ref{tr1}\right) $,
respectively.
\end{lemma}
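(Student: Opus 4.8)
The plan is to deduce this lemma from Lemma~\ref{L0} by checking that the extra weight factor
$$
w:=\frac{\left( MM_{\ast }M^{\prime }M_{\ast }^{\prime }\right) ^{1/2}}{\left( II_{\ast }I^{\prime }I_{\ast }^{\prime }\right) ^{\delta /4-1/2}}
$$
is itself symmetric under each of the three interchanges of variables in $\left( \ref{tr1}\right) $. Since Lemma~\ref{L0} already gives invariance of $dA$ under these interchanges, and since $d\widetilde{A}=w\,dA$, it suffices to show that $w$ is left unchanged by (i) swapping $\left( \boldsymbol{\xi },\boldsymbol{\xi }_{\ast },I,I_{\ast }\right) \leftrightarrow \left( \boldsymbol{\xi }^{\prime },\boldsymbol{\xi }_{\ast }^{\prime },I^{\prime },I_{\ast }^{\prime }\right)$, (ii) swapping $\left( \boldsymbol{\xi },I\right) \leftrightarrow \left( \boldsymbol{\xi }_{\ast },I_{\ast }\right)$, and (iii) swapping $\left( \boldsymbol{\xi }^{\prime },I^{\prime }\right) \leftrightarrow \left( \boldsymbol{\xi }_{\ast }^{\prime },I_{\ast }^{\prime }\right)$.

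For each of the three interchanges the verification is immediate from the explicit product form of $w$: the numerator $\left( MM_{\ast }M^{\prime }M_{\ast }^{\prime }\right) ^{1/2}$ is a product over the four "slots" of the single-particle Maxwellian $M\left( \boldsymbol{\xi },I\right) =\dfrac{I^{\delta /2-1}m^{3/2}}{\left( 2\pi \right) ^{3/2}\Gamma \left( \delta /2\right) }e^{-m\left\vert \boldsymbol{\xi }\right\vert ^{2}/2-I}$, and each of the three interchanges merely permutes those four slots; likewise the denominator $\left( II_{\ast }I^{\prime }I_{\ast }^{\prime }\right) ^{\delta /4-1/2}$ is a fully symmetric function of $I,I_{\ast },I^{\prime },I_{\ast }^{\prime }$. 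Hence $w$ is invariant, and therefore $d\widetilde{A}=w\,dA$ inherits the invariance of $dA$ under each interchange in $\left( \ref{tr1}\right) $, which is the assertion of the lemma. I do not anticipate any real obstacle here; the only point worth a word of care is that $w$ must be invariant under the \emph{same} interchanges for which $dA$ is invariant, but this matches exactly, so the proof is a one-line symmetry observation once the product structure of $w$ is spelled out.

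Concretely, the proof I would write is: \textbf{Proof.} By Lemma~\ref{L0}, $dA$ is invariant under each of the interchanges $\left( \ref{tr1}\right) $. It remains to note that the weight $\dfrac{\left( MM_{\ast }M^{\prime }M_{\ast }^{\prime }\right) ^{1/2}}{\left( II_{\ast }I^{\prime }I_{\ast }^{\prime }\right) ^{\delta /4-1/2}}$ is unchanged under each of these interchanges, since it is, up to the symmetric factor $\left( II_{\ast }I^{\prime }I_{\ast }^{\prime }\right) ^{1/2-\delta /4}$, the symmetric product $M\left( \boldsymbol{\xi },I\right) ^{1/2}M\left( \boldsymbol{\xi }_{\ast },I_{\ast }\right) ^{1/2}M\left( \boldsymbol{\xi }^{\prime },I^{\prime }\right) ^{1/2}M\left( \boldsymbol{\xi }_{\ast }^{\prime },I_{\ast }^{\prime }\right) ^{1/2}$ over the four collision states, and each interchange in $\left( \ref{tr1}\right) $ permutes these four states among themselves. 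Hence $d\widetilde{A}$ is invariant under the interchanges $\left( \ref{tr1}\right) $. $\blacksquare$
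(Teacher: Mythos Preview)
Your proposal is correct and matches the paper's approach exactly: the paper simply states that Lemma~\ref{L1} ``follows immediately by Lemma~\ref{L0}'', which is precisely the argument you spell out---invariance of $dA$ from Lemma~\ref{L0} combined with the obvious symmetry of the weight factor under each interchange in $\left( \ref{tr1}\right) $.
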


The weak form of the linearized collision operator $\mathcal{L}$ reads%
\begin{eqnarray*}
\left( \mathcal{L}h,g\right) &=&\int_{\left( \mathbb{R}^{3}\times \mathbb{R}%
_{+}\right) ^{4}}\left( \frac{h}{M^{1/2}}+\frac{h_{\ast }}{M_{\ast }^{1/2}}-%
\frac{h^{\prime }}{\left( M^{\prime }\right) ^{1/2}}-\frac{h_{\ast }^{\prime
}}{\left( M_{\ast }^{\prime }\right) ^{1/2}}\right) \frac{g}{M^{1/2}}\,d%
\widetilde{A} \\
&=&\int_{\left( \mathbb{R}^{3}\times \mathbb{R}_{+}\right) ^{4}}\left( \frac{%
h}{M^{1/2}}+\frac{h_{\ast }}{M_{\ast }^{1/2}}-\frac{h^{\prime }}{\left(
M^{\prime }\right) ^{1/2}}-\frac{h_{\ast }^{\prime }}{\left( M_{\ast
}^{\prime }\right) ^{1/2}}\right) \frac{g_{\ast }}{M_{\ast }^{1/2}}\,d%
\widetilde{A} \\
&=&-\int_{\left( \mathbb{R}^{3}\times \mathbb{R}_{+}\right) ^{4}}\left( 
\frac{h}{M^{1/2}}+\frac{h_{\ast }}{M_{\ast }^{1/2}}-\frac{h^{\prime }}{%
\left( M^{\prime }\right) ^{1/2}}-\frac{h_{\ast }^{\prime }}{\left( M_{\ast
}^{\prime }\right) ^{1/2}}\right) \frac{g^{\prime }}{\left( M^{\prime
}\right) ^{1/2}}\,d\widetilde{A} \\
&=&-\int_{\left( \mathbb{R}^{3}\times \mathbb{R}_{+}\right) ^{4}}\left( 
\frac{h}{M^{1/2}}+\frac{h_{\ast }}{M_{\ast }^{1/2}}-\frac{h^{\prime }}{%
\left( M^{\prime }\right) ^{1/2}}-\frac{h_{\ast }^{\prime }}{\left( M_{\ast
}^{\prime }\right) ^{1/2}}\right) \frac{g_{\ast }^{\prime }}{\left( M_{\ast
}^{\prime }\right) ^{1/2}}\,d\widetilde{A}
\end{eqnarray*}%
for $g=g\left( \boldsymbol{\xi },I\right) $, such that the first integral is
defined, while the following integrals are obtained by applying Lemma $\ref%
{L1}$. Then we have the following lemma.

\begin{lemma}
\label{L2}Let $g=g\left( \boldsymbol{\xi },I\right) $ be such that%
\begin{equation*}
\int_{\left( \mathbb{R}^{3}\times \mathbb{R}_{+}\right) ^{4}}\left( \frac{h}{%
M^{1/2}}+\frac{h_{\ast }}{M_{\ast }^{1/2}}-\frac{h^{\prime }}{\left(
M^{\prime }\right) ^{1/2}}-\frac{h_{\ast }^{\prime }}{\left( M_{\ast
}^{\prime }\right) ^{1/2}}\right) \frac{g}{M^{1/2}}\,d\widetilde{A}
\end{equation*}%
is defined. Then%
\begin{eqnarray*}
\left( \mathcal{L}h,g\right) &=&\frac{1}{4}\int_{\left( \mathbb{R}^{3}\times 
\mathbb{R}_{+}\right) ^{4}}\left( \frac{h}{M^{1/2}}+\frac{h_{\ast }}{M_{\ast
}^{1/2}}-\frac{h^{\prime }}{\left( M^{\prime }\right) ^{1/2}}-\frac{h_{\ast
}^{\prime }}{\left( M_{\ast }^{\prime }\right) ^{1/2}}\right) \\
&&\times \left( \frac{g}{M^{1/2}}+\frac{g_{\ast }}{M_{\ast }^{1/2}}-\frac{%
g^{\prime }}{\left( M^{\prime }\right) ^{1/2}}-\frac{g_{\ast }^{\prime }}{%
\left( M_{\ast }^{\prime }\right) ^{1/2}}\right) d\widetilde{A}.
\end{eqnarray*}
\end{lemma}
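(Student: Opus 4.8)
The plan is to obtain the symmetrized identity by averaging the four representations of $\left( \mathcal{L}h,g\right) $ that are displayed immediately before the statement of the lemma. Write
\[
\Phi :=\frac{h}{M^{1/2}}+\frac{h_{\ast }}{M_{\ast }^{1/2}}-\frac{h^{\prime }}{\left( M^{\prime }\right) ^{1/2}}-\frac{h_{\ast }^{\prime }}{\left( M_{\ast }^{\prime }\right) ^{1/2}}
\]
for the common first factor. The first step is to record the action of the three interchanges in $\left( \ref{tr1}\right) $: under the two particle-swaps $\left( \boldsymbol{\xi },I\right) \leftrightarrow \left( \boldsymbol{\xi }_{\ast },I_{\ast }\right) $ and $\left( \boldsymbol{\xi }^{\prime },I^{\prime }\right) \leftrightarrow \left( \boldsymbol{\xi }_{\ast }^{\prime },I_{\ast }^{\prime }\right) $ the factor $\Phi $ is invariant, while under the pre-post swap $\left( \boldsymbol{\xi },\boldsymbol{\xi }_{\ast },I,I_{\ast }\right) \leftrightarrow \left( \boldsymbol{\xi }^{\prime },\boldsymbol{\xi }_{\ast }^{\prime },I^{\prime },I_{\ast }^{\prime }\right) $ it is mapped to $-\Phi $; by Lemma $\ref{L1}$ the measure $d\widetilde{A}$ is invariant under all three. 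Hence applying these changes of variables to the defining integral $\int \Phi \,\dfrac{g}{M^{1/2}}\,d\widetilde{A}$ converts $\dfrac{g}{M^{1/2}}$ into $\dfrac{g_{\ast }}{M_{\ast }^{1/2}}$, $-\dfrac{g^{\prime }}{\left( M^{\prime }\right) ^{1/2}}$ and $-\dfrac{g_{\ast }^{\prime }}{\left( M_{\ast }^{\prime }\right) ^{1/2}}$, respectively; this is exactly the four-line display preceding the lemma, and it also shows that each of the four integrals is defined (each being equal to $\pm\left( \mathcal{L}h,g\right) $) as soon as the first one is, which is the hypothesis.

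Second, I would add the four equal expressions for $\left( \mathcal{L}h,g\right) $ and divide by $4$, obtaining
\[
4\left( \mathcal{L}h,g\right) =\int_{\left( \mathbb{R}^{3}\times \mathbb{R}_{+}\right) ^{4}}\Phi \left( \frac{g}{M^{1/2}}+\frac{g_{\ast }}{M_{\ast }^{1/2}}-\frac{g^{\prime }}{\left( M^{\prime }\right) ^{1/2}}-\frac{g_{\ast }^{\prime }}{\left( M_{\ast }^{\prime }\right) ^{1/2}}\right) d\widetilde{A},
\]
which is the claimed identity. The only point needing a word of justification is that the termwise addition under the integral sign is legitimate: each of the four integrands $\Phi \cdot g/M^{1/2}$, $\Phi \cdot g_{\ast }/M_{\ast }^{1/2}$, $\Phi \cdot g^{\prime }/(M^{\prime })^{1/2}$, $\Phi \cdot g_{\ast }^{\prime }/(M_{\ast }^{\prime })^{1/2}$ is absolutely integrable, since each equals, up to sign and a measure-preserving change of variables from $\left( \ref{tr1}\right) $, the integrand $\Phi \cdot g/M^{1/2}$ assumed integrable in the hypothesis; therefore their sum is integrable and the integral of the sum equals the sum of the integrals.

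There is essentially no obstacle here: the argument is the standard fourfold symmetrization of the weak form, entirely parallel to the derivation of Proposition $\ref{P1}$ for $Q$, and all its substance (the symmetry properties of $\Phi $ and the invariance of $d\widetilde{A}$) has already been assembled in Lemma $\ref{L0}$ and Lemma $\ref{L1}$. The only mild care required is the bookkeeping of signs, namely checking that the pre-post interchange is responsible for the two minus signs appearing in the second factor; everything else is a direct substitution.
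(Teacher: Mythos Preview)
Your proposal is correct and follows exactly the paper's approach: the paper establishes the four equivalent representations of $(\mathcal{L}h,g)$ in the display immediately preceding Lemma~\ref{L2} (via Lemma~\ref{L1}), and the lemma is obtained by averaging them, precisely as you do. One small bookkeeping remark: the primed particle-swap $(\boldsymbol{\xi}',I')\leftrightarrow(\boldsymbol{\xi}_{\ast}',I_{\ast}')$ alone leaves both $\Phi$ and $g/M^{1/2}$ unchanged, so the fourth form with $g_{\ast}'/(M_{\ast}')^{1/2}$ actually comes from a \emph{composition} (e.g.\ pre-post swap followed by a particle swap) rather than from a single interchange---but since you explicitly invoke the already-established four-line display, this does not affect the validity of your argument.
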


\begin{proposition}
\label{Prop1}The linearized collision operator is symmetric and nonnegative,%
\begin{equation*}
\left( \mathcal{L}h,g\right) =\left( h,\mathcal{L}g\right) \text{ and }%
\left( \mathcal{L}h,h\right) \geq 0\text{,}
\end{equation*}%
and\ the kernel of $\mathcal{L}$, $\ker \mathcal{L}$, is generated by%
\begin{equation*}
\left\{ M^{1/2},\xi _{x}M^{1/2},\xi _{y}M^{1/2},\xi _{z}M^{1/2},\left(
m\left\vert \boldsymbol{\xi }\right\vert ^{2}+2I\right) M^{1/2}\right\} 
\text{.}
\end{equation*}
\end{proposition}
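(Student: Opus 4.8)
The plan is to derive all three assertions from the symmetrized weak form in Lemma~\ref{L2}. Symmetry is immediate: the right-hand side of the identity in Lemma~\ref{L2} is invariant under the interchange $h\leftrightarrow g$, and since the inner product on $\mathfrak{h}$ is real, $\left(\mathcal{L}h,g\right)=\left(\mathcal{L}g,h\right)=\left(h,\mathcal{L}g\right)$. For nonnegativity I would put $g=h$ in Lemma~\ref{L2}, which gives
\begin{equation*}
\left(\mathcal{L}h,h\right)=\frac{1}{4}\int_{\left(\mathbb{R}^{3}\times\mathbb{R}_{+}\right)^{4}}\left(\frac{h}{M^{1/2}}+\frac{h_{\ast}}{M_{\ast}^{1/2}}-\frac{h^{\prime}}{\left(M^{\prime}\right)^{1/2}}-\frac{h_{\ast}^{\prime}}{\left(M_{\ast}^{\prime}\right)^{1/2}}\right)^{2}d\widetilde{A}\geq0,
\end{equation*}
since $d\widetilde{A}$ is a nonnegative measure: the transition probabilities $W$ are nonnegative and the factor relating $d\widetilde{A}$ to $dA$, namely $\left(MM_{\ast}M^{\prime}M_{\ast}^{\prime}\right)^{1/2}\left(II_{\ast}I^{\prime}I_{\ast}^{\prime}\right)^{1/2-\delta/4}$, is strictly positive on the region of integration.

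For the kernel, set $\phi:=h/M^{1/2}$, so that $h/M^{1/2}+h_{\ast}/M_{\ast}^{1/2}-h^{\prime}/(M^{\prime})^{1/2}-h_{\ast}^{\prime}/(M_{\ast}^{\prime})^{1/2}=\phi+\phi_{\ast}-\phi^{\prime}-\phi_{\ast}^{\prime}$, the expression occurring in the definition of a collision invariant. If $\phi$ is a collision invariant, then $\left(\phi+\phi_{\ast}-\phi^{\prime}-\phi_{\ast}^{\prime}\right)W=0$ a.e., so the integrand in the defining formula $\left(\ref{dec2}\right)$ for $\mathcal{L}h$ vanishes a.e.\ and $\mathcal{L}h=0$; here $M^{1/2}\phi\in\mathfrak{h}$ whenever $\phi$ is polynomial, because $M$ carries the factor $I^{\delta/2-1}e^{-I}$ and a Gaussian in $\boldsymbol{\xi}$. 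Since $1,\xi_{x},\xi_{y},\xi_{z},m\left\vert\boldsymbol{\xi}\right\vert^{2}+2I$ are collision invariants (as recorded just before Proposition~\ref{P2}), all five functions in the claimed generating set lie in $\ker\mathcal{L}$. Conversely, if $h\in\ker\mathcal{L}$ then $\left(\mathcal{L}h,h\right)=0$, so the displayed integral vanishes; the strict positivity of the $W$-free factor in $d\widetilde{A}$ then forces $\left(\phi+\phi_{\ast}-\phi^{\prime}-\phi_{\ast}^{\prime}\right)W=0$ a.e., i.e.\ $\phi=h/M^{1/2}$ is a collision invariant. By Proposition~\ref{P2}, $\phi$ is a linear combination of $1,\xi_{x},\xi_{y},\xi_{z},m\left\vert\boldsymbol{\xi}\right\vert^{2}+2I$, hence $h=M^{1/2}\phi$ lies in the span of $\left\{M^{1/2},\xi_{x}M^{1/2},\xi_{y}M^{1/2},\xi_{z}M^{1/2},\left(m\left\vert\boldsymbol{\xi}\right\vert^{2}+2I\right)M^{1/2}\right\}$.

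None of the steps is a genuine obstacle. The two points that deserve a line of care are: (i) that $d\widetilde{A}$ is a nonnegative measure whose density with respect to $dA$ is strictly positive, so that it may be dropped when passing from the vanishing of $\left(\mathcal{L}h,h\right)$ to the pointwise relation $\left(\phi+\phi_{\ast}-\phi^{\prime}-\phi_{\ast}^{\prime}\right)W=0$; and (ii) should one read $\ker\mathcal{L}$ in the weak sense ``$\left(\mathcal{L}h,g\right)=0$ for all admissible $g$'' rather than ``$\mathcal{L}h=0$'', the Cauchy--Schwarz inequality for the nonnegative symmetric form $\left(h,g\right)\mapsto\left(\mathcal{L}h,g\right)$ makes $\left(\mathcal{L}h,h\right)=0$ equivalent to $\left(\mathcal{L}h,g\right)=0$ for all such $g$, so the two readings coincide.
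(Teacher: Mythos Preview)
Your proof is correct and follows essentially the same route as the paper's: both derive symmetry and nonnegativity directly from the symmetrized weak form in Lemma~\ref{L2}, and both characterize $\ker\mathcal{L}$ by observing that $\left(\mathcal{L}h,h\right)=0$ forces $h/M^{1/2}$ to be a collision invariant, then invoke Proposition~\ref{P2}. Your write-up is somewhat more explicit than the paper's---spelling out why $d\widetilde{A}\geq0$, why $M^{1/2}\phi\in\mathfrak{h}$ for polynomial $\phi$, and the Cauchy--Schwarz remark reconciling the weak and pointwise readings of the kernel---but these are elaborations of the same argument, not a different one.
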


\begin{proof}
By Lemma $\ref{L2}$, it is immediate that $\left( \mathcal{L}h,g\right)
=\left( h,\mathcal{L}g\right) $ and $\left( \mathcal{L}h,h\right) \geq 0.$
Furthermore, $h\in \ker \mathcal{L}$ if and only if $\left( \mathcal{L}%
h,h\right) =0$, which will be fulfilled\ if and only if%
\begin{equation*}
\left( \frac{h}{M^{1/2}}+\frac{h_{\ast }}{M_{\ast }^{1/2}}-\frac{h^{\prime }%
}{\left( M^{\prime }\right) ^{1/2}}-\frac{h_{\ast }^{\prime }}{\left(
M_{\ast }^{\prime }\right) ^{1/2}}\right) W(\boldsymbol{\xi },\boldsymbol{%
\xi }_{\ast },I,I_{\ast }\left\vert \boldsymbol{\xi }^{\prime },\boldsymbol{%
\xi }_{\ast }^{\prime },I^{\prime },I_{\ast }^{\prime }\right. )=0\text{
a.e.,}
\end{equation*}%
i.e. if and only if $\dfrac{h}{M^{1/2}}$ is a collision invariant. The last
part of the lemma now follows by Proposition $\ref{P2}.$
\end{proof}

\begin{remark}
Note also that the quadratic term is orthogonal to the kernel of $\mathcal{L}
$, i.e. $\Gamma \left( h,h\right) \in \left( \ker \mathcal{L}\right) ^{\perp
_{\mathcal{\mathfrak{h}}}}$.
\end{remark}

\section{Main results\label{S3}}

In this section the main results, concerning compactness properties in
Theorem \ref{Thm1} and bounds on collision frequencies in Theorem \ref{Thm2}%
, are presented. The proofs of the corollaries are essentially the same as
the corresponding ones in \cite{Be-21a}, but are presented here for
self-containment of the paper.

Assume that for some positive number $\gamma $, such that $0<\gamma <1$,
there is a bound%
\begin{eqnarray}
&&\!\!\!\!\!\!\!\!0\leq \sigma \left( \left\vert \mathbf{g}\right\vert ,\cos
\theta ,I,I_{\ast },I^{\prime },I_{\ast }^{\prime }\right) \mathbf{1}%
_{m\left\vert \mathbf{g}\right\vert ^{2}>4\Delta I}\leq C\frac{\Psi +\Psi
^{\gamma /2}}{\left\vert \mathbf{g}\right\vert ^{2}}\frac{\left( I^{\prime
}I_{\ast }^{\prime }\right) ^{\delta /2-1}}{E^{\delta -1/2}}\mathbf{1}%
_{m\left\vert \mathbf{g}\right\vert ^{2}>4\Delta I}\text{, }  \notag \\
&&\text{ with }E=\frac{m}{4}\left\vert \mathbf{g}\right\vert ^{2}+I+I_{\ast }%
\text{ and }\Psi =\left\vert \mathbf{g}\right\vert \sqrt{\left\vert \mathbf{g%
}\right\vert ^{2}-\frac{4}{m}\Delta I}\text{,}  \label{est1}
\end{eqnarray}%
on the scattering cross section $\sigma $, or, equivalently, the bound%
\begin{equation}
\!0\leq B\left( \left\vert \mathbf{g}\right\vert ,\cos \theta ,I,I_{\ast
},I^{\prime },I_{\ast }^{\prime }\right) \leq CE\left( 1+\frac{1}{\Psi
^{1-\gamma /2}}\right) \mathbf{1}_{m\left\vert \mathbf{g}\right\vert
^{2}>4\Delta I}  \label{est1a}
\end{equation}%
on the collision kernel $\left( \ref{ck1}\right) $, for some positive
constant $C>0$. Then the following result may be obtained.

\begin{theorem}
\label{Thm1}Assume that the scattering cross section $\sigma $, satisfy the
bound $\left( \ref{est1}\right) $ for some positive number $\gamma $, such
that $0<\gamma <1$. Then the operator $K$ given by $\left( \ref{dec1}\right) 
$ is a self-adjoint compact operator on $L^{2}\left( d\boldsymbol{\xi \,}%
\mathbf{\,}dI\right) $.
\end{theorem}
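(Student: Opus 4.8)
The plan is to follow the strategy pioneered for the monatomic case (Grad, Drange, Glassey) and adapted by Bernhoff in \cite{Be-21a}: decompose $K$ into a finite sum of explicit integral operators, compute their kernels by resolving the Dirac delta functions in the transition probability $\left( \ref{tp}\right) $, and verify that each resulting kernel is either square integrable over $\left( \mathbb{R}^{3}\times \mathbb{R}_{+}\right) ^{2}$ (hence Hilbert--Schmidt) or a uniform limit of such kernels (hence, by Lemma $\ref{LGD}$, compact). Self-adjointness follows from Lemma $\ref{L2}$ (taking $g$ so that $K$ is realized as the operator with the symmetrized weak form), or more directly from the symmetry relations $\left( \ref{rel1}\right) $ for $W$, so the real content is the compactness of the three terms appearing in $\left( \ref{dec1}\right) $.

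Concretely, I would split $K=K_{2}-K_{1}$, where $K_{1}$ comes from the gain-type loss term $h_{\ast }/M_{\ast }^{1/2}$ and $K_{2}$ from the two genuine gain terms $h^{\prime }/\left( M^{\prime }\right) ^{1/2}+h_{\ast }^{\prime }/\left( M_{\ast }^{\prime }\right) ^{1/2}$; by the symmetry $\left( \ref{rel1}\right) $ the two pieces of $K_{2}$ give the same operator up to relabelling, so it suffices to treat one of them together with $K_{1}$. For $K_{1}$ I would integrate out $\boldsymbol{\xi }^{\prime },\boldsymbol{\xi }_{\ast }^{\prime },I^{\prime },I_{\ast }^{\prime }$ against the delta functions, using the reduced form of $W$ in which only $\delta _{3}\left( \mathbf{G}-\mathbf{G}^{\prime }\right) $ and a one-dimensional energy delta survive; the Gaussians $\left( MM_{\ast }M^{\prime }M_{\ast }^{\prime }\right) ^{1/2}M^{-1/2}$ collapse, thanks to conservation of momentum and total energy, to something proportional to $\left( M_{\ast }\right) ^{1/2}$ times a bounded factor, and the bound $\left( \ref{est1}\right) $ on $\sigma$ controls the remaining algebraic weights. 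The kernel $k_{1}\left( \boldsymbol{\xi },I,\boldsymbol{\xi }_{\ast },I_{\ast }\right)$ then has a Gaussian decay in $\boldsymbol{\xi }_{\ast }$ and at worst an integrable singularity of the Grad type in $\left\vert \boldsymbol{\xi }-\boldsymbol{\xi }_{\ast }\right\vert$ and in the internal-energy variables, so $\iint k_{1}^{2}<\infty$; the role of the hypotheses $\delta \geq 2$ and $0<\gamma <1$ is precisely to keep these exponents on the integrable side.

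For the gain term $K_{2}$ the change of variables is the delicate one: after using $\delta _{3}\left( \mathbf{G}-\mathbf{G}^{\prime }\right)$ to eliminate $\mathbf{G}^{\prime }$ and the energy delta to eliminate one more scalar, one is left with an integral over $\boldsymbol{\xi }_{\ast }$, an angular variable $\boldsymbol{\omega }$, and the internal-energy splitting parameters $r,R$ from $\left( \ref{df1}\right) $; I would change variables from $\left( \boldsymbol{\xi }_{\ast },\boldsymbol{\omega },r,R\right)$ to the post-collisional $\left( \boldsymbol{\xi }^{\prime },I^{\prime }\right)$ (with $\boldsymbol{\xi }_{\ast }^{\prime },I_{\ast }^{\prime }$ then fixed by the conservation laws and the remaining parameter integrated out), so that $K_{2}$ acquires a bona fide kernel in $\left( \boldsymbol{\xi },I\right)$ and $\left( \boldsymbol{\xi }^{\prime },I^{\prime }\right)$. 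The product of Maxwellians again exponentiates to a Gaussian in a combination of $\boldsymbol{\xi },\boldsymbol{\xi }^{\prime }$ of the form $e^{-a\left\vert \boldsymbol{\xi }-\boldsymbol{\xi }^{\prime }\right\vert ^{2}-b(\,\cdots)}$, and the issue is that the Jacobian and the kernel weights produce a factor that is only borderline square integrable near the collision manifold $\boldsymbol{\xi }=\boldsymbol{\xi }^{\prime }$ and near the boundaries $I^{\prime }=0$, $I_{\ast }^{\prime }=0$, $m\left\vert \mathbf{g}\right\vert ^{2}=4\Delta I$. As in \cite{Glassey, Be-21a}, I expect that this operator is not literally Hilbert--Schmidt but must be approximated: introduce cut-offs (say, restricting to $\left\vert \boldsymbol{\xi }\right\vert \leq N$, $\left\vert \boldsymbol{\xi }-\boldsymbol{\xi }^{\prime }\right\vert \geq 1/N$, $I^{\prime },I_{\ast }^{\prime }\geq 1/N$), show each truncated operator is Hilbert--Schmidt, and show the truncation error tends to zero in operator norm using the Gaussian decay to dominate the tails and the integrability exponents ($\delta \geq 2$, $\gamma <1$) to control the near-singular region; Lemma $\ref{LGD}$ then delivers compactness. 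The main obstacle, and where all the care goes, is exactly this last estimate: bounding the operator norm of the truncation remainder uniformly, which amounts to a Schur-type $\sup_{\boldsymbol{\xi },I}\int \left\vert k_{2}\right\vert$ bound on the complement of the cut-off region, together with the symmetric estimate, and checking that the bound $\left( \ref{est1}\right) $ is strong enough for this. Once the three kernels are handled, compactness of $K$ follows since a finite sum of compact operators is compact, and self-adjointness has already been noted; this proves Theorem $\ref{Thm1}$.
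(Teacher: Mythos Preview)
Your plan is the paper's proof: the same decomposition $K=K_{2}-K_{1}$, with $K_{1}$ shown Hilbert--Schmidt directly and $K_{2}$ handled as an approximate Hilbert--Schmidt operator via Lemma~\ref{LGD}, and self-adjointness read off from the symmetry relations $\left( \ref{rel1}\right)$. The only technical deviation worth noting is that the paper never truncates in the internal energies: instead of your proposed cutoffs $I',I_{\ast }'\geq 1/N$, it exploits the inequality $\left( II_{\ast }\right) ^{\delta /4-1/2}/\widetilde{E}^{\,\delta /2+s-1/2}\leq I^{\delta /4-1/2-\kappa /2}I_{\ast }^{-\delta /4-s+\kappa /2}$ (valid for $0\leq \kappa \leq \delta +2s-1$) and chooses the free parameters $s,\kappa$ so that the resulting $I,I_{\ast }$ weights are already integrable (e.g.\ of the form $\mathbf{1}_{I\leq 1}+I^{-5/4}\mathbf{1}_{I\geq 1}$), so the cutoff set $\mathfrak{h}_{N}$ involves only $\left\vert \boldsymbol{\xi }-\boldsymbol{\xi }_{\ast }\right\vert \geq 1/N$ and $\left\vert \boldsymbol{\xi }\right\vert \leq N$.
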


The proof of Theorem \ref{Thm1} will be addressed in Section $\ref{PT1}$.

\begin{corollary}
\label{Cor1}The linearized collision operator $\mathcal{L}$, with scattering
cross section satisfying $\left( \ref{est1}\right) $, is a closed, densely
defined, self-adjoint operator on $L^{2}\left( d\boldsymbol{\xi \,}\mathbf{\,%
}dI\right) $.
\end{corollary}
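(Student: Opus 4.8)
The plan is to derive Corollary \ref{Cor1} directly from the structural decomposition $\mathcal{L}=\upsilon - K$ established in $\left( \ref{dec2}\right)$ together with the compactness of $K$ from Theorem \ref{Thm1} and the symmetry and nonnegativity already recorded in Proposition \ref{Prop1}. First I would take as the natural domain of $\mathcal{L}$ the weighted space $D(\mathcal{L})=\left\{ h\in L^{2}\left( d\boldsymbol{\xi }\,dI\right) :\upsilon h\in L^{2}\left( d\boldsymbol{\xi }\,dI\right) \right\}$, i.e. the maximal domain on which the multiplication operator $\upsilon$ acts; since $\upsilon$ is a nonnegative measurable function, multiplication by $\upsilon$ on this domain is a (possibly unbounded) self-adjoint operator, and it is densely defined because $D(\upsilon)$ contains every $h$ of compact support in $\boldsymbol{\xi }$ and $I$, which is dense in $L^{2}$. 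Closedness of the multiplication operator is the standard fact that $\upsilon$ is closed on its maximal domain.

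The second step is to add the bounded perturbation $-K$. By Theorem \ref{Thm1}, $K$ is a self-adjoint compact operator on $L^{2}\left( d\boldsymbol{\xi }\,dI\right)$, in particular bounded and everywhere defined. A bounded self-adjoint operator added to a (densely defined, closed) self-adjoint operator yields again a densely defined, closed, self-adjoint operator on the same domain: this is the Kato–Rellich theorem in its trivial case of a relatively bounded perturbation with relative bound $0$ (indeed bound $0$ since $K$ is genuinely bounded), and self-adjointness of the sum on $D(\upsilon)$ follows because $D(\upsilon \pm i)=D((\upsilon-K)\pm i)$ by boundedness of $K$ and the range arguments are immediate. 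Hence $\mathcal{L}=\upsilon-K$ is closed, densely defined, and self-adjoint on $D(\upsilon)$. The symmetry is of course consistent with Proposition \ref{Prop1}, which already gives $\left( \mathcal{L}h,g\right) =\left( h,\mathcal{L}g\right)$ on a suitable dense set; the content added here is that this symmetric operator is in fact self-adjoint, which is exactly what the decomposition into a self-adjoint multiplier plus a bounded self-adjoint operator buys us.

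I expect the only genuine point requiring care — and the main obstacle, such as it is — to be the verification that $\upsilon$ is indeed a well-defined, densely defined, self-adjoint multiplication operator, i.e. that $\upsilon\left( \boldsymbol{\xi },I\right)$ is finite almost everywhere (so that multiplication by it makes sense pointwise) and not identically $+\infty$ on any set of positive measure. Under the hypothesis $\left( \ref{est1}\right)$ this follows from the bounds on the collision frequency, and in fact the coercivity estimates proved in Section \ref{PT2} (Theorem \ref{Thm2}) show much more, namely $\upsilon$ is comparable to a polynomial weight; but even the weaker statement that $\upsilon<\infty$ a.e. suffices for self-adjointness, and it is immediate from $\left( \ref{est1a}\right)$ after carrying out the $\boldsymbol{\xi }_{\ast}$, $\boldsymbol{\omega }$, $r$, $R$ integrations against the Gaussian factor $M_{\ast}$. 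Once this is in hand, everything else is the abstract operator-theoretic boilerplate sketched above, and the proof is complete. The argument is identical to the corresponding one in \cite{Be-21a}, and is included only for self-containment.
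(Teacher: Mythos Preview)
Your proposal is correct and follows essentially the same route as the paper: write $\mathcal{L}=\Lambda-K$ with $\Lambda$ the (closed, densely defined, self-adjoint) multiplication operator by $\upsilon$, invoke Theorem~\ref{Thm1} to get $K$ bounded and self-adjoint, and conclude via the standard fact that a bounded self-adjoint perturbation of a self-adjoint operator is again self-adjoint on the same domain (the paper cites Kato, Chapter~V, Theorem~4.3; you phrase it as the trivial case of Kato--Rellich). Your added remark that one must check $\upsilon<\infty$ a.e.\ under $\left(\ref{est1}\right)$ is a fair point of care that the paper leaves implicit.
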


\begin{proof}
The multiplication operator $\Lambda $, where $\Lambda f=vf$, is a closed,
densely defined, self-adjoint operator. Hence, by Theorem \ref{Thm1}, $%
\mathcal{L}=\Lambda -K$ is closed, as the sum of a closed and a bounded
operator, densely defined, since the domains of the linear operators $%
\mathcal{L}$ and $\Lambda $ are equal, $D(\mathcal{L})=D(\Lambda )$, and
self-adjoint, since the set of self-adjoint operators is closed under
addition of bounded self-adjoint operators, see Theorem 4.3 of Chapter V in 
\cite{Kato}.
\end{proof}

\begin{remark}
The collision kernels (cf. Model 1-3 in \cite{GP-20})

1)%
\begin{equation*}
B=bE^{\alpha /2}
\end{equation*}

2)%
\begin{equation*}
B=b\left( R^{\alpha /2}\left\vert \mathbf{g}\right\vert ^{\alpha
}+(1-R\right) ^{\alpha /2}\left( \frac{I+I_{\ast }}{m}\right) ^{\alpha
/2})\leq \frac{2^{\alpha }+1}{m^{\alpha /2}}bE^{\alpha /2}
\end{equation*}

3)%
\begin{eqnarray*}
B&=&b\left( R^{\alpha /2}\left\vert \mathbf{g}\right\vert ^{\alpha }+\left(
r\left( 1-R\right) \frac{I}{m}\right) ^{\alpha /2}+\left( \left( 1-r\right)
\left( 1-R\right) \frac{I_{\ast }}{m}\right) ^{\alpha /2}\right) \\
&\leq& \frac{%
2^{\alpha }+2}{m^{\alpha /2}}bE^{\alpha /2}
\end{eqnarray*}%
where $\left\{ R=\dfrac{m\left\vert \mathbf{g}^{\prime }\right\vert ^{2}}{4E}%
,r=\dfrac{I^{\prime }}{(1-R)E}\right\} \in \left[ 0,1\right] ^{2}$, satisfy
the bound $\left( \ref{est1a}\right) $ for positive numbers $\alpha $ less
or equal to $2$, $0<\alpha \leq 2$, and bounded functions $b=b\left( \cos
\theta \right) $. Indeed, $E^{\alpha /2}$ is bounded above by $E$ for $%
\alpha =2$, or, if $E\geq 1$, while for $\alpha \in \left( 0,2\right) $, $%
E^{\alpha /2}=\dfrac{E}{E^{1-\alpha /2}}<\dfrac{E}{E^{1-\gamma /2}}\leq 
\dfrac{E}{\Psi ^{1-\gamma /2}}$ for $\gamma =\dfrac{\alpha }{2}\in \left(
0,1\right) $ if $E<1$.
\end{remark}

\begin{remark}
By slight modifications in the proof of Theorem \ref{Thm1} in Section $\ref%
{PT1}$\ one may replace the bounds $\left( \ref{est1}\right) $ and $\left( %
\ref{est1a}\right) $, with%
\begin{equation*}
0\leq \sigma \mathbf{1}_{m\left\vert \mathbf{g}\right\vert ^{2}>4\Delta
I}\leq C\frac{\Psi +\Psi ^{\gamma /2}}{\left\vert \mathbf{g}\right\vert ^{2}}%
\frac{1+E^{\eta -1}}{E^{\delta -1/2}}\left( I^{\prime }I_{\ast }^{\prime
}\right) ^{\delta /2-1}\mathbf{1}_{m\left\vert \mathbf{g}\right\vert
^{2}>4\Delta I}
\end{equation*}%
and%
\begin{equation*}
\!0\leq B\leq C\left( E^{\eta }+E\right) \left( 1+\frac{1}{\Psi ^{1-\gamma
/2}}\right) \mathbf{1}_{m\left\vert \mathbf{g}\right\vert ^{2}>4\Delta I}%
\text{,}
\end{equation*}%
for any $1<\eta <\dfrac{\delta +1}{2}$, respectively.
\end{remark}

Now consider the scattering cross section%
\begin{equation}
\sigma =C\dfrac{\sqrt{\left\vert \mathbf{g}\right\vert ^{2}-\frac{4}{m}%
\Delta I}}{\left\vert \mathbf{g}\right\vert E^{\delta +\left( \alpha
-1\right) /2}}\left( I^{\prime }I_{\ast }^{\prime }\right) ^{\delta /2-1}%
\text{ if }\left\vert \mathbf{g}\right\vert ^{2}>\frac{4}{m}\Delta I
\label{e1}
\end{equation}%
or, equivalently, the collision kernel $\left( \ref{ck1}\right) $%
\begin{equation}
\!B=CE^{1-\alpha /2}1_{E>0}  \label{e1a}
\end{equation}%
for some positive constant $C>0$ and nonnegative number $\alpha $ less than $%
2$, $0\leq \alpha <2$ - cf. hard sphere models for $\alpha =1$.

In fact, it would be enough with the bounds, if $\left\vert \mathbf{g}%
\right\vert ^{2}>\frac{4}{m}\Delta I$, 
\begin{equation}
C_{-}\dfrac{\sqrt{\left\vert \mathbf{g}\right\vert ^{2}-\frac{4}{m}\Delta I}%
}{\left\vert \mathbf{g}\right\vert E^{\delta +\left( \alpha -1\right) /2}}%
\left( I^{\prime }I_{\ast }^{\prime }\right) ^{\delta /2-1}\leq \sigma \leq
C_{+}\dfrac{\sqrt{\left\vert \mathbf{g}\right\vert ^{2}-\frac{4}{m}\Delta I}%
}{\left\vert \mathbf{g}\right\vert E^{\delta +\left( \alpha -1\right) /2}}%
\left( I^{\prime }I_{\ast }^{\prime }\right) ^{\delta /2-1}  \label{ie1}
\end{equation}%
on the scattering cross sections, or, equivalently, the bounds%
\begin{equation}
\!C_{-}E^{1-\alpha /2}1_{E>0}\leq B\leq C_{+}E^{1-\alpha /2}1_{E>0}
\label{ie1a}
\end{equation}%
on the collision kernel $\left( \ref{ck1}\right) $, for some positive
constants $C_{\pm }>0$ and nonnegative number $\alpha $ less than $2$, $%
0\leq \alpha <2$ - cf. hard potential with cut-off models, with "super hard"
potentials for $0\leq \alpha <1$.

\begin{theorem}
\label{Thm2} The linearized collision operator $\mathcal{L}$, with
scattering cross section $\left( \ref{e1}\right) $ (or $\left( \ref{ie1}%
\right) $), can be split into a positive multiplication operator $\Lambda $,
where $\Lambda f=vf$, with $\nu =\nu (\left\vert \boldsymbol{\xi }%
\right\vert ,I)$, minus a compact operator $K$ on $L^{2}\left( d\boldsymbol{%
\xi \,}\mathbf{\,}dI\right) $%
\begin{equation}
\mathcal{L}=\Lambda -K,  \label{dec3}
\end{equation}%
such that there exist positive numbers $\nu _{-}$ and $\nu _{+}$, $0<\nu
_{-}<\nu _{+}$, such that for any positive number $\varepsilon >0$ 
\begin{equation}
\nu _{-}\left( 1+\left\vert \boldsymbol{\xi }\right\vert +\sqrt{I}\right)
^{2-\alpha }\leq \nu (\left\vert \boldsymbol{\xi }\right\vert ,I)\leq \nu
_{+}\left( 1+\left\vert \boldsymbol{\xi }\right\vert +\sqrt{I}\right)
^{2-\alpha +\varepsilon }\text{ for all }\boldsymbol{\xi }\in \mathbb{R}^{3}%
\text{.}  \label{ine1}
\end{equation}
\end{theorem}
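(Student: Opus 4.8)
The splitting $\mathcal{L}=\Lambda-K$ with $K$ compact is inherited from Theorem \ref{Thm1}: the cross section \eqref{e1} — and the upper one in \eqref{ie1} — satisfies \eqref{est1}, because $B=CE^{1-\alpha/2}$ with $0\leq\alpha<2$ is a kernel of the type treated in the Remark following Corollary \ref{Cor1} (with $2-\alpha\in(0,2]$ in the role of the exponent there). Concretely, on the support of $\mathbf{1}_{m\left\vert\mathbf{g}\right\vert^{2}>4\Delta I}$ both $\left\vert\mathbf{g}\right\vert^{2}$ and $\left\vert\mathbf{g}\right\vert^{2}-\tfrac{4}{m}\Delta I$ are $\leq\tfrac{4}{m}E$ (with $E=\tfrac{m}{4}\left\vert\mathbf{g}\right\vert^{2}+I+I_{\ast}$), hence $\Psi\leq\tfrac{4}{m}E$ and $E^{1-\alpha/2}\leq C\,E\,(1+\Psi^{-(1-\gamma/2)})$ for any $\gamma\in\bigl(0,\min\{2-\alpha,1\}\bigr)$, which is the bound \eqref{est1a}. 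Theorem \ref{Thm1} then gives the self-adjoint compact operator $K$ and the decomposition \eqref{dec3}, with $\Lambda$ the multiplication by the collision frequency $\nu$ of \eqref{dec1}.

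The core of the proof is an explicit evaluation of $\nu$, as in the analogous result of \cite{Be-21a}. I would start from \eqref{dec1} with the transformed transition probability displayed after \eqref{rel1} (i.e.\ \eqref{tp} reduced by \eqref{mr} and properties of the Dirac masses), change variables $\{\boldsymbol{\xi}^{\prime},\boldsymbol{\xi}_{\ast}^{\prime}\}\to\{\mathbf{g}^{\prime},\mathbf{G}^{\prime}\}$ and then $\mathbf{g}^{\prime}\to(\left\vert\mathbf{g}^{\prime}\right\vert,\boldsymbol{\omega})$, and integrate out $\delta_{3}(\mathbf{G}-\mathbf{G}^{\prime})$ over $\mathbf{G}^{\prime}$ and the remaining scalar Dirac mass over $\left\vert\mathbf{g}^{\prime}\right\vert$; the constraint $m\left\vert\mathbf{g}\right\vert^{2}>4\Delta I$ turns into $I^{\prime}+I_{\ast}^{\prime}<\mathcal{E}$, where $\mathcal{E}=\tfrac{m}{4}\left\vert\mathbf{g}\right\vert^{2}+I+I_{\ast}$. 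Inserting \eqref{e1} (which has no $\cos\theta$-dependence, so the $\boldsymbol{\omega}$-integral contributes only $4\pi$) and using $\sqrt{\left\vert\mathbf{g}\right\vert^{2}-\tfrac{4}{m}\Delta I}=\tfrac{2}{\sqrt{m}}\sqrt{\mathcal{E}-I^{\prime}-I_{\ast}^{\prime}}$, the $(I^{\prime},I_{\ast}^{\prime})$-integral, after the scaling $I^{\prime}=\mathcal{E}s$, $I_{\ast}^{\prime}=\mathcal{E}t$, becomes $\mathcal{E}^{\delta+1/2}\int_{s,t\geq0,\,s+t<1}(st)^{\delta/2-1}(1-s-t)^{1/2}\,ds\,dt$, a convergent Beta-type integral since $\delta\geq2$. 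All the powers of $\mathcal{E}$ then cancel and one is left with
\begin{equation*}
\nu(\left\vert\boldsymbol{\xi}\right\vert,I)=c_{0}\int_{\mathbb{R}^{3}\times\mathbb{R}_{+}}\Big(\tfrac{m}{4}\left\vert\boldsymbol{\xi}-\boldsymbol{\xi}_{\ast}\right\vert^{2}+I+I_{\ast}\Big)^{1-\alpha/2}M_{\ast}\,d\boldsymbol{\xi}_{\ast}\,dI_{\ast}
\end{equation*}
for an explicit constant $c_{0}>0$ (with $\leq/\geq$ and constants $c_{0}^{\pm}>0$ under the two-sided hypothesis \eqref{ie1}); the dependence on $\boldsymbol{\xi}$ only through $\left\vert\boldsymbol{\xi}\right\vert$ follows from the rotational invariance of $M_{\ast}$.

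The bounds \eqref{ine1} then follow from elementary estimates on this integral, with $s:=1-\tfrac{\alpha}{2}\in(0,1]$. For the upper bound, $\tfrac{m}{4}\left\vert\boldsymbol{\xi}-\boldsymbol{\xi}_{\ast}\right\vert^{2}+I+I_{\ast}\leq C(1+\left\vert\boldsymbol{\xi}\right\vert^{2}+I)+C(\left\vert\boldsymbol{\xi}_{\ast}\right\vert^{2}+I_{\ast})$ together with the subadditivity $(a+b)^{s}\leq a^{s}+b^{s}$ bounds $\nu$ by $C(1+\left\vert\boldsymbol{\xi}\right\vert^{2}+I)^{s}$ times a finite moment of $M_{\ast}$, plus a finite constant; since $1+\left\vert\boldsymbol{\xi}\right\vert^{2}+I\leq(1+\left\vert\boldsymbol{\xi}\right\vert+\sqrt{I})^{2}$ and $1+\left\vert\boldsymbol{\xi}\right\vert+\sqrt{I}\geq1$, this already gives the upper bound in \eqref{ine1} with $\varepsilon=0$, a fortiori for every $\varepsilon>0$. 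For the lower bound, restrict the $\boldsymbol{\xi}_{\ast}$-integration to $\{\left\vert\boldsymbol{\xi}_{\ast}\right\vert\leq1\}$: there the factor $\tfrac{m}{4}\left\vert\boldsymbol{\xi}-\boldsymbol{\xi}_{\ast}\right\vert^{2}+I+I_{\ast}$ is $\geq\tfrac{m}{16}\left\vert\boldsymbol{\xi}\right\vert^{2}$ when $\left\vert\boldsymbol{\xi}\right\vert\geq2$ and always $\geq I$ and $\geq I_{\ast}$, which gives $\nu\gtrsim\left\vert\boldsymbol{\xi}\right\vert^{2s}$ for $\left\vert\boldsymbol{\xi}\right\vert\geq2$, $\nu\gtrsim I^{s}$, and $\nu$ bounded below by a positive constant; combining these via $(1+\left\vert\boldsymbol{\xi}\right\vert+\sqrt{I})^{2s}\leq C(1+\left\vert\boldsymbol{\xi}\right\vert^{2s}+I^{s})$ yields $\nu\geq\nu_{-}(1+\left\vert\boldsymbol{\xi}\right\vert+\sqrt{I})^{2-\alpha}$. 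Since $\nu>0$ everywhere, $\Lambda f=\nu f$ is a positive multiplication operator.

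The step I expect to be the main obstacle is the reduction in the second paragraph: performing the successive changes of variables and Dirac-mass integrations with all Jacobians correct, so that $\nu$ collapses to the displayed double integral, and in particular using that the energy variable entering \eqref{e1} is the collision-invariant $\tfrac{m}{4}\left\vert\mathbf{g}\right\vert^{2}+I+I_{\ast}$ (as in \eqref{est1}), which is precisely what keeps the inner $(I^{\prime},I_{\ast}^{\prime})$-integral convergent. Everything after the closed form for $\nu$ is routine.
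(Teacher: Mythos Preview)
Your proposal is correct and follows the same overall route as the paper: reduce $\nu$ by integrating the Dirac masses in the transformed transition probability, then extract the growth in $(\left\vert\boldsymbol{\xi}\right\vert,I)$ from the remaining integral against $M_{\ast}$. The paper arrives at the same intermediate expression
\[
\upsilon=C\int_{\mathbb{R}^{3}\times(\mathbb{R}_{+})^{3}}e^{-I_{\ast}}e^{-m\left\vert\boldsymbol{\xi}_{\ast}\right\vert^{2}/2}\,\frac{\sqrt{E-(I^{\prime}+I_{\ast}^{\prime})}}{E^{\delta+(\alpha-1)/2}}\,(I_{\ast}I^{\prime}I_{\ast}^{\prime})^{\delta/2-1}\,\mathbf{1}_{I^{\prime}+I_{\ast}^{\prime}<E}\,d\boldsymbol{\xi}_{\ast}\,dI_{\ast}\,dI^{\prime}\,dI_{\ast}^{\prime},
\]
and then treats the $(I^{\prime},I_{\ast}^{\prime})$-integral by two different crude estimates: for the lower bound it restricts to $I^{\prime}+I_{\ast}^{\prime}\leq E/2$ and integrates each variable over $[0,E/4]$; for the upper bound it splits $[0,\infty)=[0,1]\cup[1,\infty)$ in each variable and controls the tails by $E^{-\delta/2-\varepsilon/4}$, which is where the $\varepsilon>0$ in \eqref{ine1} enters.

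Your treatment is cleaner and in fact slightly sharper: the scaling $I^{\prime}=Es$, $I_{\ast}^{\prime}=Et$ evaluates the $(I^{\prime},I_{\ast}^{\prime})$-integral \emph{exactly} as $E^{1-\alpha/2}$ times the convergent Beta-type constant $\int_{s+t<1}(st)^{\delta/2-1}(1-s-t)^{1/2}\,ds\,dt$, yielding the closed form for $\nu$ you display. From there your subadditivity argument gives the upper bound with $\varepsilon=0$, which is stronger than the theorem as stated; the paper's splitting only reaches $\varepsilon>0$. Your lower-bound argument (restrict $\left\vert\boldsymbol{\xi}_{\ast}\right\vert$ and take maxima of the three obvious minorants) is equivalent to the paper's case split $\left\vert\boldsymbol{\xi}\right\vert\lessgtr1$.
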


The decomposition $\left( \ref{dec3}\right) $ follows by decomposition $%
\left( \ref{dec2}\right) ,\left( \ref{dec1}\right) $ and Theorem \ref{Thm1},
while the bounds $\left( \ref{ine1}\right) $ on the collision frequency will
be proven in Section $\ref{PT2}$.

\begin{corollary}
\label{Cor2}The linearized collision operator $\mathcal{L}$, with scattering
cross section $\left( \ref{e1}\right) $ (or $\left( \ref{ie1}\right) $), is
a Fredholm operator.
\end{corollary}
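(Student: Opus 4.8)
The plan is to read Fredholmness of $\mathcal{L}$ directly off the structure supplied by Theorem \ref{Thm2}; essentially nothing beyond elementary operator theory is needed. By Theorem \ref{Thm2}, under the scattering cross section $(\ref{e1})$ (or the two-sided bounds $(\ref{ie1})$) one has the decomposition $\mathcal{L}=\Lambda-K$ with $K$ compact on $L^{2}(d\boldsymbol{\xi}\,dI)$ and with the collision frequency $\nu$ obeying $(\ref{ine1})$; moreover $D(\mathcal{L})=D(\Lambda)$, as already recorded in the proof of Corollary \ref{Cor1}.

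First I would exploit that $0\leq\alpha<2$, so the exponent $2-\alpha$ appearing in the lower bound of $(\ref{ine1})$ is strictly positive, whence $\nu(\left\vert\boldsymbol{\xi}\right\vert,I)\geq\nu_{-}>0$ for all $(\boldsymbol{\xi},I)$. Consequently the (self-adjoint, positive) multiplication operator $\Lambda f=\nu f$ is boundedly invertible: its inverse is the bounded multiplication operator $\Lambda^{-1}f=\nu^{-1}f$, with $\left\Vert\Lambda^{-1}\right\Vert\leq\nu_{-}^{-1}$. In particular $\Lambda$ is bijective from $D(\Lambda)$ onto $L^{2}(d\boldsymbol{\xi}\,dI)$, hence a Fredholm operator of index zero.

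Next I would invoke stability of the class of Fredholm operators under additive compact perturbations (Atkinson's theorem; this is precisely the property recalled in the Introduction). Since $K$ is compact by Theorem \ref{Thm2} (equivalently Theorem \ref{Thm1}) and $\mathcal{L}=\Lambda-K$ acts on the common domain $D(\mathcal{L})=D(\Lambda)$, it follows that $\mathcal{L}$ is a Fredholm operator; by homotopy invariance of the index along $t\mapsto\Lambda-tK$ it is of index zero, consistently with Proposition \ref{Prop1}, which exhibits $\ker\mathcal{L}$ as five-dimensional.

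I do not anticipate a genuine obstacle: the statement is a formal consequence of Theorems \ref{Thm1} and \ref{Thm2}. The only points deserving a line of care are checking that the cross section $(\ref{e1})$, resp. the bounds $(\ref{ie1})$, do fall within the hypotheses of Theorem \ref{Thm1} so that $K$ is genuinely compact, and recording explicitly that $\mathcal{L}$ and $\Lambda$ share the same domain, so that the compact-perturbation argument applies verbatim rather than only in a form sense.
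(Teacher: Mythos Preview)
Your proof is correct and follows essentially the same approach as the paper: use the lower bound in $(\ref{ine1})$ to see that $\Lambda$ is coercive (equivalently, boundedly invertible), hence Fredholm, and then invoke stability of the Fredholm class under compact perturbations to conclude for $\mathcal{L}=\Lambda-K$. The paper's proof is just a terse two-sentence version of this, citing Kato for the perturbation statement; your additional remarks on the index and the common domain are correct embellishments but not needed for the corollary as stated.
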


\begin{proof}
By Theorem \ref{Thm2} the multiplication operator $\Lambda $ is coercive,
why it is a Fredholm operator. Furthermore, the set of Fredholm operators is
closed under addition of compact operators, see Theorem 5.26 of Chapter IV
in \cite{Kato} and its proof, so, by Theorem \ref{Thm2}, $\mathcal{L}$ is a
Fredholm operator.
\end{proof}

For hard sphere like models and "super hard" potential like models, we
obtain the following result.

\begin{corollary}
\label{Cor3}For the linearized collision operator $\mathcal{L}$, with
scattering cross section $\left( \ref{e1}\right) $ (or $\left( \ref{ie1}%
\right) $) with $0\leq \alpha \leq 1$, there exists a positive number $%
\lambda $, $0<\lambda <1$, such that 
\begin{equation*}
\left( h,\mathcal{L}h\right) \geq \lambda \left( h,\nu (\left\vert 
\boldsymbol{\xi }\right\vert ,I)h\right) \geq \lambda \nu _{-}\left(
h,\left( 1+\left\vert \boldsymbol{\xi }\right\vert \right) h\right)
\end{equation*}%
for all $h\in D\left( \mathcal{L}\right) \cap \mathrm{Im}\mathcal{L}$.
\end{corollary}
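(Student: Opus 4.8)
The plan is to combine a spectral gap for $\mathcal{L}$ on $\left( \ker \mathcal{L}\right) ^{\perp }$ with an absorption argument that upgrades the resulting $L^{2}$-bound to the weighted bound involving $\nu $, and then to compare $\nu $ with $1+\left\vert \boldsymbol{\xi }\right\vert $ using $0\leq \alpha \leq 1$.

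First I would collect the structural facts already available. By Proposition $\ref{Prop1}$, $\mathcal{L}$ is self-adjoint and nonnegative, and $\ker \mathcal{L}$ is the five-dimensional space generated by the collision invariants times $M^{1/2}$; by Corollary $\ref{Cor2}$, $\mathcal{L}$ is a Fredholm operator, so $\mathrm{Im}\,\mathcal{L}$ is closed and hence $\mathrm{Im}\,\mathcal{L}=\left( \ker \mathcal{L}\right) ^{\perp }$. In particular any $h\in D(\mathcal{L})\cap \mathrm{Im}\,\mathcal{L}$ is orthogonal to $\ker \mathcal{L}$. Since $K$ is compact by Theorem $\ref{Thm1}$, Weyl's theorem gives $\sigma _{\mathrm{ess}}(\mathcal{L})=\sigma _{\mathrm{ess}}(\Lambda )$, and $\sigma _{\mathrm{ess}}(\Lambda )$ is the essential range of $\nu $, which by the lower bound in $\left( \ref{ine1}\right) $ (the base being $\geq 1$ and the exponent $2-\alpha \geq 0$) is contained in $[\nu _{-},\infty )$. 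Hence $\sigma (\mathcal{L})\cap \lbrack 0,\nu _{-})$ consists of at most isolated eigenvalues of finite multiplicity; as $\mathcal{L}\geq 0$ with $0$ an eigenvalue, the spectrum above $0$ begins at some $c_{0}>0$, and the spectral theorem yields $\left( \mathcal{L}h,h\right) \geq c_{0}\left( h,h\right) $ for all $h\in \left( \ker \mathcal{L}\right) ^{\perp }$.

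With the gap in hand I would perform the absorption step. Write $\left( \mathcal{L}h,h\right) =\left( \nu h,h\right) -\left( Kh,h\right) $. Splitting $h=\mathbf{1}_{\nu \leq N}h+\mathbf{1}_{\nu >N}h$, using $\left\Vert \mathbf{1}_{\nu >N}h\right\Vert ^{2}\leq N^{-1}\left( \nu h,h\right) $ and Young's inequality, one gets for any $N>0$ and $\beta >0$
\begin{equation*}
\left( Kh,h\right) \leq \frac{\beta }{4}\left( \nu h,h\right) +C_{N,\beta }\left( h,h\right) ,\qquad C_{N,\beta }=\left\Vert K\right\Vert +\frac{\left\Vert K\right\Vert ^{2}}{\beta N}.
\end{equation*}
Fixing $\beta \in (0,4)$ and any $N>0$, and then inserting $\left( h,h\right) \leq c_{0}^{-1}\left( \mathcal{L}h,h\right) $ for $h\in \left( \ker \mathcal{L}\right) ^{\perp }$, this rearranges to
\begin{equation*}
\left( \mathcal{L}h,h\right) \geq \lambda \left( \nu h,h\right) ,\qquad \lambda :=\frac{1-\beta /4}{1+C_{N,\beta }/c_{0}}\in (0,1),
\end{equation*}
which is the first asserted inequality. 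The second inequality is then immediate from the lower bound in $\left( \ref{ine1}\right) $: since $0\leq \alpha \leq 1$ we have $2-\alpha \geq 1$, so $\left( 1+\left\vert \boldsymbol{\xi }\right\vert +\sqrt{I}\right) ^{2-\alpha }\geq 1+\left\vert \boldsymbol{\xi }\right\vert $, whence $\left( \nu h,h\right) \geq \nu _{-}\left( \left( 1+\left\vert \boldsymbol{\xi }\right\vert \right) h,h\right) $.

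I expect the only genuinely delicate point to be the spectral gap: it relies simultaneously on the compactness of $K$ (Theorem $\ref{Thm1}$), so that the essential spectrum is unchanged, and on $\nu $ being bounded below by a positive constant (Theorem $\ref{Thm2}$), so that $0$ is separated from $\sigma _{\mathrm{ess}}(\mathcal{L})$; once $0$ is known to be an isolated eigenvalue, the rest is bookkeeping. The absorption estimate and the final comparison with $1+\left\vert \boldsymbol{\xi }\right\vert $ are routine, and the argument is essentially the one used for the corresponding corollary in \cite{Be-21a}.
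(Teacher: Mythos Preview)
Your proof is correct and follows essentially the same approach as the paper: both obtain a spectral gap $(\mathcal{L}h,h)\geq \nu_0(h,h)$ on $(\ker\mathcal{L})^\perp$ and combine it with the boundedness of $K$ to upgrade to the $\nu$-weighted estimate. The only difference is that the paper's absorption step is more direct: since $(Kh,h)\leq c_K(h,h)$ already (no splitting in $\nu$ needed), one simply writes $(h,\mathcal{L}h)=(1-\lambda)(h,\mathcal{L}h)+\lambda(h,\nu h)-\lambda(h,Kh)$ and chooses $\lambda=\nu_0/(\nu_0+c_K)$, which is precisely the limit $\beta\to 0$, $N\to\infty$ of your formula.
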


\begin{proof}
Let $h\in D\left( \mathcal{L}\right) \cap \left( \mathrm{ker}\mathcal{L}%
\right) ^{\perp }=D\left( \mathcal{L}\right) \cap \mathrm{Im}\mathcal{L}$.
As a Fredholm operator, $\mathcal{L}$ is closed with a closed range, and as
a compact operator, $K$ is bounded, and so there are positive constants $\nu
_{0}>0$ and $c_{K}>0$, such that%
\begin{equation*}
(h,Lh)\geq \nu _{0}(h,h)\text{ and }(h,Kh)\leq c_{K}(h,h).
\end{equation*}%
Let $\lambda =\dfrac{\nu _{0}}{\nu _{0}+c_{K}}$. Then the corollary follows,
since 
\begin{eqnarray*}
(h,Lh) &=&(1-\lambda )(h,\mathcal{L}h)+\lambda (h,(\nu (|\boldsymbol{\xi }|,I)-K)h) \\
&\geq &(1-\lambda )\nu _{0}(h,h)+\lambda (h,\nu (|\boldsymbol{\xi }%
|,I)h)-\lambda c_{K}(h,h) \\
&=&(\nu _{0}-\lambda (\nu _{0}+c_{K}))(h,h)+\lambda (h,\nu (|\boldsymbol{\xi 
}|,I)h) \\
&=&\lambda (h,\nu (|\boldsymbol{\xi }|,I)h)\text{.}
\end{eqnarray*}
\end{proof}

\begin{remark}
For hard sphere like models, as well as, "super hard" potential like models,
the linearized collision operator satisfies all the properties of the
general linear operator in the abstract half-space problem considered in 
\cite{Be-21}, under the assumption $B=\xi _{x}$ for the linear operator $B$
in \cite{Be-21}, by Proposition \ref{Prop1} and Corollaries \ref{Cor1}-\ref%
{Cor3}. Indeed, by Proposition \ref{Prop1} and Corollaries \ref{Cor1}-\ref%
{Cor3}, with $0\leq \alpha \leq 1$, in the expressions $\left( \ref{e1}%
\right) $ and $\left( \ref{e1a}\right) $, or, the bounds $\left( \ref{ie1}%
\right) $ and $\left( \ref{ie1a}\right) $, for the scattering cross section
and collision kernel, respectively, the linearized collision operator will
be a nonnegative self-adjoint Fredholm operator on the real Hilbert space $%
\mathcal{\mathfrak{h}}=L^{2}\left( d\boldsymbol{\xi \,}\mathbf{\,}dI\right) $%
, and moreover, there exists a positive number $\mu $, $\mu >0$, such that 
\begin{equation*}
\left( h,\mathcal{L}h\right) \geq \mu \left( h,\left( 1+\left\vert \xi
_{x}\right\vert \right) h\right) \text{ for all }h\in D\left( \mathcal{L}%
\right) \cap \mathrm{Im}\mathcal{L}.
\end{equation*}
\end{remark}

\section{\label{PT1}Compactness}

This section concerns the proof of Theorem \ref{Thm1}. Note that in the
proof the kernels are rewritten in such a way that $\boldsymbol{\xi }_{\ast
} $ and $I_{\ast }$ - and not $\boldsymbol{\xi }^{\prime }$ and $I^{\prime }$%
, or, $\boldsymbol{\xi }_{\ast }^{\prime }$ and $I_{\ast }^{\prime }$ -
always will be the arguments of the distribution functions. Then there will
be essentially two different types of kernels; either $\left( \boldsymbol{%
\xi }_{\ast },I_{\ast }\right) $ are arguments in the loss term (like $%
\left( \boldsymbol{\xi },I\right) $) or in the gain term (unlike $\left( 
\boldsymbol{\xi },I\right) $) of the collision operator. The kernel of the
term from the loss part of the collision operator will be shown\ to be
Hilbert-Schmidt in a quite direct way, while the kernel of the terms from
the gain part of the collision operator will be shown to be approximately
Hilbert-Schmidt, in the sense of Lemma \ref{LGD} below, or, equivalently, a
uniform limit of Hilbert-Schmidt integral operators. In fact, with similar
arguments as in the proof below one may show that if the number of internal
degrees of freedom $\delta $ is greater than $5/2$, $\delta >5/2$, then,
under the assumption $\left( \ref{est1}\right) $, the terms from the gain
part are Hilbert-Schmidt integral operators as well.

Denote, for any (nonzero) natural number $N$, 
\begin{align*}
\mathfrak{h}_{N}:=& \left\{ (\boldsymbol{\xi },\boldsymbol{\xi }_{\ast
},I,I_{\ast })\in \left( \mathbb{R}^{3}\times \mathbb{R}_{+}\right)
^{2}:\left\vert \boldsymbol{\xi }-\boldsymbol{\xi }_{\ast }\right\vert \geq 
\frac{1}{N}\text{; }\left\vert \boldsymbol{\xi }\right\vert \leq N\right\} 
\text{, and} \\
b^{(N)}=& b^{(N)}(\boldsymbol{\xi },\boldsymbol{\xi }_{\ast },I,I_{\ast
}):=b(\boldsymbol{\xi },\boldsymbol{\xi }_{\ast },I,I_{\ast })\mathbf{1}_{%
\mathfrak{h}_{N}}\text{.}
\end{align*}%
Then we have the following lemma, cf Glassey \cite[Lemma 3.5.1]{Glassey} and
Drange \cite{Dr-75}.

\begin{lemma}
\label{LGD} Assume that $Tf\left( \boldsymbol{\xi },I\right) =\int_{\mathbb{R%
}^{3}\times \mathbb{R}_{+}}b(\boldsymbol{\xi },\boldsymbol{\xi }_{\ast
},I,I_{\ast })f\left( \boldsymbol{\xi }_{\ast },I_{\ast }\right) \,d%
\boldsymbol{\xi }_{\ast }dI_{\ast }$, with $b(\boldsymbol{\xi },\boldsymbol{%
\xi }_{\ast },I,I_{\ast })\geq 0$. Then $T$ is compact on $L^{2}\left( d%
\boldsymbol{\xi \,}dI\right) $ if

(i) $\int_{\mathbb{R}^{3}\times \mathbb{R}_{+}}b(\boldsymbol{\xi },%
\boldsymbol{\xi }_{\ast },I,I_{\ast })\,d\boldsymbol{\xi }$ $dI$ is bounded
in $\left( \boldsymbol{\xi }_{\ast },I_{\ast }\right) $;

(ii) $b^{(N)}\in L^{2}\left( d\boldsymbol{\xi \,}d\boldsymbol{\xi }_{\ast }dI%
\boldsymbol{\,}dI_{\ast }\right) $ for any (nonzero) natural number $N$;

(iii) $\underset{\in \mathbb{R}^{3}\times \mathbb{R}_{+}}{\underset{\left( 
\boldsymbol{\xi },I\right) }{\sup }}\int_{\mathbb{R}^{3}\times \mathbb{R}%
_{+}}b(\boldsymbol{\xi },\boldsymbol{\xi }_{\ast },I,I_{\ast })-b^{(N)}(%
\boldsymbol{\xi },\boldsymbol{\xi }_{\ast },I,I_{\ast })\,d\boldsymbol{\xi }%
_{\ast }dI_{\ast }\rightarrow 0$ as $N\rightarrow \infty $.
\end{lemma}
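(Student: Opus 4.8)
The plan is to prove Lemma \ref{LGD} in two stages: first establish compactness of the truncated operator $T_N$ with kernel $b^{(N)}$ directly from (ii), and then show $T_N \to T$ in operator norm using (i) and (iii). Since the uniform limit of compact operators is compact, this yields compactness of $T$.

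First I would handle the truncated operator. Fix a natural number $N$. By hypothesis (ii), $b^{(N)} \in L^2\left(d\boldsymbol{\xi}\,d\boldsymbol{\xi}_\ast\,dI\,dI_\ast\right)$, so the operator $T_N$ defined by $T_N f(\boldsymbol{\xi},I) = \int_{\mathbb{R}^3\times\mathbb{R}_+} b^{(N)}(\boldsymbol{\xi},\boldsymbol{\xi}_\ast,I,I_\ast) f(\boldsymbol{\xi}_\ast,I_\ast)\,d\boldsymbol{\xi}_\ast\,dI_\ast$ is a Hilbert-Schmidt integral operator on $L^2\left(d\boldsymbol{\xi}\,dI\right)$, with Hilbert-Schmidt norm equal to the $L^2$-norm of $b^{(N)}$ over the product space; in particular $T_N$ is bounded and compact. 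This is the standard fact that a kernel square-integrable on the product space induces a Hilbert-Schmidt operator.

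Next I would estimate the operator norm of $T - T_N$. The kernel of $T - T_N$ is $k_N := b - b^{(N)} = b\,\mathbf{1}_{\mathfrak{h}_N^c} \geq 0$. For a nonnegative kernel, the Schur test gives the bound $\|T - T_N\|^2 \leq A_N B_N$, where $A_N = \sup_{(\boldsymbol{\xi},I)} \int k_N(\boldsymbol{\xi},\boldsymbol{\xi}_\ast,I,I_\ast)\,d\boldsymbol{\xi}_\ast\,dI_\ast$ and $B_N = \sup_{(\boldsymbol{\xi}_\ast,I_\ast)} \int k_N(\boldsymbol{\xi},\boldsymbol{\xi}_\ast,I,I_\ast)\,d\boldsymbol{\xi}\,dI$. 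Hypothesis (iii) states precisely that $A_N \to 0$ as $N \to \infty$. For $B_N$, I would note that $B_N \leq \sup_{(\boldsymbol{\xi}_\ast,I_\ast)} \int b(\boldsymbol{\xi},\boldsymbol{\xi}_\ast,I,I_\ast)\,d\boldsymbol{\xi}\,dI$, which is finite and bounded uniformly in $N$ by hypothesis (i). Hence $\|T - T_N\| \leq \left(A_N B_N\right)^{1/2} \to 0$.

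I expect the only genuinely delicate point to be making sure the Schur test is applied correctly with the asymmetric roles of the two pairs of variables — that is, confirming that $b$ (hence $k_N$) is nonnegative so that $\|T-T_N\|$ is controlled by the geometric mean of the two one-sided integral suprema, and that these are exactly the quantities appearing in (i) and (iii). Once $T_N \to T$ in norm with each $T_N$ compact, compactness of $T$ follows from the closedness of the ideal of compact operators under norm limits; this also shows $T$ is ``approximately Hilbert-Schmidt'' in the sense used later. I would remark in passing that the definition of $T$ itself as a bounded operator on $L^2$ is justified a posteriori, since $T = T_1 + (T - T_1)$ is a sum of bounded operators; alternatively boundedness follows directly from the Schur test applied to $b$ using (i) for one factor and, in the eventual applications, the analogue of (i) with the roles of $(\boldsymbol{\xi},I)$ and $(\boldsymbol{\xi}_\ast,I_\ast)$ exchanged — but for the statement as given, conditions (i)–(iii) suffice to run the argument above.
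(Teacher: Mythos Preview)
Your argument is correct and is the standard route: each $T_N$ is Hilbert--Schmidt by (ii), and the Schur test with (i) and (iii) gives $\|T-T_N\|\to 0$, so $T$ is a norm limit of compact operators and hence compact. Note that the paper does not supply its own proof of this lemma at all---it simply refers the reader to \cite[Lemma~3.5.1]{Glassey} and records the conclusion that $T$ is the uniform limit of Hilbert--Schmidt operators; your sketch is essentially a reconstruction of that referenced argument.
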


Then $T$ is the uniform limit of Hilbert-Schmidt integral operators \cite[%
Lemma 3.5.1]{Glassey}, and we say that the kernel $b(\boldsymbol{\xi },%
\boldsymbol{\xi }_{\ast },I,I_{\ast })$ is approximately Hilbert-Schmidt,
while $T$ is an approximately Hilbert-Schmidt integral operator. Note that
by this definition a Hilbert-Schmidt integral operator, will also be an
approximately Hilbert-Schmidt integral operator. The reader is referred to 
\cite[Lemma 3.5.1]{Glassey} for a proof of Lemma \ref{LGD}.

Note that throughout the proof, $C$ will denote a generic positive constant.

\begin{proof}
Rewrite expression $\left( \ref{dec1}\right) $ as 
\begin{eqnarray*}
Kh &=&M^{-1/2}\int_{\left( \mathbb{R}^{3}\times \mathbb{R}_{+}\right) ^{3}}w(%
\boldsymbol{\xi },\boldsymbol{\xi }_{\ast },I,I_{\ast }\left\vert 
\boldsymbol{\xi }^{\prime },\boldsymbol{\xi }_{\ast }^{\prime },I^{\prime
},I_{\ast }^{\prime }\right. ) \\
&&\times \left( \frac{h_{\ast }}{M_{\ast }^{1/2}}-\frac{h^{\prime }}{\left(
M^{\prime }\right) ^{1/2}}-\frac{h_{\ast }^{\prime }}{\left( M_{\ast
}^{\prime }\right) ^{1/2}}\right) \,d\boldsymbol{\xi }_{\ast }d\boldsymbol{%
\xi }^{\prime }d\boldsymbol{\xi }_{\ast }^{\prime }dI_{\ast }dI^{\prime
}dI_{\ast }^{\prime }\text{,}
\end{eqnarray*}%
with 
\begin{equation*}
w(\boldsymbol{\xi },\boldsymbol{\xi }_{\ast },I,I_{\ast }\left\vert 
\boldsymbol{\xi }^{\prime },\boldsymbol{\xi }_{\ast }^{\prime },I^{\prime
},I_{\ast }^{\prime }\right. )=\frac{\left( MM_{\ast }M^{\prime }M_{\ast
}^{\prime }\right) ^{1/2}}{\left( II_{\ast }I^{\prime }I_{\ast }^{\prime
}\right) ^{\delta /4-1/2}}W(\boldsymbol{\xi },\boldsymbol{\xi }_{\ast
},I,I_{\ast }\left\vert \boldsymbol{\xi }^{\prime },\boldsymbol{\xi }_{\ast
}^{\prime },I^{\prime },I_{\ast }^{\prime }\right. )\text{.}
\end{equation*}%
Due to relations $\left( \ref{rel1}\right) $, the relations%
\begin{eqnarray}
w(\boldsymbol{\xi },\boldsymbol{\xi }_{\ast },I,I_{\ast }\left\vert 
\boldsymbol{\xi }^{\prime },\boldsymbol{\xi }_{\ast }^{\prime },I^{\prime
},I_{\ast }^{\prime }\right. ) &=&w(\boldsymbol{\xi }_{\ast },\boldsymbol{%
\xi },I_{\ast },I\left\vert \boldsymbol{\xi }_{\ast }^{\prime },\boldsymbol{%
\xi }^{\prime },I_{\ast }^{\prime },I^{\prime }\right. )  \notag \\
w(\boldsymbol{\xi },\boldsymbol{\xi }_{\ast },I,I_{\ast }\left\vert 
\boldsymbol{\xi }^{\prime },\boldsymbol{\xi }_{\ast }^{\prime },I^{\prime
},I_{\ast }^{\prime }\right. ) &=&w(\boldsymbol{\xi }^{\prime },\boldsymbol{%
\xi }_{\ast }^{\prime },I^{\prime },I_{\ast }^{\prime }\left\vert 
\boldsymbol{\xi },\boldsymbol{\xi }_{\ast },I,I_{\ast }\right. )  \notag \\
w(\boldsymbol{\xi },\boldsymbol{\xi }_{\ast },I,I_{\ast }\left\vert 
\boldsymbol{\xi }^{\prime },\boldsymbol{\xi }_{\ast }^{\prime },I^{\prime
},I_{\ast }^{\prime }\right. ) &=&w(\boldsymbol{\xi },\boldsymbol{\xi }%
_{\ast },I,I_{\ast }\left\vert \boldsymbol{\xi }_{\ast }^{\prime },%
\boldsymbol{\xi }^{\prime },I_{\ast }^{\prime },I^{\prime }\right. )
\label{rel2}
\end{eqnarray}%
are satisfied.

By first renaming $\left\{ \boldsymbol{\xi }_{\ast },I_{\ast }\right\}
\leftrightarrows \left\{ \boldsymbol{\xi }^{\prime },I^{\prime }\right\} $,
then $\left\{ \boldsymbol{\xi }_{\ast },I_{\ast }\right\} \leftrightarrows
\left\{ \boldsymbol{\xi }_{\ast }^{\prime },I_{\ast }^{\prime }\right\} $,
followed by applying the last relation in $\left( \ref{rel2}\right) $, 
\begin{eqnarray*}
&&\int_{\left( \mathbb{R}^{3}\times \mathbb{R}_{+}\right) ^{3}}w(\boldsymbol{%
\xi },\boldsymbol{\xi }_{\ast },I,I_{\ast }\left\vert \boldsymbol{\xi }%
^{\prime },\boldsymbol{\xi }_{\ast }^{\prime },I^{\prime },I_{\ast }^{\prime
}\right. )\,\frac{h_{\ast }^{\prime }}{\left( M_{\ast }^{\prime }\right)
^{1/2}}\,d\boldsymbol{\xi }_{\ast }d\boldsymbol{\xi }^{\prime }d\boldsymbol{%
\xi }_{\ast }^{\prime }dI_{\ast }dI^{\prime }dI_{\ast }^{\prime } \\
&=&\int_{\left( \mathbb{R}^{3}\times \mathbb{R}_{+}\right) ^{3}}w(%
\boldsymbol{\xi },\boldsymbol{\xi }^{\prime },I,I^{\prime }\left\vert 
\boldsymbol{\xi }_{\ast },\boldsymbol{\xi }_{\ast }^{\prime },I_{\ast
},I_{\ast }^{\prime }\right. )\,\frac{h_{\ast }^{\prime }}{\left( M_{\ast
}^{\prime }\right) ^{1/2}}\,d\boldsymbol{\xi }_{\ast }d\boldsymbol{\xi }%
^{\prime }d\boldsymbol{\xi }_{\ast }^{\prime }dI_{\ast }dI^{\prime }dI_{\ast
}^{\prime } \\
&=&\int_{\left( \mathbb{R}^{3}\times \mathbb{R}_{+}\right) ^{3}}w(%
\boldsymbol{\xi },\boldsymbol{\xi }^{\prime },I,I^{\prime }\left\vert 
\boldsymbol{\xi }_{\ast }^{\prime },\boldsymbol{\xi }_{\ast },I_{\ast
}^{\prime },I_{\ast }\right. )\,\frac{h_{\ast }}{M_{\ast }^{1/2}}\,d%
\boldsymbol{\xi }_{\ast }d\boldsymbol{\xi }^{\prime }d\boldsymbol{\xi }%
_{\ast }^{\prime }dI_{\ast }dI^{\prime }dI_{\ast }^{\prime } \\
&=&\int_{\left( \mathbb{R}^{3}\times \mathbb{R}_{+}\right) ^{3}}w(%
\boldsymbol{\xi },\boldsymbol{\xi }^{\prime },I,I^{\prime }\left\vert 
\boldsymbol{\xi }_{\ast },\boldsymbol{\xi }_{\ast }^{\prime },I_{\ast
},I_{\ast }^{\prime }\right. )\,\frac{h_{\ast }}{M_{\ast }^{1/2}}\,d%
\boldsymbol{\xi }_{\ast }d\boldsymbol{\xi }^{\prime }d\boldsymbol{\xi }%
_{\ast }^{\prime }dI_{\ast }dI^{\prime }dI_{\ast }^{\prime }\text{.}
\end{eqnarray*}%
Moreover, by renaming $\left\{ \boldsymbol{\xi }_{\ast },I_{\ast }\right\}
\leftrightarrows \left\{ \boldsymbol{\xi }^{\prime },I^{\prime }\right\} $, 
\begin{eqnarray*}
&&\int_{\left( \mathbb{R}^{3}\times \mathbb{R}_{+}\right) ^{3}}w(\boldsymbol{%
\xi },\boldsymbol{\xi }_{\ast },I,I_{\ast }\left\vert \boldsymbol{\xi }%
^{\prime },\boldsymbol{\xi }_{\ast }^{\prime },I^{\prime },I_{\ast }^{\prime
}\right. )\,\frac{h^{\prime }}{\left( M^{\prime }\right) ^{1/2}}\,d%
\boldsymbol{\xi }_{\ast }d\boldsymbol{\xi }^{\prime }d\boldsymbol{\xi }%
_{\ast }^{\prime }dI_{\ast }dI^{\prime }dI_{\ast }^{\prime } \\
&=&\int_{\left( \mathbb{R}^{3}\times \mathbb{R}_{+}\right) ^{3}}w(%
\boldsymbol{\xi },\boldsymbol{\xi }^{\prime },I,I^{\prime }\left\vert 
\boldsymbol{\xi }_{\ast },\boldsymbol{\xi }_{\ast }^{\prime },I_{\ast
},I_{\ast }^{\prime }\right. )\,\frac{h_{\ast }}{M_{\ast }^{1/2}}\,d%
\boldsymbol{\xi }_{\ast }d\boldsymbol{\xi }^{\prime }d\boldsymbol{\xi }%
_{\ast }^{\prime }dI_{\ast }dI^{\prime }dI_{\ast }^{\prime }\text{.}
\end{eqnarray*}%
It follows that%
\begin{eqnarray}
K\left( h\right) &=&\int_{\mathbb{R}^{3}\times \mathbb{R}_{+}}k(\boldsymbol{%
\xi },\boldsymbol{\xi }_{\ast },I,I_{\ast })\,h_{\ast }\,d\boldsymbol{\xi }%
_{\ast }dI_{\ast }\text{, where }  \notag \\
k(\boldsymbol{\xi },\boldsymbol{\xi }_{\ast },I,I_{\ast }) &=&k_{2}(%
\boldsymbol{\xi },\boldsymbol{\xi }_{\ast },I,I_{\ast })-k_{1}(\boldsymbol{%
\xi },\boldsymbol{\xi }_{\ast },I,I_{\ast })\text{,}  \notag
\end{eqnarray}%
with 
\begin{eqnarray}
&&k_{1}(\boldsymbol{\xi },\boldsymbol{\xi }_{\ast },I,I_{\ast })  \notag \\
&=&\left( MM_{\ast }\right) ^{-1/2}\int_{\left( \mathbb{R}^{3}\times \mathbb{%
R}_{+}\right) ^{2}}w(\boldsymbol{\xi },\boldsymbol{\xi }_{\ast },I,I_{\ast
}\left\vert \boldsymbol{\xi }^{\prime },\boldsymbol{\xi }_{\ast }^{\prime
},I^{\prime },I_{\ast }^{\prime }\right. )\,d\boldsymbol{\xi }^{\prime }d%
\boldsymbol{\xi }_{\ast }^{\prime }dI^{\prime }dI_{\ast }^{\prime }\text{ and%
}  \notag \\
&&k_{2}(\boldsymbol{\xi },\boldsymbol{\xi }_{\ast },I,I_{\ast })  \notag \\
&=&2\left( MM_{\ast }\right) ^{-1/2}\int_{\left( \mathbb{R}^{3}\times 
\mathbb{R}_{+}\right) ^{2}}w(\boldsymbol{\xi },\boldsymbol{\xi }^{\prime
},I,I^{\prime }\left\vert \boldsymbol{\xi }_{\ast },\boldsymbol{\xi }_{\ast
}^{\prime },I_{\ast },I_{\ast }^{\prime }\right. )\,d\boldsymbol{\xi }%
^{\prime }d\boldsymbol{\xi }_{\ast }^{\prime }dI^{\prime }dI_{\ast }^{\prime
}\text{.}  \label{k1}
\end{eqnarray}

Note that%
\begin{equation*}
k(\boldsymbol{\xi },\boldsymbol{\xi }_{\ast },I,I_{\ast })=k_{2}(\boldsymbol{%
\xi }_{\ast },\boldsymbol{\xi },I_{\ast },I)-k_{1}(\boldsymbol{\xi }_{\ast },%
\boldsymbol{\xi },I_{\ast },I)=k(\boldsymbol{\xi }_{\ast },\boldsymbol{\xi }%
,I_{\ast },I),
\end{equation*}%
since, by applying the first and the last relation in $\left( \ref{rel2}%
\right) $, 
\begin{eqnarray}
&&k_{1}(\boldsymbol{\xi },\boldsymbol{\xi }_{\ast },I,I_{\ast })  \notag \\
&=&\left( MM_{\ast }\right) ^{-1/2}\int_{\left( \mathbb{R}^{3}\times \mathbb{%
R}_{+}\right) ^{2}}w(\boldsymbol{\xi }_{\ast },\boldsymbol{\xi },I_{\ast
},I\left\vert \boldsymbol{\xi }_{\ast }^{\prime },\boldsymbol{\xi }^{\prime
},I_{\ast }^{\prime },I^{\prime }\right. )\,d\boldsymbol{\xi }^{\prime }d%
\boldsymbol{\xi }_{\ast }^{\prime }dI^{\prime }dI_{\ast }^{\prime }  \notag
\\
&=&\left( MM_{\ast }\right) ^{-1/2}\int_{\left( \mathbb{R}^{3}\times \mathbb{%
R}_{+}\right) ^{2}}w(\boldsymbol{\xi }_{\ast },\boldsymbol{\xi },I_{\ast
},I\left\vert \boldsymbol{\xi }^{\prime },\boldsymbol{\xi }_{\ast }^{\prime
},I^{\prime },I_{\ast }^{\prime }\right. )\,d\boldsymbol{\xi }^{\prime }d%
\boldsymbol{\xi }_{\ast }^{\prime }dI^{\prime }dI_{\ast }^{\prime }  \notag
\\
&=&k_{1}(\boldsymbol{\xi }_{\ast },\boldsymbol{\xi },I_{\ast },I)
\label{sa1}
\end{eqnarray}%
and, by applying the second relation in $\left( \ref{rel2}\right) $ and
renaming $\left\{ \boldsymbol{\xi }^{\prime },I^{\prime }\right\}
\leftrightarrows \left\{ \boldsymbol{\xi }_{\ast }^{\prime },I_{\ast
}^{\prime }\right\} $, 
\begin{eqnarray}
&&k_{2}(\boldsymbol{\xi },\boldsymbol{\xi }_{\ast },I,I_{\ast })  \notag \\
&=&2\left( MM_{\ast }\right) ^{-1/2}\int_{\left( \mathbb{R}^{3}\times 
\mathbb{R}_{+}\right) ^{2}}w(\boldsymbol{\xi }_{\ast },\boldsymbol{\xi }%
_{\ast }^{\prime },I_{\ast },I_{\ast }^{\prime }\left\vert \boldsymbol{\xi },%
\boldsymbol{\xi }^{\prime },I,I^{\prime }\right. )\,d\boldsymbol{\xi }%
^{\prime }d\boldsymbol{\xi }_{\ast }^{\prime }dI^{\prime }dI_{\ast }^{\prime
}  \notag \\
&=&2\left( MM_{\ast }\right) ^{-1/2}\int_{\left( \mathbb{R}^{3}\times 
\mathbb{R}_{+}\right) ^{2}}w(\boldsymbol{\xi }_{\ast },\boldsymbol{\xi }%
^{\prime },I_{\ast },I^{\prime }\left\vert \boldsymbol{\xi },\boldsymbol{\xi 
}_{\ast }^{\prime },I,I_{\ast }^{\prime }\right. )\,d\boldsymbol{\xi }%
^{\prime }d\boldsymbol{\xi }_{\ast }^{\prime }dI^{\prime }dI_{\ast }^{\prime
}  \notag \\
&=&k_{2}(\boldsymbol{\xi }_{\ast },\boldsymbol{\xi },I_{\ast },I)\text{.}
\label{sa2}
\end{eqnarray}

We now continue by proving the compactness for the two different types of
collision kernel separately.

\begin{figure}[h]
\centering
\includegraphics[width=0.6\textwidth]{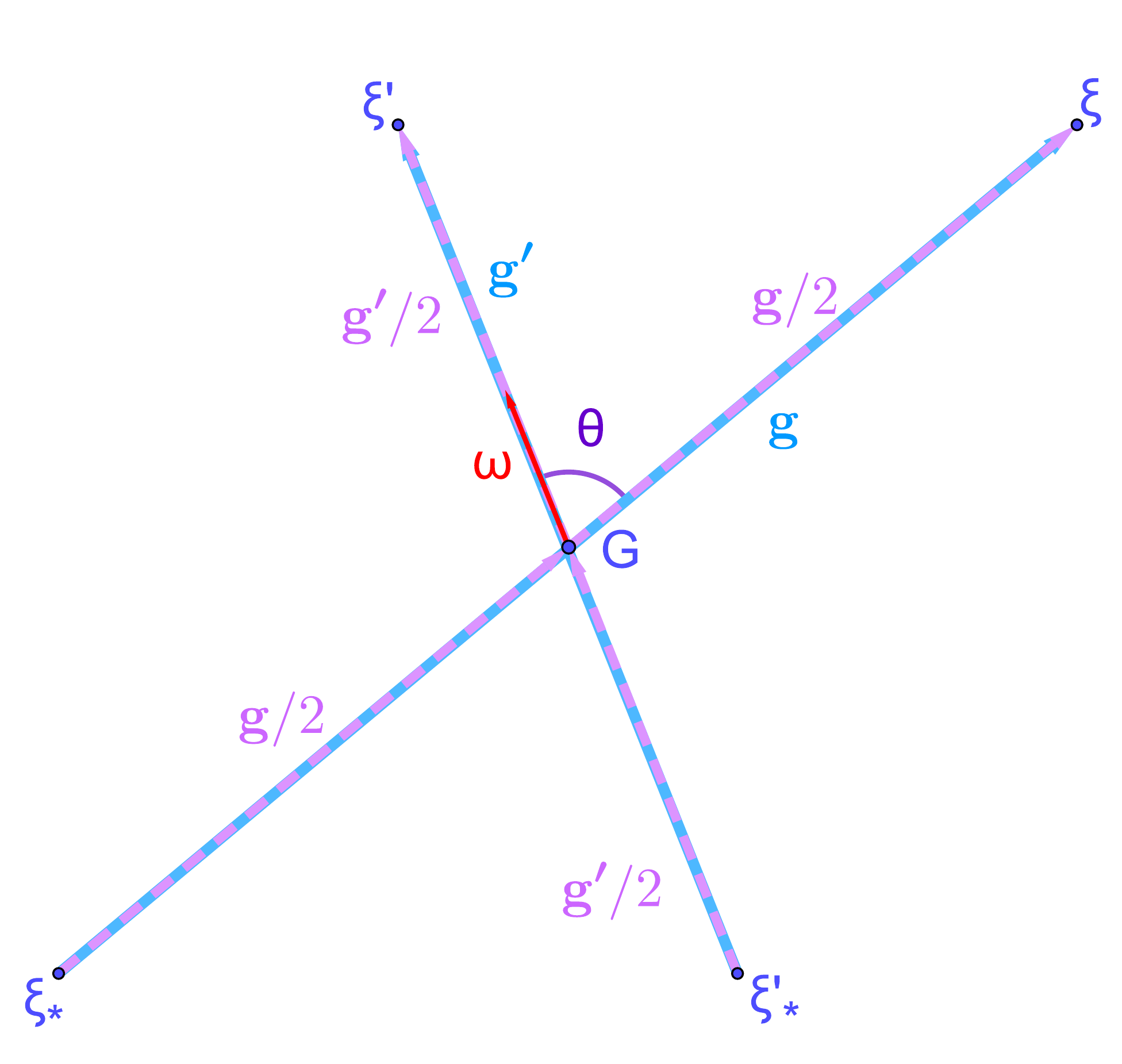}
\caption{Typical collision of $K_{1}$. Classical representation of an
inelastic collision.}
\label{fig1}
\end{figure}

\textbf{I. Compactness of }$K_{1}=\int_{\mathbb{R}^{3}\times \mathbb{R}%
_{+}}k_{1}(\boldsymbol{\xi },\boldsymbol{\xi }_{\ast },I,I_{\ast })\,h_{\ast
}\,d\boldsymbol{\xi }_{\ast }dI_{\ast }$.

A change of variables $\left\{ \boldsymbol{\xi }^{\prime },\boldsymbol{\xi }%
_{\ast }^{\prime }\right\} \rightarrow \left\{ \left\vert \mathbf{g}^{\prime
}\right\vert =\left\vert \boldsymbol{\xi }^{\prime }-\boldsymbol{\xi }_{\ast
}^{\prime }\right\vert ,\boldsymbol{\omega }=\dfrac{\mathbf{g}^{\prime }}{%
\left\vert \mathbf{g}^{\prime }\right\vert },\mathbf{G}^{\prime }=\dfrac{%
\boldsymbol{\xi }^{\prime }+\boldsymbol{\xi }_{\ast }^{\prime }}{2}\right\} $%
, cf. Figure $\ref{fig1}$, noting that $\left( \ref{df1}\right) $ and using
relation $\left( \ref{M1}\right) $, expression $\left( \ref{k1}\right) $ of $%
k_{1}$ may be transformed to 
\begin{eqnarray*}
&&k_{1}(\boldsymbol{\xi },\boldsymbol{\xi }_{\ast },I,I_{\ast }) \\
&=&\int_{\left( \mathbb{R}^{3}\times \mathbb{R}_{+}\right) ^{2}}\frac{\left(
M^{\prime }M_{\ast }^{\prime }\right) ^{1/2}}{\left( II_{\ast }I^{\prime
}I_{\ast }^{\prime }\right) ^{\delta /4-1/2}}W(\boldsymbol{\xi },\boldsymbol{%
\xi }_{\ast },I,I_{\ast }\left\vert \boldsymbol{\xi }^{\prime },\boldsymbol{%
\xi }_{\ast }^{\prime },I^{\prime },I_{\ast }^{\prime }\right. )\,d%
\boldsymbol{\xi }^{\prime }d\boldsymbol{\xi }_{\ast }^{\prime }dI^{\prime
}dI_{\ast }^{\prime } \\
&=&\int_{\mathbb{R}^{3}\times \mathbb{R}_{+}^{3}\times \mathbb{S}^{2}}\frac{%
\left( M^{\prime }M_{\ast }^{\prime }\right) ^{1/2}}{\left( II_{\ast
}I^{\prime }I_{\ast }^{\prime }\right) ^{\delta /4-1/2}}\left\vert \mathbf{g}%
^{\prime }\right\vert ^{2}W(\boldsymbol{\xi },\boldsymbol{\xi }_{\ast
},I,I_{\ast }\left\vert \boldsymbol{\xi }^{\prime },\boldsymbol{\xi }_{\ast
}^{\prime },I^{\prime },I_{\ast }^{\prime }\right. )\, \\
&&d\mathbf{G}^{\prime }d\left\vert \mathbf{g}^{\prime }\right\vert d%
\boldsymbol{\omega \,}dI^{\prime }dI_{\ast }^{\prime } \\
&=&\left( MM_{\ast }\right) ^{1/4}\left( II_{\ast }\right) ^{\delta
/8-1/4}\left\vert \mathbf{g}\right\vert \int_{\mathbb{S}^{2}\times \mathbb{R}%
_{+}^{2}}\frac{\left( M^{\prime }M_{\ast }^{\prime }\right) ^{1/4}}{\left(
I^{\prime }I_{\ast }^{\prime }\right) ^{\delta /8-1/4}}\sigma \mathbf{1}%
_{m\left\vert \mathbf{g}\right\vert ^{2}>4\Delta I}\,d\boldsymbol{\omega }%
\,dI^{\prime }dI_{\ast }^{\prime }\text{.}
\end{eqnarray*}%
Since, $E\geq \left( I^{\prime }I_{\ast }^{\prime }\right) ^{1/2}$ and $\Psi
\leq \left( EE^{\prime }\right) ^{1/2}=E$, it follows, by assumption $\left( %
\ref{est1}\right) $, that 
\begin{eqnarray*}
&&k_{1}^{2}(\boldsymbol{\xi },\boldsymbol{\xi }_{\ast },I,I_{\ast }) \\
&\leq &C\left( MM_{\ast }\right) ^{1/2}\frac{\left( II_{\ast }\right)
^{\delta /4-1/2}}{\left\vert \mathbf{g}\right\vert ^{2}} \\
&&\times \left( \int_{\mathbb{S}^{2}\times \mathbb{R}_{+}^{2}}\frac{\left(
I^{\prime }I_{\ast }^{\prime }\right) ^{\delta /2-1}}{E^{\delta -1/2}}%
e^{-I^{\prime }/4}e^{-I_{\ast }^{\prime }/4}\left( \Psi +\Psi ^{\gamma
/2}\right) ^{2}\mathbf{1}_{m\left\vert \mathbf{g}\right\vert ^{2}>4\Delta
I}\,d\boldsymbol{\omega }\,dI^{\prime }dI_{\ast }^{\prime }\right) ^{2} \\
&\leq &C\frac{\left( MM_{\ast }\right) ^{1/2}}{\left( II_{\ast }\right)
^{\delta /4-1/2}}\frac{\left( II_{\ast }\right) ^{\delta /2-1}}{\left\vert 
\mathbf{g}\right\vert ^{2}}\left( E+E^{\gamma /2}\right) ^{2}\left( \int_{%
\mathbb{S}^{2}}\,d\boldsymbol{\omega }\,\right) ^{2}\left( \int_{0}^{\infty }%
\frac{e^{-I/4}}{I^{3/4}}\,dI\right) ^{4}\text{.}
\end{eqnarray*}%
Now, noting that 
\begin{equation*}
m\frac{\left\vert \boldsymbol{\xi }\right\vert ^{2}}{2}+m\frac{\left\vert 
\boldsymbol{\xi }_{\ast }\right\vert ^{2}}{2}+I+I_{\ast }=m\left\vert 
\mathbf{G}\right\vert ^{2}+m\frac{\left\vert \mathbf{g}\right\vert ^{2}}{4}%
+I+I_{\ast }=m\left\vert \mathbf{G}\right\vert ^{2}+E\text{, \ }
\end{equation*}%
the bound 
\begin{equation}
k_{1}^{2}(\boldsymbol{\xi },\boldsymbol{\xi }_{\ast },I,I_{\ast })\leq
Ce^{-m\left\vert \mathbf{G}\right\vert ^{2}/2-E/2}\frac{\left( II_{\ast
}\right) ^{\delta /2-1}}{\left\vert \mathbf{g}\right\vert ^{2}}\left(
E+E^{\gamma /2}\right) ^{2}  \label{b1}
\end{equation}%
may be obtained. Then, by applying the bound $\left( \ref{b1}\right) $ and
first changing variables of integration $\left\{ \boldsymbol{\xi },%
\boldsymbol{\xi }_{\ast }\right\} \rightarrow \left\{ \mathbf{g},\mathbf{G}%
\right\} $, with unitary Jacobian, and then to spherical coordinates, 
\begin{eqnarray*}
&&\int_{\left( \mathbb{R}^{3}\times \mathbb{R}_{+}\right) ^{2}}k_{1}^{2}(%
\boldsymbol{\xi },\boldsymbol{\xi }_{\ast },I,I_{\ast })d\boldsymbol{\xi \,}d%
\boldsymbol{\xi }_{\ast }dI\boldsymbol{\,}dI_{\ast } \\
&\leq &C\int_{\left( \mathbb{R}^{3}\times \mathbb{R}_{+}\right)
^{2}}e^{-m\left\vert \mathbf{G}\right\vert ^{2}/2-E/2}\frac{\left( II_{\ast
}\right) ^{\delta /2-1}}{\left\vert \mathbf{g}\right\vert ^{2}}\left(
E+E^{\gamma /2}\right) ^{2}d\mathbf{g}\boldsymbol{\,}d\mathbf{G}\boldsymbol{%
\,}dI\boldsymbol{\,}dI_{\ast } \\
&=&C\int_{\mathbb{R}_{+}^{3}}e^{-m\left\vert \mathbf{g}\right\vert
^{2}/8}e^{-I/2}e^{-I_{\ast }/2}\left( II_{\ast }\right) ^{\delta /2-1}\left(
E+E^{\gamma /2}\right) ^{2}d\left\vert \mathbf{g}\right\vert \boldsymbol{\,}%
dI\boldsymbol{\,}dI_{\ast } \\
&&\times \int_{0}^{\infty }R^{2}e^{-R^{2}}dR\left( \int_{\mathbb{S}^{2}}\,d%
\boldsymbol{\omega }\,\right) ^{2} \\
&\leq &C\int_{\mathbb{R}_{+}^{3}}e^{-m\left\vert \mathbf{g}\right\vert
^{2}/8}e^{-I/2}e^{-I_{\ast }/2}\left( II_{\ast }\right) ^{\delta /2-1}\left(
1+E^{2}\right) \,d\left\vert \mathbf{g}\right\vert \boldsymbol{\,}dI%
\boldsymbol{\,}dI_{\ast } \\
&\leq &C\int_{0}^{\infty }e^{-m\left\vert \mathbf{g}\right\vert
^{2}/8}\left( 1+\frac{m^{2}}{16}\left\vert \mathbf{g}\right\vert ^{4}\right)
\,d\left\vert \mathbf{g}\right\vert \boldsymbol{\,}\left( \int_{0}^{\infty
}\left( 1+I\right) ^{2}e^{-I/2}I^{\delta /2-1}\,dI\right) ^{2} \\
&=&C
\end{eqnarray*}%
Hence,%
\begin{equation*}
K_{1}=\int_{\mathbb{R}^{3}\times \mathbb{R}_{+}}k_{1}(\boldsymbol{\xi },%
\boldsymbol{\xi }_{\ast },I,I_{\ast })\,h_{\ast }\,d\boldsymbol{\xi }_{\ast
}dI_{\ast }
\end{equation*}%
is a Hilbert-Schmidt integral operator and as such compact on $L^{2}\left( d%
\boldsymbol{\xi \,}dI\right) $, see e.g. Theorem 7.83 in \cite{RenardyRogers}%
.

\begin{figure}[h]
\centering
\includegraphics[width=0.6\textwidth]{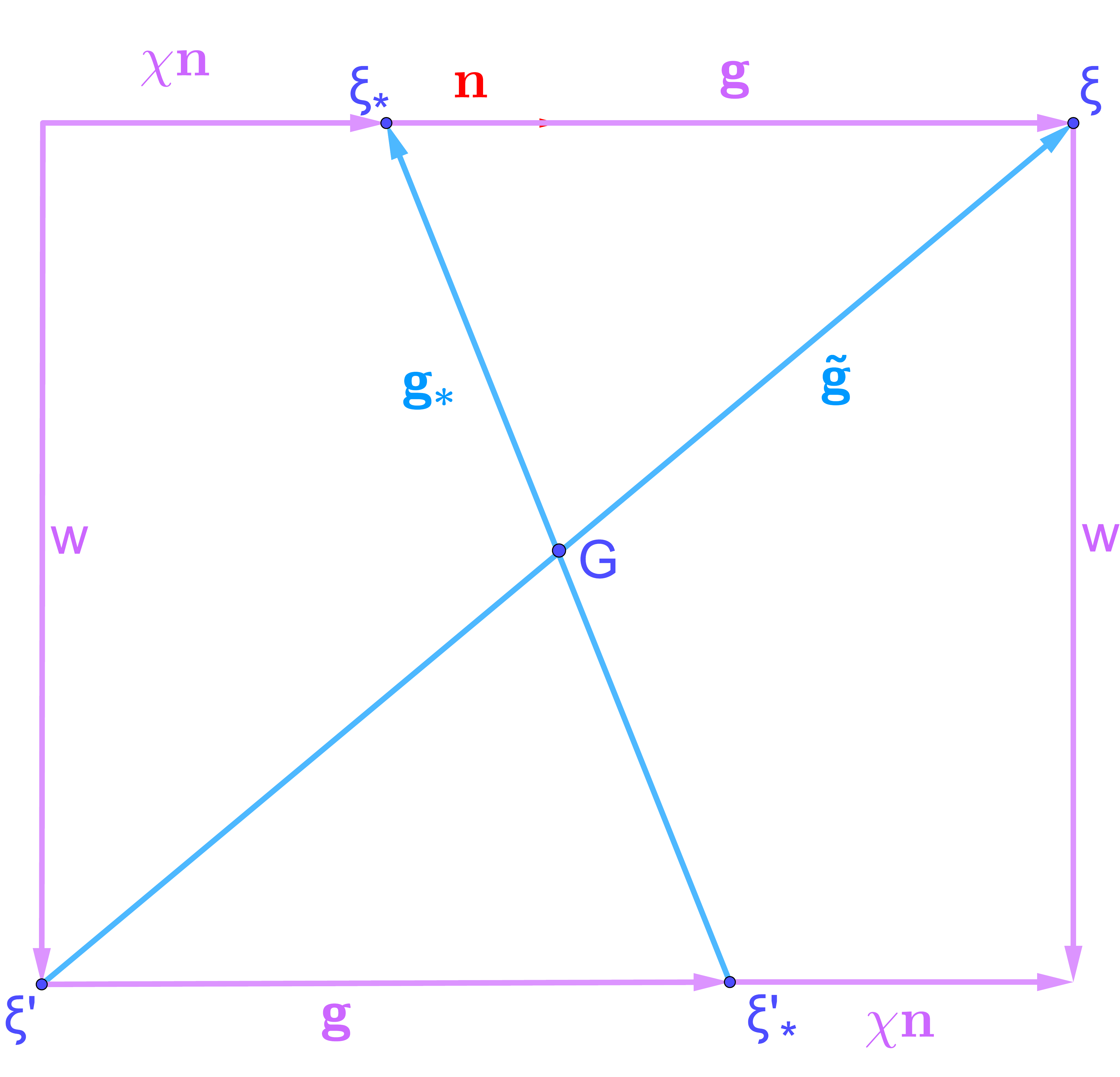}
\caption{Typical collision of $K_{2}$.}
\label{fig2}
\end{figure}

\textbf{II. Compactness of }$K_{2}=\int_{\mathbb{R}^{3}\times \mathbb{R}%
_{+}}k_{2}(\boldsymbol{\xi },\boldsymbol{\xi }_{\ast },I,I_{\ast })\,h_{\ast
}\,d\boldsymbol{\xi }_{\ast }dI_{\ast }$.

Denote, cf. Figure $\ref{fig2}$,%
\begin{eqnarray*}
\chi &=&\left( \boldsymbol{\xi }_{\ast }-\boldsymbol{\xi }^{\prime }\right)
\cdot \frac{\mathbf{g}}{\left\vert \mathbf{g}\right\vert }\text{, with }%
\mathbf{g}=\boldsymbol{\xi }-\boldsymbol{\xi }\mathbf{_{\ast }}\text{;} \\
\mathbf{g}^{\prime } &=&\boldsymbol{\xi }^{\prime }-\boldsymbol{\xi }_{\ast
}^{\prime },\text{ }\widetilde{\mathbf{g}}=\boldsymbol{\xi }-\boldsymbol{\xi 
}^{\prime }\text{, and }\mathbf{g}_{\ast }=\boldsymbol{\xi }_{\ast }-%
\boldsymbol{\xi }_{\ast }^{\prime }\text{.}
\end{eqnarray*}

Note that, see Figure $\ref{fig2}$,%
\begin{eqnarray*}
&&W(\boldsymbol{\xi },\boldsymbol{\xi }^{\prime },I,I^{\prime }\left\vert 
\boldsymbol{\xi }_{\ast },\boldsymbol{\xi }_{\ast }^{\prime },I_{\ast
},I_{\ast }^{\prime }\right. ) \\
&=&4m\left( II^{\prime }\right) ^{\delta /2-1}\widetilde{\sigma }\frac{%
\left\vert \widetilde{\mathbf{g}}\right\vert }{\left\vert \mathbf{g}_{\ast
}\right\vert }\delta _{1}\left( \frac{m}{2}\left( \left\vert \boldsymbol{\xi 
}\right\vert ^{2}-\left\vert \boldsymbol{\xi }_{\ast }\right\vert
^{2}+\left\vert \boldsymbol{\xi }^{\prime }\right\vert ^{2}-\left\vert 
\boldsymbol{\xi }_{\ast }^{\prime }\right\vert ^{2}\right) -\Delta I_{\ast
}\right) \\
&&\times \delta _{3}\left( \boldsymbol{\xi }^{\prime }-\boldsymbol{\xi }%
_{\ast }^{\prime }+\boldsymbol{\xi }-\boldsymbol{\xi }_{\ast }\right) \\
&=&4m\left( II^{\prime }\right) ^{\delta /2-1}\widetilde{\sigma }\frac{%
\left\vert \widetilde{\mathbf{g}}\right\vert }{\left\vert \mathbf{g}_{\ast
}\right\vert }\delta _{1}\left( m\left\vert \mathbf{g}\right\vert \chi
-\Delta I_{\ast }\right) \delta _{3}\left( \mathbf{g}^{\prime }+\mathbf{g}%
\right) \\
&=&4\left( II^{\prime }\right) ^{\delta /2-1}\widetilde{\sigma }\frac{%
\left\vert \widetilde{\mathbf{g}}\right\vert }{\left\vert \mathbf{g}_{\ast
}\right\vert \left\vert \mathbf{g}\right\vert }\delta _{1}\left( \chi -\frac{%
\Delta I_{\ast }}{m\left\vert \mathbf{g}\right\vert }\right) \delta
_{3}\left( \mathbf{g}^{\prime }+\mathbf{g}\right) \text{, with} \\
&&\Delta I_{\ast }=I_{\ast }+I_{\ast }^{\prime }-I-I^{\prime }\text{, and }%
\widetilde{\sigma }=\sigma \left( \left\vert \widetilde{\mathbf{g}}%
\right\vert ,\frac{\widetilde{\mathbf{g}}\cdot \mathbf{g}_{\ast }}{%
\left\vert \widetilde{\mathbf{g}}\right\vert \left\vert \mathbf{g}_{\ast
}\right\vert },I,I^{\prime },I_{\ast },I_{\ast }^{\prime }\right) \text{.}
\end{eqnarray*}

By a change of variables $\left\{ \boldsymbol{\xi }^{\prime },\boldsymbol{%
\xi }_{\ast }^{\prime }\right\} \rightarrow \left\{ \mathbf{g}^{\prime }=%
\boldsymbol{\xi }^{\prime }-\boldsymbol{\xi }_{\ast }^{\prime },~\mathbf{h}=%
\boldsymbol{\xi }^{\prime }-\boldsymbol{\xi }_{\ast }\right\} $, where 
\begin{equation*}
d\boldsymbol{\xi }^{\prime }d\boldsymbol{\xi }_{\ast }^{\prime }=d\mathbf{g}%
^{\prime }d\mathbf{h}=d\mathbf{g}^{\prime }d\chi d\mathbf{w}\text{,\ with }%
\mathbf{w}=\boldsymbol{\xi }^{\prime }-\boldsymbol{\xi }_{\ast }+\chi 
\mathbf{n}\text{ and }\mathbf{n}=\frac{\mathbf{g}}{\left\vert \mathbf{g}%
\right\vert }\text{,}
\end{equation*}%
the expression $\left( \ref{k1}\right) $ of $k_{2}$ may be transformed in
the following way%
\begin{eqnarray*}
&&k_{2}(\boldsymbol{\xi },\boldsymbol{\xi }_{\ast },I,I_{\ast }) \\
&=&\int_{\left( \mathbb{R}^{3}\times \mathbb{R}_{+}\right) ^{2}}2\frac{%
\left( M^{\prime }M_{\ast }^{\prime }\right) ^{1/2}}{\left( II_{\ast
}I^{\prime }I_{\ast }^{\prime }\right) ^{\delta /4-1/2}}W(\boldsymbol{\xi },%
\boldsymbol{\xi }^{\prime },I,I^{\prime }\left\vert \boldsymbol{\xi }_{\ast
},\boldsymbol{\xi }_{\ast }^{\prime },I_{\ast },I_{\ast }^{\prime }\right.
)\,d\mathbf{g}^{\prime }d\mathbf{h\,}dI^{\prime }dI_{\ast }^{\prime } \\
&=&\!\!\left( \frac{I}{I_{\ast }}\right) ^{\delta
/4-1/2}\!\!\!\!\int_{\left( \mathbb{R}^{3}\right) ^{\perp _{\mathbf{n}%
}}\times \mathbb{R}_{+}^{2}}\!\!2\left( M^{\prime }M_{\ast }^{\prime
}\right) ^{1/2}\!\left( \frac{I^{\prime }}{I_{\ast }^{\prime }}\right)
^{\delta /4-1/2}\!\!\widetilde{\sigma }\frac{\left\vert \widetilde{\mathbf{g}%
}\right\vert \mathbf{1}_{\left\vert \widetilde{\mathbf{g}}\right\vert
^{2}>4\Delta I_{\ast }}}{\left\vert \mathbf{g}_{\ast }\right\vert \left\vert 
\mathbf{g}\right\vert }\,d\mathbf{w}dI^{\prime }dI_{\ast }^{\prime }\text{.}
\end{eqnarray*}%
Here, see Figure $\ref{fig2}$,%
\begin{equation*}
\left\{ 
\begin{array}{c}
\boldsymbol{\xi }^{\prime }=\boldsymbol{\xi }_{\ast }+\mathbf{w}-\chi 
\mathbf{n} \\ 
\boldsymbol{\xi }_{\ast }^{\prime }=\boldsymbol{\xi }+\mathbf{w}-\chi 
\mathbf{n}%
\end{array}%
\right. \text{, with }\chi =\frac{\Delta I_{\ast }}{m\left\vert \mathbf{g}%
\right\vert }\text{,}
\end{equation*}%
implying that%
\begin{eqnarray*}
\frac{\left\vert \boldsymbol{\xi }^{\prime }\right\vert ^{2}}{2}+\frac{%
\left\vert \boldsymbol{\xi }_{\ast }^{\prime }\right\vert ^{2}}{2}
&=&\left\vert \frac{\boldsymbol{\xi +\xi }_{\ast }}{2}-\chi \mathbf{n}+%
\mathbf{w}\right\vert ^{2}+\frac{\left\vert \boldsymbol{\xi }-\boldsymbol{%
\xi }_{\ast }\right\vert ^{2}}{4} \\
&=&\left\vert \frac{\left( \boldsymbol{\xi +\xi }_{\ast }\right) _{\perp _{%
\boldsymbol{n}}}}{2}+\mathbf{w}\right\vert ^{2}+\left( \frac{\left( 
\boldsymbol{\xi +\xi }_{\ast }\right) _{\mathbf{n}}}{2}-\chi \right) ^{2}+%
\frac{\left\vert \mathbf{g}\right\vert ^{2}}{4} \\
&=&\left\vert \frac{\left( \boldsymbol{\xi +\xi }_{\ast }\right) _{\perp _{%
\boldsymbol{n}}}}{2}+\mathbf{w}\right\vert ^{2}+\frac{\left( \left\vert 
\boldsymbol{\xi }_{\ast }\right\vert ^{2}-\left\vert \boldsymbol{\xi }%
\right\vert ^{2}+2\chi \left\vert \boldsymbol{\xi }-\boldsymbol{\xi }_{\ast
}\right\vert \right) ^{2}}{4\left\vert \boldsymbol{\xi }-\boldsymbol{\xi }%
_{\ast }\right\vert ^{2}}+\frac{\left\vert \mathbf{g}\right\vert ^{2}}{4}%
\text{,\ }
\end{eqnarray*}%
with%
\begin{eqnarray*}
\left( \boldsymbol{\xi +\xi }_{\ast }\right) _{\mathbf{n}} &=&\left( 
\boldsymbol{\xi +\xi }_{\ast }\right) \cdot \mathbf{n}=\frac{\left\vert 
\boldsymbol{\xi }\right\vert ^{2}-\left\vert \boldsymbol{\xi }_{\ast
}\right\vert ^{2}}{\left\vert \boldsymbol{\xi }-\boldsymbol{\xi }_{\ast
}\right\vert }\text{ and} \\
\text{\ }\left( \boldsymbol{\xi +\xi }_{\ast }\right) _{\perp _{\boldsymbol{n%
}}} &=&\boldsymbol{\xi +\xi }_{\ast }-\left( \boldsymbol{\xi +\xi }_{\ast
}\right) _{\mathbf{n}}\mathbf{n}\text{.}
\end{eqnarray*}%
Note that for any number $s$, such that $s\geq -1/2$,%
\begin{equation}
\frac{\left( II_{\ast }\right) ^{\delta /4-1/2}}{\widetilde{E}^{\delta
/2+s-1/2}}\leq \frac{I^{\delta /4-1/2-\kappa /2}}{I_{\ast }^{\delta
/4+s-\kappa /2}}\text{ for }0\leq \kappa \leq \delta +2s-1\text{,}
\label{b5}
\end{equation}%
where $\widetilde{E}=m\left\vert \widetilde{\mathbf{g}}\right\vert
^{2}/4+I+I^{\prime }=m\left\vert \mathbf{g}_{\ast }\right\vert
^{2}/4+I_{\ast }+I_{\ast }^{\prime }$.

By bound $\left( \ref{b5}\right) $ and assumption $\left( \ref{est1}\right) $%
, for any numbers $s$ and $\kappa $, such that $\ -1/2\leq s\leq \delta /2$
and $0\leq \kappa \leq \delta +2s-1$, 
\begin{eqnarray}
&&k_{2}^{2}(\boldsymbol{\xi },\boldsymbol{\xi }_{\ast },I,I_{\ast })  \notag
\\
&\leq &\frac{C}{\left\vert \mathbf{g}\right\vert ^{2}}\frac{I^{\delta
/2-1-\kappa }}{I_{\ast }^{\delta /2+2s-\kappa }}\left( \int_{\mathbb{R}%
_{+}^{2}}\exp \left( -m\frac{\left( \left\vert \boldsymbol{\xi }_{\ast
}\right\vert ^{2}-\left\vert \boldsymbol{\xi }\right\vert ^{2}+2\chi
\left\vert \mathbf{g}\right\vert \right) ^{2}}{8\left\vert \mathbf{g}%
\right\vert ^{2}}-\frac{m}{8}\left\vert \mathbf{g}\right\vert ^{2}\right)
\right.   \notag \\
&&\times \int_{\left( \mathbb{R}^{3}\right) ^{\perp _{\mathbf{n}}}}\left( 1+%
\frac{1}{\widetilde{\Psi }^{1-\gamma /2}}\right) \exp \left( -\frac{m}{2}%
\left\vert \frac{\left( \boldsymbol{\xi +\xi }_{\ast }\right) _{\perp _{%
\boldsymbol{n}}}}{2}+\mathbf{w}\right\vert ^{2}\right) d\mathbf{w}  \notag \\
&&\left. \,\times \mathbf{\,}e^{-\left( I^{\prime }+I_{\ast }^{\prime
}\right) /2}\frac{\left( I^{\prime }I_{\ast }^{\prime }\right) ^{\delta
/4-1/2}}{\widetilde{E}^{\delta /2-s}}dI^{\prime }dI_{\ast }^{\prime }\right)
^{2}  \notag \\
&\leq &\frac{C}{\left\vert \mathbf{g}\right\vert ^{2}}\frac{I^{\delta
/2-1-\kappa }}{I_{\ast }^{\delta /2+2s-\kappa }}\left( \int_{\mathbb{R}%
_{+}^{2}}\exp \left( -\frac{m}{8}\left( \left\vert \mathbf{g}\right\vert
+2\left\vert \boldsymbol{\xi }\right\vert \cos \varphi +2\chi \right) ^{2}-%
\frac{m}{8}\left\vert \mathbf{g}\right\vert ^{2}\right) \right.   \notag \\
&&\times \left. \frac{e^{-\left( I^{\prime }+I_{\ast }^{\prime }\right) /2}}{%
\left( I^{\prime }I_{\ast }^{\prime }\right) ^{1/2-s/2}}dI^{\prime }dI_{\ast
}^{\prime }\right) ^{2}\text{, where }\widetilde{\Psi }=\left\vert 
\widetilde{\mathbf{g}}\right\vert \left\vert \mathbf{g}_{\ast }\right\vert 
\text{ and }\cos \varphi =\mathbf{n}\cdot \frac{\boldsymbol{\xi }}{%
\left\vert \boldsymbol{\xi }\right\vert }\text{,}  \label{b2a}
\end{eqnarray}%
since 
\begin{eqnarray*}
&&\int_{\left( \mathbb{R}^{3}\right) ^{\perp _{\mathbf{n}}}}\left( 1+\frac{1%
}{\widetilde{\Psi }^{1-\gamma /2}}\right) \exp \left( -\frac{m}{2}\left\vert 
\frac{\left( \boldsymbol{\xi +\xi }_{\ast }\right) _{\perp _{\boldsymbol{n}}}%
}{2}+\mathbf{w}\right\vert ^{2}\right) d\mathbf{w} \\
&\leq &C\int_{\left( \mathbb{R}^{3}\right) ^{\perp _{\mathbf{n}}}}\left(
1+\left\vert \mathbf{w}\right\vert ^{\gamma -2}\right) \exp \left( -\frac{m}{%
2}\left\vert \frac{\left( \boldsymbol{\xi +\xi }_{\ast }\right) _{\perp _{%
\boldsymbol{n}}}}{2}+\mathbf{w}\right\vert ^{2}\right) \,d\mathbf{w\,} \\
&\leq &C\left( \int_{\left\vert \mathbf{w}\right\vert \leq 1}1+\left\vert 
\mathbf{w}\right\vert ^{\gamma -2}d\mathbf{w\,}+2\int_{\left\vert \mathbf{w}%
\right\vert \geq 1}\exp \left( -\frac{m}{2}\left\vert \frac{\left( 
\boldsymbol{\xi +\xi }_{\ast }\right) _{\perp _{\boldsymbol{n}}}}{2}+\mathbf{%
w}\right\vert ^{2}\right) d\mathbf{w}\right)  \\
&\leq &C\left( \int_{\left\vert \mathbf{w}\right\vert \leq 1}1+\left\vert 
\mathbf{w}\right\vert ^{\gamma -2}\,d\mathbf{w}\,+\int_{\left( \mathbb{R}%
^{3}\right) ^{\perp _{\mathbf{n}}}}e^{-\left\vert \widetilde{\mathbf{w}}%
\right\vert ^{2}}\,d\widetilde{\mathbf{w}}\right)  \\
&=&C\left( \int_{0}^{1}r+r^{\gamma -1}\,dr+\int_{0}^{\infty
}re^{-r^{2}}\,dr\right) =C\text{.}
\end{eqnarray*}%
For any numbers $s$ and $\kappa $, such that $\ -1/2\leq s\leq \delta /2$
and $0\leq \kappa \leq \delta +2s-1$, by the bound $\left( \ref{b2a}\right) $
on $k_{2}^{2}$ and the Cauchy-Schwarz inequality, 
\begin{eqnarray}
&&k_{2}^{2}(\boldsymbol{\xi },\boldsymbol{\xi }_{\ast },I,I_{\ast })  \notag
\\
&\leq &\frac{C}{\left\vert \mathbf{g}\right\vert ^{2}}\frac{I^{\delta
/2-1-\kappa }}{I_{\ast }^{\delta /2+2s-\kappa }}\int_{\mathbb{R}_{+}^{2}}%
\mathbf{\,}\frac{e^{-\left( I^{\prime }+I_{\ast }^{\prime }\right) /2}}{%
\left( I^{\prime }I_{\ast }^{\prime }\right) ^{1/2-s/2}}dI^{\prime }dI_{\ast
}^{\prime }  \notag \\
&&\times \int_{\mathbb{R}_{+}^{2}}\exp \left( -\frac{m}{4}\left( \left\vert 
\mathbf{g}\right\vert +2\left\vert \boldsymbol{\xi }\right\vert \cos \varphi
+2\chi \right) ^{2}-\frac{m}{4}\left\vert \mathbf{g}\right\vert ^{2}\right) 
\frac{e^{-\left( I^{\prime }+I_{\ast }^{\prime }\right) /2}}{\left(
I^{\prime }I_{\ast }^{\prime }\right) ^{1/2-s/2}}dI^{\prime }dI_{\ast
}^{\prime }  \notag \\
&=&\frac{C}{\left\vert \mathbf{g}\right\vert ^{2}}\frac{I^{\delta
/2-1-\kappa }}{I_{\ast }^{\delta /2+2s-\kappa }}\int_{\mathbb{R}%
_{+}^{2}}\exp \left( -\frac{m}{4}\left( \left\vert \mathbf{g}\right\vert
+2\left\vert \boldsymbol{\xi }\right\vert \cos \varphi +2\chi \right) ^{2}-%
\frac{m}{4}\left\vert \mathbf{g}\right\vert ^{2}\right)   \notag \\
&&\times \frac{e^{-\left( I^{\prime }+I_{\ast }^{\prime }\right) /2}}{\left(
I^{\prime }I_{\ast }^{\prime }\right) ^{1/2-s/2}}dI^{\prime }dI_{\ast
}^{\prime }\text{, with }\cos \varphi =\mathbf{n}\cdot \dfrac{\boldsymbol{%
\xi }}{\left\vert \boldsymbol{\xi }\right\vert },  \label{b2}
\end{eqnarray}%
since, 
\begin{equation}
\int_{\mathbb{R}_{+}^{2}}\mathbf{\,}\frac{e^{-\left( I^{\prime }+I_{\ast
}^{\prime }\right) /2}}{\left( I^{\prime }I_{\ast }^{\prime }\right)
^{1/2-s/2}}dI^{\prime }dI_{\ast }^{\prime }=\left( \int_{0}^{\infty }\mathbf{%
\,}\frac{e^{-I/2}}{I^{1/2-s/2}}dI\right) ^{2}=C\text{,}  \label{i1}
\end{equation}%
implying also%
\begin{equation}
k_{2}^{2}(\boldsymbol{\xi },\boldsymbol{\xi }_{\ast },I,I_{\ast })\leq \frac{%
C}{\left\vert \mathbf{g}\right\vert ^{2}}\frac{I^{\delta /2-1-\kappa }}{%
I_{\ast }^{\delta /2+2s-\kappa }}\exp \left( -\frac{m}{4}\left\vert \mathbf{g%
}\right\vert ^{2}\right) \text{.}  \label{b2b}
\end{equation}

Note that by letting $\kappa =(\delta -1)/2+s$ 
\begin{equation}
\frac{I^{\delta /2-1-\kappa }}{I_{\ast }^{\delta /2+2s-\kappa }}=\left(
II_{\ast }\right) ^{-1/2-s}=\left\{ 
\begin{array}{l}
\left( II_{\ast }\right) ^{-5/4}\text{ for }s=3/4 \\ 
1\text{ for }s=-1/2%
\end{array}%
\right. \text{,}  \label{b3}
\end{equation}%
while by letting $s=1/8$ 
\begin{equation}
\frac{I^{\delta /2-1-\kappa }}{I_{\ast }^{\delta /2+2s-\kappa }}=\frac{%
I^{\delta /2-1-\kappa }}{I_{\ast }^{\delta /2+1/4-\kappa }}=\left\{ 
\begin{array}{l}
I_{\ast }^{-5/4}\text{ for }\kappa =\delta /2-1 \\ 
I^{-5/4}\text{ for }\kappa =\delta /2+1/4%
\end{array}%
\right. \text{.}  \label{b4}
\end{equation}%
Moreover, by letting $s=3/4$%
\begin{equation}
\frac{I^{\delta /2-1-\kappa }}{I_{\ast }^{\delta /2+2s-\kappa }}=\frac{%
I^{\delta /2-1-\kappa }}{I_{\ast }^{\delta /2+3/2-\kappa }}=\left\{ 
\begin{array}{l}
I_{\ast }^{-5/2}\text{ for }\kappa =\delta /2-1 \\ 
I^{-5/2}\text{ for }\kappa =\delta /2+3/2%
\end{array}%
\right. \text{.}  \label{b4b}
\end{equation}

Hence, by the bound $\left( \ref{b2b}\right) $ on $k_{2}^{2}$, together with
expressions $\left( \ref{b3}\right) $ and $\left( \ref{b4}\right) $, 
\begin{eqnarray}
&&k_{2}^{2}(\boldsymbol{\xi },\boldsymbol{\xi }_{\ast },I,I_{\ast })  \notag
\\
&\leq &\frac{C}{\left\vert \mathbf{g}\right\vert ^{2}}\exp \left( -\frac{m}{4%
}\left\vert \mathbf{g}\right\vert ^{2}\right) \left( \mathbf{1}_{I\leq
1}+I^{-5/4}\mathbf{1}_{I\geq 1}\right) \left( \mathbf{1}_{I_{\ast }\leq
1}+I_{\ast }^{-5/4}\mathbf{1}_{I_{\ast }\geq 1}\right) \text{.}  \label{b3a}
\end{eqnarray}

To show that $k_{2}(\boldsymbol{\xi },\boldsymbol{\xi }_{\ast },I,I_{\ast })%
\mathbf{1}_{\mathfrak{h}_{N}}\in L^{2}\left( d\boldsymbol{\xi \,}\,d%
\boldsymbol{\xi }_{\ast }dI\,dI_{\ast }\right) $ for any (non-zero) natural
number $N$, separate the integration domain\ of the integral of $k_{2}^{2}(%
\boldsymbol{\xi },\boldsymbol{\xi }_{\ast },I,I_{\ast })$ over $\left( 
\mathbb{R}^{3}\times \mathbb{R}_{+}\right) ^{2}$ in two separate domains 
\begin{equation*}
\left\{ \left( \mathbb{R}^{3}\times \mathbb{R}_{+}\right) ^{2}\text{; }%
\left\vert \mathbf{g}\right\vert \geq \left\vert \boldsymbol{\xi }%
\right\vert \right\} \text{ and }\left\{ \left( \mathbb{R}^{3}\times \mathbb{%
R}_{+}\right) ^{2}\text{; }\left\vert \mathbf{g}\right\vert \leq \left\vert 
\boldsymbol{\xi }\right\vert \right\} .
\end{equation*}%
The integral of $k_{2}^{2}$ over the domain $\left\{ \left( \mathbb{R}%
^{3}\times \mathbb{R}_{+}\right) ^{2}\text{; }\left\vert \mathbf{g}%
\right\vert \geq \left\vert \boldsymbol{\xi }\right\vert \right\} $ will be
bounded, since, by the bound $\left( \ref{b3a}\right) $,%
\begin{eqnarray*}
&&\int_{0}^{\infty }\int_{0}^{\infty }\int_{\left\vert \mathbf{g}\right\vert
\geq \left\vert \boldsymbol{\xi }\right\vert }k_{2}^{2}(\boldsymbol{\xi },%
\boldsymbol{\xi }_{\ast },I,I_{\ast })\,d\boldsymbol{\xi }d\boldsymbol{\xi }%
_{\ast }dI\mathbf{\,}dI_{\ast } \\
&\leq &C\int_{\left\vert \mathbf{g}\right\vert \geq \left\vert \boldsymbol{%
\xi }\right\vert }\frac{e^{-m\left\vert \mathbf{g}\right\vert ^{2}/4}}{%
\left\vert \mathbf{g}\right\vert ^{2}}\mathbf{\,}d\mathbf{g\,}d\boldsymbol{%
\xi \,}\left( \int_{0}^{1}dI+\int_{1}^{\infty }I^{-5/4}dI\right) ^{2} \\
&\leq &C\int_{\left\vert \mathbf{g}\right\vert \geq \left\vert \boldsymbol{%
\xi }\right\vert }\frac{e^{-m\left\vert \mathbf{g}\right\vert ^{2}/4}}{%
\left\vert \mathbf{g}\right\vert ^{2}}\mathbf{\,}d\mathbf{g\,}d\boldsymbol{%
\xi \,}=C\int_{0}^{\infty }\int_{r}^{\infty }e^{-mR^{2}/4}r^{2}dR\mathbf{\,}%
dr\boldsymbol{\,} \\
&\leq &C\int_{0}^{\infty }e^{-mR^{2}/8}dR\int_{0}^{\infty
}e^{-mr^{2}/8}r^{2}dr=C
\end{eqnarray*}%
Regarding the second domain, consider the truncated domains 
\begin{equation*}
\left\{ \left( \mathbb{R}^{3}\times \mathbb{R}_{+}\right) ^{2}\text{; }%
\left\vert \mathbf{g}\right\vert \leq \left\vert \boldsymbol{\xi }%
\right\vert \leq N\right\} 
\end{equation*}%
for (non-zero) natural numbers $N$. Then by the bound $\left( \ref{b2}%
\right) $ on $k_{2}^{2}$, together with expressions $\left( \ref{i1}\right) $%
,$\left( \ref{b3}\right) $, $\left( \ref{b4}\right) $, 
\begin{eqnarray*}
\int_{\mathbb{R}_{+}^{2}}\int_{\left\vert \mathbf{g}\right\vert \leq
\left\vert \boldsymbol{\xi }\right\vert \leq N}k_{2}^{2}(\boldsymbol{\xi },%
\boldsymbol{\xi }_{\ast },I,I_{\ast })\,d\boldsymbol{\xi }d\boldsymbol{\xi }%
_{\ast }dIdI_{\ast }&\leq& CN^{2}\left( \int_{0}^{1}dI+\int_{1}^{\infty
}I^{-5/4}dI\right) ^{2} \\
&=&CN^{2}\text{,}
\end{eqnarray*}%
since%
\begin{eqnarray*}
&&\int_{\left\vert \mathbf{g}\right\vert \leq \left\vert \boldsymbol{\xi }%
\right\vert \leq N}\frac{C}{\left\vert \mathbf{g}\right\vert ^{2}}\exp
\left( -\frac{m}{4}\left( \left\vert \mathbf{g}\right\vert +2\left\vert 
\boldsymbol{\xi }\right\vert \cos \varphi +2\chi \right) ^{2}-\frac{m}{4}%
\left\vert \mathbf{g}\right\vert ^{2}\right) d\boldsymbol{\xi }\mathbf{\,}d%
\mathbf{g} \\
&=&C\int_{0}^{N}\int_{0}^{r}\int_{0}^{\pi }r^{2}\exp \left( -\frac{m}{4}%
\left( R+2r\cos \varphi +2\chi _{R}\right) ^{2}-\frac{m}{4}R^{2}\right) \sin
\varphi \,d\varphi \mathbf{\,}dR\mathbf{\,}dr \\
&=&C\int_{0}^{N}\int_{0}^{r}\int_{R+2\chi _{R}-2r}^{R+2\chi
_{R}+2r}re^{-m\eta ^{2}/4}e^{-mR^{2}/4}d\eta \mathbf{\,}dR\mathbf{\,}dr \\
&\leq &C\int_{0}^{N}r\,dr\int_{0}^{\infty }e^{-mR^{2}/4}dR\int_{-\infty
}^{\infty }e^{-m\eta ^{2}/4}d\eta =CN^{2}\text{, with }\chi _{R}=\frac{%
\Delta I_{\ast }}{mR}\text{.}
\end{eqnarray*}

Furthermore, the integral of $k_{2}(\boldsymbol{\xi },\boldsymbol{\xi }%
_{\ast },I,I_{\ast })$ with respect to $\left( \boldsymbol{\xi },I\right) $
over $\mathbb{R}^{3}\times \mathbb{R}_{+}$ is bounded in $\left( \boldsymbol{%
\xi }_{\ast },I_{\ast }\right) $. Indeed, by the bound $\left( \ref{b2b}%
\right) $ on $k_{2}^{2}$, together with expressions on $k_{2}^{2}$, together
with expressions $\left( \ref{b3}\right) $ and $\left( \ref{b4b}\right) $,%
\begin{equation*}
k_{2}(\boldsymbol{\xi },\boldsymbol{\xi }_{\ast },I,I_{\ast })\leq \frac{C}{%
\left\vert \mathbf{g}\right\vert }\exp \left( -\frac{m}{8}\left\vert \mathbf{%
g}\right\vert ^{2}\right) \left( \mathbf{1}_{I_{\ast }\leq 1}+I_{\ast
}^{-5/4}\mathbf{1}_{I_{\ast }\geq 1}\right) \text{,}
\end{equation*}%
why the following bound on the integral of $k_{2}$ with respect to $\left( 
\boldsymbol{\xi }_{\ast },I_{\ast }\right) $ over the domain $\left\{ 
\mathbb{R}^{3}\!\!\times \mathbb{R}_{+}\text{;}\left\vert \mathbf{g}%
\right\vert \!\geq \!\left\vert \boldsymbol{\xi }\right\vert \right\} $ can
be obtained for $\left\vert \boldsymbol{\xi }\right\vert \neq 0$%
\begin{eqnarray*}
&&\int_{0}^{\infty }\int_{\left\vert \mathbf{g}\right\vert \geq \left\vert 
\boldsymbol{\xi }\right\vert }k_{2}(\boldsymbol{\xi },\boldsymbol{\xi }%
_{\ast },I,I_{\ast })\,d\boldsymbol{\xi }_{\ast }dI_{\ast } \\
&\leq &\frac{C}{\left\vert \boldsymbol{\xi }\right\vert }\int_{\left\vert 
\mathbf{g}\right\vert \geq \left\vert \boldsymbol{\xi }\right\vert
}e^{-m\left\vert \mathbf{g}\right\vert ^{2}/8}d\mathbf{g}\left(
\int_{0}^{1}dI_{\ast }+\int_{1}^{\infty }\frac{dI_{\ast }}{I_{\ast }^{5/4}}%
\right)  \\
&\leq &\frac{C}{\left\vert \boldsymbol{\xi }\right\vert }\int_{0}^{\infty
}e^{-mr^{2}/8}r^{2}dr\int_{\mathbb{S}^{2}}\,d\boldsymbol{\omega }=\frac{C}{%
\left\vert \boldsymbol{\xi }\right\vert }\text{.}
\end{eqnarray*}%
Moreover, over the domain $\left\{ \mathbb{R}^{3}\times \mathbb{R}_{+}\text{%
; }\left\vert \mathbf{g}\right\vert \leq \left\vert \boldsymbol{\xi }%
\right\vert \right\} $, by the bound $\left( \ref{b2a}\right) $ on $k_{2}^{2}
$, and expressions $\left( \ref{i1}\right) $, $\left( \ref{b3}\right) $, $%
\left( \ref{b4b}\right) $, 
\begin{equation*}
\int_{0}^{\infty }\int_{\left\vert \mathbf{g}\right\vert \leq \left\vert 
\boldsymbol{\xi }\right\vert }k_{2}(\boldsymbol{\xi },\boldsymbol{\xi }%
_{\ast },I,I_{\ast })\,d\boldsymbol{\xi }_{\ast }dI_{\ast }\leq \frac{C}{%
\left\vert \boldsymbol{\xi }\right\vert }\left( \int_{0}^{1}dI_{\ast
}+\int_{1}^{\infty }I_{\ast }^{-5/4}dI_{\ast }\right) =\frac{C}{\left\vert 
\boldsymbol{\xi }\right\vert }\text{,}
\end{equation*}%
since%
\begin{eqnarray*}
&&\int_{\left\vert \mathbf{g}\right\vert \leq \left\vert \boldsymbol{\xi }%
\right\vert }\frac{C}{\left\vert \mathbf{g}\right\vert }\exp \left( -\frac{m%
}{8}\left( \left\vert \mathbf{g}\right\vert +2\left\vert \boldsymbol{\xi }%
\right\vert \cos \varphi +2\chi \right) ^{2}-\frac{m}{8}\left\vert \mathbf{g}%
\right\vert ^{2}\right) \,d\mathbf{g} \\
&=&C\int_{0}^{\left\vert \boldsymbol{\xi }\right\vert }\int_{0}^{\pi }R\exp
\left( -\frac{m}{8}\left( R+2\left\vert \boldsymbol{\xi }\right\vert \cos
\varphi +2\chi _{R}\right) ^{2}-\frac{m}{8}R^{2}\right) \sin \varphi
\,d\varphi \mathbf{\,}dR \\
&=&\frac{C}{\left\vert \boldsymbol{\xi }\right\vert }\int_{0}^{\left\vert 
\boldsymbol{\xi }\right\vert }\int_{R+2\chi _{R}-2\left\vert \boldsymbol{\xi 
}\right\vert }^{R+2\chi _{R}+2\left\vert \boldsymbol{\xi }\right\vert
}Re^{-m\eta ^{2}/8}e^{-mR^{2}/8}d\eta dR \\
&\leq &\frac{C}{\left\vert \boldsymbol{\xi }\right\vert }\int_{0}^{\infty
}Re^{-mR^{2}/8}\,dR\int_{-\infty }^{\infty }e^{-m\eta ^{2}/8}d\eta =\frac{C}{%
\left\vert \boldsymbol{\xi }\right\vert }\text{, with }\chi _{R}=\frac{%
\Delta I_{\ast }}{mR}\text{.}
\end{eqnarray*}%
However, due to the symmetry $k_{2}(\boldsymbol{\xi },\boldsymbol{\xi }%
_{\ast },I,I_{\ast })=k_{2}(\boldsymbol{\xi }_{\ast },\boldsymbol{\xi }%
,I_{\ast },I)$ $\left( \ref{sa2}\right) $, also 
\begin{equation*}
\int_{0}^{\infty }\int_{\mathbb{R}^{3}}k_{2}(\boldsymbol{\xi },\boldsymbol{%
\xi }_{\ast },I,I_{\ast })\,d\boldsymbol{\xi \,}dI\leq \frac{C}{\left\vert 
\boldsymbol{\xi }_{\ast }\right\vert }\text{.}
\end{equation*}%
Therefore, if $\left\vert \boldsymbol{\xi }_{\ast }\right\vert \geq 1$, then%
\begin{equation*}
\int_{0}^{\infty }\int_{\mathbb{R}^{3}}k_{2}(\boldsymbol{\xi },\boldsymbol{%
\xi }_{\ast },I,I_{\ast })\,d\boldsymbol{\xi \,}dI\,\leq \frac{C}{\left\vert 
\boldsymbol{\xi }_{\ast }\right\vert }\leq C\text{.}
\end{equation*}%
Otherwise, if $\left\vert \boldsymbol{\xi }_{\ast }\right\vert \leq 1$,
then, by the bounds bound $\left( \ref{b2a}\right) $ on $k_{2}^{2}$, and
expressions $\left( \ref{i1}\right) $, $\left( \ref{b3}\right) $, $\left( %
\ref{b4b}\right) $,%
\begin{eqnarray*}
&&\int_{0}^{\infty }\int_{\mathbb{R}^{3}}k_{2}(\boldsymbol{\xi },\boldsymbol{%
\xi }_{\ast },I,I_{\ast })\,d\boldsymbol{\xi \,}dI=\int_{0}^{\infty }\int_{%
\mathbb{R}^{3}}k_{2}(\boldsymbol{\xi }_{\ast },\boldsymbol{\xi },I_{\ast
},I)\,d\boldsymbol{\xi \,}dI\,\, \\
&\leq &C\left( \int_{0}^{1}dI+\int_{1}^{\infty }I^{-5/4}dI\right) \leq C
\end{eqnarray*}%
since%
\begin{eqnarray*}
&&\int_{\mathbb{R}^{3}}\frac{C}{\left\vert \mathbf{g}\right\vert }\exp
\left( -\frac{m}{8}\left( \left\vert \mathbf{g}\right\vert +2\left\vert 
\boldsymbol{\xi }_{\ast }\right\vert \cos \varphi _{\ast }-2\chi \right)
^{2}-\frac{m}{8}\left\vert \mathbf{g}\right\vert ^{2}\right) \,d\mathbf{g} \\
&=&C\int_{0}^{\infty }\int_{0}^{\pi }R\exp \left( -\frac{m}{8}\left(
R+2\left\vert \boldsymbol{\xi }\right\vert \cos \varphi _{\ast }-2\chi
_{R}\right) ^{2}-\frac{m}{8}R^{2}\right) \sin \varphi _{\ast }\,d\varphi
_{\ast }\boldsymbol{\,}dR \\
&\leq &\frac{C}{\left\vert \boldsymbol{\xi }_{\ast }\right\vert }%
\int_{0}^{\infty }Re^{-mR^{2}/8}dR\int_{R-2\chi _{R}-2\left\vert \boldsymbol{%
\xi }_{\ast }\right\vert }^{R-2\chi _{R}+2\left\vert \boldsymbol{\xi }_{\ast
}\right\vert }\,dt=C\text{, with }\chi _{R}=\frac{\Delta I_{\ast }}{mR}\text{
and }\\
&&\cos \varphi _{\ast }=\mathbf{n}\cdot \frac{\boldsymbol{\xi }_{\ast }}{%
\left\vert \boldsymbol{\xi }_{\ast }\right\vert }\text{.}
\end{eqnarray*}

Furthermore,%
\begin{eqnarray*}
&&\sup_{\left( \boldsymbol{\xi },I\right) \in \mathbb{R}^{3}\times \mathbb{R}%
_{+}}\int_{\mathbb{R}^{3}\times \mathbb{R}_{+}}k_{2}(\boldsymbol{\xi },%
\boldsymbol{\xi }_{\ast },I,I_{\ast })-k_{2}(\boldsymbol{\xi },\boldsymbol{%
\xi }_{\ast },I,I_{\ast })\mathbf{1}_{\mathfrak{h}_{N}}\,d\boldsymbol{\xi }%
_{\ast }dI_{\ast } \\
&\leq &\sup_{\left( \boldsymbol{\xi },I\right) \in \mathbb{R}^{3}\times 
\mathbb{R}_{+}}\int_{0}^{\infty }\int_{\left\vert \mathbf{g}\right\vert \leq 
\frac{1}{N}}k_{2}(\boldsymbol{\xi },\boldsymbol{\xi }_{\ast },I,I_{\ast })\,d%
\boldsymbol{\xi }_{\ast }dI_{\ast } \\
&&+\sup_{\left\vert \boldsymbol{\xi }\right\vert \geq N}\int_{0}^{\infty
}\int_{\mathbb{R}^{3}}k_{2}(\boldsymbol{\xi },\boldsymbol{\xi }_{\ast
},I,I_{\ast })\,d\boldsymbol{\xi }_{\ast }dI_{\ast } \\
&\leq &\int_{\left\vert \mathbf{g}\right\vert \leq \frac{1}{N}}\frac{C}{%
\left\vert \mathbf{g}\right\vert }\,d\mathbf{g}\left( \int_{0}^{1}dI_{\ast
}+\int_{1}^{\infty }\frac{dI_{\ast }}{I_{\ast }^{5/4}}\right) +\frac{C}{N}%
\leq C\left( \int_{0}^{\frac{1}{N}}R\,dR+\frac{1}{N}\right) \\
&=&C\left( \frac{1}{N^{2}}+\frac{1}{N}\right) \rightarrow 0\text{ as }%
N\rightarrow \infty \text{.}
\end{eqnarray*}

Hence, by Lemma \ref{LGD}, the operator 
\begin{equation*}
K_{2}=\int_{\mathbb{R}^{3}\times \mathbb{R}_{+}}k_{2}(\boldsymbol{\xi },%
\boldsymbol{\xi }_{\ast },I,I_{\ast })\,h_{\ast }\,d\boldsymbol{\xi }_{\ast
}dI_{\ast }
\end{equation*}%
is compact on $L^{2}\left( d\boldsymbol{\xi \,}dI\right) $.

Concluding, the operator $K=K_{2}-K_{1}$ is a compact self-adjoint operator
on $L^{2}\left( d\boldsymbol{\xi \,\,}dI\right) $. The self-adjointness is
due to the symmetry relations $\left( \ref{sa1}\right) ,\left( \ref{sa2}%
\right) $, cf. \cite[p.198]{Yoshida-65}.
\end{proof}

\section{\label{PT2}Bounds on the collision frequency}

This section concerns the proof of Theorem \ref{Thm2}. Note that throughout
the proof, $C$ will denote a generic positive constant.

\begin{proof}
Under assumption $\left( \ref{e1}\right) $ the collision frequency $\upsilon 
$ equals%
\begin{eqnarray*}
\upsilon &=&\int_{\left( \mathbb{R}^{3}\times \mathbb{R}_{+}\right) ^{3}}%
\frac{M_{\ast }}{\left( II_{\ast }\right) ^{\delta /2-1}}W(\boldsymbol{\xi },%
\boldsymbol{\xi }_{\ast },I,I_{\ast }\left\vert \boldsymbol{\xi }^{\prime },%
\boldsymbol{\xi }_{\ast }^{\prime },I^{\prime },I_{\ast }^{\prime }\right.
)\,d\boldsymbol{\xi }_{\ast }d\boldsymbol{\xi }^{\prime }d\boldsymbol{\xi }%
_{\ast }^{\prime }dI_{\ast }dI^{\prime }dI_{\ast }^{\prime } \\
&=&C\int_{\left( \mathbb{R}^{3}\times \mathbb{R}_{+}\right) ^{3}}M_{\ast
}\sigma \frac{\left\vert \mathbf{g}\right\vert }{\left\vert \mathbf{g}%
^{\prime }\right\vert ^{2}}\delta _{3}\left( \mathbf{G}-\mathbf{G}^{\prime
}\right) \widehat{\delta }_{1}\,d\boldsymbol{\xi }_{\ast }d\mathbf{G}%
^{\prime }d\mathbf{g}^{\prime }dI_{\ast }dI^{\prime }dI_{\ast }^{\prime } \\
&=&C\int_{\mathbb{R}^{3}\times \left( \mathbb{R}_{+}\right) ^{4}}I_{\ast
}^{\delta /2-1}e^{-I_{\ast }}e^{-m\left\vert \boldsymbol{\xi }_{\ast
}\right\vert ^{2}/2}\sigma \left\vert \mathbf{g}\right\vert \widehat{\delta }%
_{1}\,d\boldsymbol{\xi }_{\ast }d\left\vert \mathbf{g}^{\prime }\right\vert
dI_{\ast }dI^{\prime }dI_{\ast }^{\prime }\int_{\mathbb{S}^{2}}\,d%
\boldsymbol{\omega } \\
&=&C\int_{\mathbb{R}^{3}\times \left( \mathbb{R}_{+}\right) ^{4}}e^{-I_{\ast
}}e^{-m\left\vert \boldsymbol{\xi }_{\ast }\right\vert ^{2}/2}\dfrac{%
\left\vert \mathbf{g}^{\prime }\right\vert \left( I_{\ast }I^{\prime
}I_{\ast }^{\prime }\right) ^{\delta /2-1}}{E^{\delta -1/2}}\widehat{\delta }%
_{1}\,d\boldsymbol{\xi }_{\ast }d\left\vert \mathbf{g}^{\prime }\right\vert
dI_{\ast }dI^{\prime }dI_{\ast }^{\prime } \\
&=&C\int_{\mathbb{R}^{3}\times \left( \mathbb{R}_{+}\right) ^{3}}e^{-I_{\ast
}}e^{-m\left\vert \boldsymbol{\xi }_{\ast }\right\vert ^{2}/2}\dfrac{\sqrt{%
E-\left( I^{\prime }+I_{\ast }^{\prime }\right) }}{E^{\delta -1/2}}\left(
I_{\ast }I^{\prime }I_{\ast }^{\prime }\right) ^{\delta /2-1} \\
&&\times \mathbf{1}_{m\left\vert \mathbf{g}\right\vert ^{2}>4\Delta I}d%
\boldsymbol{\xi }_{\ast }dI_{\ast }dI^{\prime }dI_{\ast }^{\prime }\text{,}
\end{eqnarray*}%
with 
\begin{eqnarray*}
\widehat{\delta }_{1} &=&\mathbf{1}_{m\left\vert \mathbf{g}\right\vert
^{2}>4\Delta I}\delta _{1}\left( \sqrt{\left\vert \mathbf{g}\right\vert ^{2}-%
\frac{4}{m}\Delta I}-\left\vert \mathbf{g}^{\prime }\right\vert \right) \\
&=&\mathbf{1}_{m\left\vert \mathbf{g}\right\vert ^{2}>4\Delta I}\delta
_{1}\left( \sqrt{E-\left( I^{\prime }+I_{\ast }^{\prime }\right) }%
-\left\vert \mathbf{g}^{\prime }\right\vert \right) .
\end{eqnarray*}%
Then%
\begin{eqnarray*}
\upsilon &\geq &C\int_{\mathbb{R}^{3}\times \mathbb{R}_{+}}\int_{I^{\prime
}+I_{\ast }^{\prime }\leq E/2}e^{-I_{\ast }}e^{-m\left\vert \boldsymbol{\xi }%
_{\ast }\right\vert ^{2}/2}\dfrac{\sqrt{E-\left( I^{\prime }+I_{\ast
}^{\prime }\right) }}{E^{\delta +\left( \alpha -1\right) /2}}\left( I_{\ast
}I^{\prime }I_{\ast }^{\prime }\right) ^{\delta /2-1}\, \\
&&d\boldsymbol{\xi }_{\ast }dI_{\ast }dI^{\prime }dI_{\ast }^{\prime } \\
&\geq &C\int_{\mathbb{R}^{3}\times \mathbb{R}_{+}}\dfrac{E^{1/2}e^{-m\left%
\vert \boldsymbol{\xi }_{\ast }\right\vert ^{2}/2}}{E^{\delta +\left( \alpha
-1\right) /2}}I_{\ast }^{\delta /2-1}e^{-I_{\ast }}\left(
\int_{0}^{E/4}\left( I^{\prime }\right) ^{\delta /2-1}dI^{\prime }\right)
^{2}\,d\boldsymbol{\xi }_{\ast }dI_{\ast } \\
&=&C\int_{\mathbb{R}^{3}\times \mathbb{R}_{+}}E^{1-\alpha /2}e^{-m\left\vert 
\boldsymbol{\xi }_{\ast }\right\vert ^{2}/2}I_{\ast }^{\delta
/2-1}e^{-I_{\ast }}\,d\boldsymbol{\xi }_{\ast }dI_{\ast } \\
&\geq &C\int_{\mathbb{R}^{3}}\left( \left\vert \mathbf{g}\right\vert
^{2}+I\right) ^{1-\alpha /2}e^{-m\left\vert \boldsymbol{\xi }_{\ast
}\right\vert ^{2}/2}\,d\boldsymbol{\xi }_{\ast }\int_{0}^{\infty }I_{\ast
}^{\delta /2-1}e^{-I_{\ast }}\,dI_{\ast } \\
&\geq &C\int_{\mathbb{R}^{3}}\left( \left\vert \left\vert \boldsymbol{\xi }%
\right\vert -\left\vert \boldsymbol{\xi }_{\ast }\right\vert \right\vert
^{2}+I\right) ^{1-\alpha /2}e^{-m\left\vert \boldsymbol{\xi }_{\ast
}\right\vert ^{2}/2}\,d\boldsymbol{\xi }_{\ast }\text{.}
\end{eqnarray*}

Consider two different cases separately: $\left\vert \boldsymbol{\xi }%
\right\vert \leq 1$ and $\left\vert \boldsymbol{\xi }\right\vert \geq 1$.
Firstly, if $\left\vert \boldsymbol{\xi }\right\vert \geq 1$, then%
\begin{eqnarray*}
\upsilon &\geq &C\int_{\left\vert \boldsymbol{\xi }_{\ast }\right\vert \leq
1/2}\left( \left( \left\vert \boldsymbol{\xi }\right\vert -\left\vert 
\boldsymbol{\xi }_{\ast }\right\vert \right) ^{2}+I\right) ^{1-\alpha
/2}e^{-m\left\vert \boldsymbol{\xi }_{\ast }\right\vert ^{2}/2}\,d%
\boldsymbol{\xi }_{\ast } \\
&\geq &C\left( \left\vert \boldsymbol{\xi }\right\vert ^{2}+I\right)
^{1-\alpha /2}\int_{\left\vert \boldsymbol{\xi }_{\ast }\right\vert \leq
1/2}e^{-m/8}\,d\boldsymbol{\xi }_{\ast }=C\left( \left\vert \boldsymbol{\xi }%
\right\vert ^{2}+I\right) ^{1-\alpha /2} \\
&\geq &C\left( \left\vert \boldsymbol{\xi }\right\vert +\sqrt{I}\right)
^{2-\alpha }\geq C\left( 1+\left\vert \boldsymbol{\xi }\right\vert +\sqrt{I}%
\right) ^{2-\alpha }\text{.}
\end{eqnarray*}%
Secondly, if $\left\vert \boldsymbol{\xi }\right\vert \leq 1$, then%
\begin{eqnarray*}
\upsilon &\geq &C\int_{\left\vert \boldsymbol{\xi }_{\ast }\right\vert \geq
2}\left( \left( \left\vert \boldsymbol{\xi }_{\ast }\right\vert -\left\vert 
\boldsymbol{\xi }\right\vert \right) ^{2}+I\right) ^{1-\alpha
/2}e^{-m\left\vert \boldsymbol{\xi }_{\ast }\right\vert ^{2}/2}\,d%
\boldsymbol{\xi }_{\ast } \\
&\geq &C\left( 1+I\right) ^{1-\alpha /2}\int_{\left\vert \boldsymbol{\xi }%
_{\ast }\right\vert \geq 2}e^{-m\left\vert \boldsymbol{\xi }_{\ast
}\right\vert ^{2}/2}\,d\boldsymbol{\xi }_{\ast }=C(1+I)^{1-\alpha /2} \\
&\geq &C\left( 1+\sqrt{I}\right) ^{2-\alpha }\geq C\left( 1+\left\vert 
\boldsymbol{\xi }\right\vert +\sqrt{I}\right) ^{2-\alpha }\text{.}
\end{eqnarray*}%
Hence, there is a positive constant $\upsilon _{-}>0$, such that $\upsilon
\geq \upsilon _{-}\left( 1+\left\vert \boldsymbol{\xi }\right\vert +\sqrt{I}%
\right) ^{2-\alpha }$ for all $\boldsymbol{\xi }\in \mathbb{R}^{3}$.

On the other hand, for any positive number $\varepsilon >0$%
\begin{eqnarray*}
\upsilon &\leq &C\int_{\mathbb{R}^{3}\times \left( \mathbb{R}_{+}\right)
^{3}}e^{-I_{\ast }}e^{-m\left\vert \boldsymbol{\xi }_{\ast }\right\vert
^{2}/2}\dfrac{\left( I_{\ast }I^{\prime }I_{\ast }^{\prime }\right) ^{\delta
/2-1}}{E^{\delta +\alpha /2-1}}\,d\boldsymbol{\xi }_{\ast }dI_{\ast
}dI^{\prime }dI_{\ast }^{\prime } \\
&=&C\int_{\mathbb{R}^{3}}e^{-m\left\vert \boldsymbol{\xi }_{\ast
}\right\vert ^{2}/2}\int_{0}^{\infty }I_{\ast }^{\delta /2-1}e^{-I_{\ast
}}\left( \int_{0}^{1}\dfrac{\left( I^{\prime }\right) ^{\delta /2-1}}{%
E^{\delta /2+\alpha /4-1/2}}\,dI^{\prime }\right) ^{2}dI_{\ast }d\boldsymbol{%
\xi }_{\ast } \\
&&+2C\int_{\mathbb{R}^{3}}e^{-m\left\vert \boldsymbol{\xi }_{\ast
}\right\vert ^{2}/2}\int_{0}^{\infty }I_{\ast }^{\delta /2-1}e^{-I_{\ast
}}\int_{1}^{\infty }\dfrac{\left( I_{\ast }^{\prime }\right) ^{\delta /2-1}}{%
E^{\left( \delta +\alpha \right) /2+1/4}}dI_{\ast }^{\prime } \\
&&\times \int_{0}^{1}\dfrac{\left( I^{\prime }\right) ^{\delta /2-1}}{%
E^{\delta /2-5/4}}\,dI^{\prime }dI_{\ast }d\boldsymbol{\xi }_{\ast } \\
&&+C\int_{\mathbb{R}^{3}\times \mathbb{R}_{+}}\!\!\!E^{1+\left( \varepsilon
-\alpha \right) /2}e^{-m\left\vert \boldsymbol{\xi }_{\ast }\right\vert
^{2}/2}I_{\ast }^{\delta /2-1}e^{-I_{\ast }}\left( \int_{1}^{\infty }\dfrac{%
\left( I^{\prime }\right) ^{\delta /2-1}}{E^{\delta /2+\varepsilon /4}}%
dI^{\prime }\right) ^{2}\!d\boldsymbol{\xi }_{\ast }dI_{\ast } \\
&\leq &C\int_{\mathbb{R}^{3}}e^{-m\left\vert \boldsymbol{\xi }_{\ast
}\right\vert ^{2}/2}d\boldsymbol{\xi }_{\ast }\int_{0}^{\infty }I_{\ast
}^{\delta /2-1}e^{-I_{\ast }}dI_{\ast }\left( \int_{0}^{1}\dfrac{1}{\left(
I^{\prime }\right) ^{\alpha /4+1/2}}\,dI^{\prime }\right) ^{2} \\
&&+C\!\int_{\mathbb{R}^{3}}\!e^{-m\left\vert \boldsymbol{\xi }_{\ast
}\right\vert ^{2}/2}d\boldsymbol{\xi }_{\ast }\int_{0}^{\infty }\!\!I_{\ast
}^{\delta /2-1}e^{-I_{\ast }}dI_{\ast }\int_{1}^{\infty }\!\!\dfrac{1}{%
\left( I_{\ast }^{\prime }\right) ^{\alpha /2+5/4}}dI_{\ast }^{\prime
}\int_{0}^{1}\!\!\left( I^{\prime }\right) ^{1/4}dI^{\prime } \\
&&+C\int_{\mathbb{R}^{3}\times \mathbb{R}_{+}}\!\!\!E^{1+\left( \varepsilon
-\alpha \right) /2}e^{-m\left\vert \boldsymbol{\xi }_{\ast }\right\vert
^{2}/2}I_{\ast }^{\delta /2-1}e^{-I_{\ast }}d\boldsymbol{\xi }_{\ast
}dI_{\ast }\!\left( \int_{1}^{\infty }\dfrac{1}{\left( I_{\ast }^{\prime
}\right) ^{1+\varepsilon /4}}dI_{\ast }^{\prime }\right) ^{2} \\
&\leq &C\left( 1+\int_{\mathbb{R}^{3}}\left( 1+m\frac{\left\vert \mathbf{g}%
\right\vert ^{2}}{4}+I\right) ^{1+\left( \varepsilon -\alpha \right)
/2}e^{-m\left\vert \boldsymbol{\xi }_{\ast }\right\vert ^{2}/2}d\boldsymbol{%
\xi }_{\ast }\right. \\
&&\times \left. \int_{0}^{\infty }\left( 1+I_{\ast }\right) I_{\ast
}^{\delta /2-1}e^{-I_{\ast }}dI_{\ast }\right) \\
&\leq &C\left( 1+\left( 1+\left\vert \boldsymbol{\xi }\right\vert
^{2}+I\right) ^{1+\left( \varepsilon -\alpha \right) /2}\int_{0}^{\infty
}\left( 1+r^{2}\right) ^{1+\varepsilon }r^{2}e^{-2r^{2}}dr\right) \\
&\leq &C\left( 1+\left\vert \boldsymbol{\xi }\right\vert +\sqrt{I}\right)
^{2-\alpha +\varepsilon }\text{.}
\end{eqnarray*}%
Hence, there is a positive constant $\upsilon _{+}>0$, such that $\upsilon
\leq \upsilon _{+}\!\!\left( 1+\left\vert \boldsymbol{\xi }\right\vert +%
\sqrt{I}\right) ^{2-\alpha +\varepsilon }$ for all $\boldsymbol{\xi }\in 
\mathbb{R}^{3}$.
\end{proof}

\bibliographystyle{AIMS}
\bibliography{biblo1}

\end{document}